\pgfplotsset{compat=1.12}
\theoremstyle{plain}
\newtheorem{thm}{Theorem}[section]
\newtheorem{claim}{Claim}
\newtheorem{lem}[thm]{Lemma}
\newtheorem{lemspecial}[thm]{Basic Inequality}
\newtheorem{prop}[thm]{Proposition}
\newtheorem{si}[thm]{Situation}
\newtheorem{conj}{Conjecture}
\newtheorem{cor}[thm]{Corollary}
\newtheorem{nt}[thm]{Notation}
\newtheorem{q}[thm]{Question}
\theoremstyle{definition}
\newtheorem{defn}[thm]{Definition}
\theoremstyle{remark}
\newtheorem{rem}[thm]{Remark}
\theoremstyle{definition}
\newtheorem{ex}[thm]{Example}
\newcommand{\DD}{\mathbb{D}}
\newcommand{\PP}{\mathbb{P}}
\newcommand{\QQ}{\mathbb{Q}}
\renewcommand{\SS}{\mathbb{S}}
\newcommand{\ZZ}{\mathbb{Z}}
\newcommand{\lag}{\langle}
\newcommand{\rag}{\rangle}
\newcommand{\qq}{\mathbb{Q}}
\newcommand{\la}{\lambda}
\newcommand{\aaa}{\alpha}
\newcommand{\ep}{\epsilon}
\newcommand{\ti}{\tilde}
\newcommand{\mO}{\mathscr{O}}
\newcommand{\mV}{\mathscr{V}}
\newcommand{\mU}{\mathscr{U}}
\newcommand{\mM}{\mathscr{M}}
\newcommand{\mQ}{\mathcal{Q}}
\newcommand{\cW}{\mathcal{W}}
\newcommand{\cM}{\mathcal{M}}
\newcommand{\sbs}{\subset}
\newcommand{\sg}{\sigma}
\newcommand{\mL}{\mathscr{L}}
\newcommand{\ov}{\overline}
\newcommand{\rr}{\rho_{g,k}}
\newcommand{\ot}{\otimes}
\newcommand{\cd}{\cdot}
\newcommand{\bfs}{\mathbf{s}}
\newcommand{\hh}{,\hdots,}
\newcommand{\g}{\mathfrak{g}}
\DeclareMathOperator\rk{rank}
\DeclareMathOperator\Gr{Gr}
\DeclareMathOperator\fl{Fl}
\DeclareMathOperator\im{Im}
\DeclareMathOperator\ext{Ext}
\DeclareMathOperator\Pic{Pic}
\DeclareMathOperator\sy{Sym}
\DeclareMathOperator\sgn{sgn}
\DeclareMathOperator\id{Id}
\DeclareMathOperator\h{Hom}
\DeclareMathOperator\Sec{Sec}
\let \pr=\prime
\DeclareRobustCommand{\SkipTocEntry}[5]{}
\begin{document}
\title{The Strong Maximal Rank Conjecture and higher rank Brill--Noether theory}
\author{Ethan Cotterill, Adri\'an Alonso Gonzalo, and Naizhen Zhang}
\address[Ethan Cotterill]{Instituto de Matem\'atica, Universidade Federal Fluminense, Rua Prof Waldemar de Freitas, S/N, Campus do Gragoat\'a, CEP 24.210-201, Niter\'oi, RJ, Brazil}
\email{cotterill.ethan@gmail.com}
\address[Adri\'an Alonso Gonzalo]{Department of Mathematics, Universitat Aut\`onoma de Barcelona, 08193 Bellaterra (Barcelona), Spain}
\email{adrian.alonsogonzalo@gmail.com
}
\thanks{Naizhen Zhang is supported by the DFG Priority Programme 2026 ``Geometry at infinity". During much of the preparation of this work, he was supported by the Methusalem Project Pure 
Mathematics at KU Leuven.}
\address[Naizhen Zhang]{Institut f\"ur Differentialgeometrie, Leibniz Universit\"at Hannover, Welfengarten 1, 30167 Hannover, Germany}
\email{naizhen.zhang@math.uni-hannover.de}
\maketitle
\begin{abstract}
In this paper, we compute the cohomology class of certain ``special maximal-rank loci" originally defined by Aprodu and Farkas. By showing that such classes are nonzero, we are able to verify the non-emptiness portion of the Strong Maximal Rank Conjecture in a wide range of cases. As an application, we obtain new evidence for the existence portion of a well-known conjecture due to Bertram, Feinberg and independently Mukai in higher-rank Brill--Noether theory.    
\end{abstract}

\tableofcontents
\section{Introduction}
Ever since the inception of moduli of (complex) curves as an area of investigation in its own right, {\it linear series} have served as crucial tools for probing the {\it intrinsic} geometry of the moduli space via the {\it extrinsic} properties of a (variable) curve's embeddings in projective spaces. Classically, a linear series is defined by a vector subspace $V$ of holomorphic sections of a line bundle $L$; in that case, the celebrated {\it Brill--Noether} theorem of Griffiths and Harris gives a complete description of the space of series on a curve that is general in moduli. There are many interesting variations on the basic Brill--Noether paradigm; in this paper, we will be concerned with two of these. The first concerns the (dimension of) the vector space of hypersurfaces of fixed degree containing the image of $C$ under a linear series, while the second 
involves replacing $L$ by a vector bundle of some higher rank $e \geq 2$, and produces a theory of {\it rank-$e$ linear series}.

\medskip
For every integer $m \geq 2$, the dimension of the vector space of degree-$m$ hypersurfaces in $\mathbb{P}V^{\vee}$ containing the image of a curve $C$ under $(L,V)$ is determined by the rank of an $m$-fold {\it multiplication map} $v_m:\text{Sym}^m V \to H^0(C,L^{\ot m})$. Multiplication maps for {\it general} linear series on general curves are the focus of the {\it Maximal Rank Conjecture}, or MRC, now a theorem thanks to the work of Eric Larson. A strong form of the MRC, or SMRC, addresses the dimensionality of spaces of {\it special} linear series whose multiplication maps fail to be of maximal rank. The SMRC is already very much open when $m=2$ and much of our effort in this paper will be devoted to certifying the {\it positivity} of (classes of) quadratic SMRC loci of (traditional) linear series on a general curve. These are degeneracy loci in moduli spaces of linear series, in which the quadratic multiplication map $v_2$ fails to have maximal rank. While we manage to realize our classes as alternating sums of meromorphic functions in the {\it shifted Schur functions} of Okounkov and Olshanski, certifying the positivity of these expressions is subtle in general. 


\medskip
Our main result establishes the non-emptiness of quadratic SMRC loci (modulo a couple of explicit exceptions) whenever the dimension of the target of $v_2$ is at most 6 less than that of the domain, by verifying the positivity of the corresponding SMRC classes. By an SMRC class, we mean the image of the cohomology class of an SMRC locus under the Gysin map; see Section \ref{sec:review}.   

\begin{thm}[= Propositions~\ref{prop:asymp} and \ref{prop:positivity_for_small_N}]
Assume that the dimension of the domain of the multiplication map $v_2$ is $N-1$ more than that of the target, where $N \geq 1$. The SMRC class is strictly positive when $N \leq 2$
except when either
\begin{itemize}
    \item[(i)] $N=1$, and $(g,r,d) \in \{(1,2,3), (5,3,7)\}$; or
\item[(ii)] $N=2$, and $(g,r,d) \in \{(2,3,5), (7,4,10)\}$
\end{itemize}
and the SMRC class is unconditionally strictly positive whenever $3\leq N \leq 7$. Moreover, the SMRC class is always strictly positive whenever $r\ge 12N-2$. 
\end{thm}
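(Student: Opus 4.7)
The plan is to use the expansion of the SMRC class as an alternating sum
$$[\mathrm{SMRC}] \;=\; \sum_\lambda (-1)^{|\lambda|}\, a_\lambda(g,r,d)\, s^{*}_\lambda(\mu_{g,r,d})$$
in shifted Schur functions of Okounkov and Olshanski, established earlier in the paper and alluded to in the introduction. Here the sum is over partitions $\lambda$ whose shape is constrained by $N$, and $\mu_{g,r,d}$ is an explicit tuple determined by the Brill--Noether data $(g,r,d)$. Positivity of the class then reduces to an analysis of this alternating sum, which I would carry out in three regimes matching the three conclusions of the theorem.

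For the small $N$ regime ($N \leq 2$, corresponding to Proposition~\ref{prop:positivity_for_small_N}), the sum has only a handful of terms. I would enumerate the integer tuples $(g,r,d)$ satisfying both the dimensional constraint $\binom{r+2}{2} - (2d-g+1) = N-1$ and the Brill--Noether inequality $\rho(g,r,d) \geq 0$; for $N \in \{1,2\}$ these form low-dimensional families, and the alternating sum becomes a concrete rational expression in $(g,r,d)$. Direct evaluation should show positivity except on the listed exceptional tuples, which are precisely the integer zeros of that expression; checking that they coincide with the four listed cases is a bookkeeping exercise.

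For the intermediate range $3 \leq N \leq 7$ the terms are still few, but case-by-case enumeration of families becomes awkward. Here I would instead group the alternating sum using monotonicity of $s^{*}_\lambda$: for partitions with $\mu \supset \lambda$ one has $s^{*}_\mu(\mu_{g,r,d}) \geq s^{*}_\lambda(\mu_{g,r,d})$ once $\mu_{g,r,d}$ lies in the ``positive'' regime guaranteed by $\rho \geq 0$, so I can absorb each negative term into a positive term of larger support. The goal is to reduce the problem to the manifest positivity of a single dominating shifted Schur value, one shape at a time.

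Finally, for the asymptotic regime $r \geq 12N-2$ (Proposition~\ref{prop:asymp}), I would show that a single ``leading'' shifted Schur function dominates the rest. Using a Jacobi--Trudi-type determinantal expression for $s^{*}_\lambda$ together with uniform estimates on its linear factors, the ratio of any non-leading term to the leading one is $O(r^{-1})$; summing these bounds and comparing to the leading contribution yields positivity as soon as $r$ exceeds a quantitative threshold. The main obstacle will be extracting the sharp constant $12N-2$ rather than a cruder asymptotic bound: this requires careful control of the combinatorial coefficients $a_\lambda$ and of the products of linear factors in $s^{*}_\lambda(\mu_{g,r,d})$, and will likely force a short case analysis on the shapes of $\lambda$ whose contribution competes most closely with the leading term.
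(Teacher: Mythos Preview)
Your overall framework---writing $\overline{S}(g,r,d)$ as an alternating sum $\sum_{m=0}^N(-1)^{N-m}F_{g,r,d}(m)$ built from shifted Schur values $s^*_\lambda(\epsilon(r))$---matches the paper, and your treatment of $N\le 2$ is close in spirit to what is actually done (the paper streamlines it by writing everything in terms of $A=2r+1+g-d$, $B=\binom{r+2}{2}$, and the ratio $p=B/A$, then invoking the elementary bound $p\ge 2$ of Lemma~\ref{lem:estimate}). But the other two regimes diverge from the paper in ways that matter.

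For the asymptotic bound the paper does \emph{not} isolate a single dominant term; it pairs consecutive summands, writing $\overline{S}=\sum_i\big(-F_{g,r,d}(2i-2)+F_{g,r,d}(2i-1)\big)$ (or the analogous even-$N$ pairing), and uses the branching rule $s^*_\lambda(\epsilon(r))=\big(\binom{r+2}{2}-|\lambda|\big)^{-1}\sum_{\lambda^+}s^*_{\lambda^+}(\epsilon(r))$ to rewrite each pair as a positive combination times a factor of the form $c_m(r)-(\text{something}\le 3r+1)$. The sharp threshold $r\ge 12N-2$ then drops out of the quadratic inequality $c_N(r)>3r+1$, where $c_N(r)=\tfrac{1}{2N}\big(\binom{r+2}{2}-N+1\big)$. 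A ``one leading term with $O(r^{-1})$ tails'' estimate will not recover this constant, and since consecutive $F_{g,r,d}(m)$ are of comparable size it is not clear it even yields positivity. For $3\le N\le 7$, your proposed monotonicity $s^*_\mu\ge s^*_\lambda$ for $\mu\supset\lambda$ is not generally valid (from $s^*_\lambda(\epsilon(r))=\binom{r+2}{2}_{|\lambda|}\dim(\epsilon(r)/\lambda)/\dim\epsilon(r)$ the comparison depends on ratios of skew SYT counts), and the sign alternation is by $|\lambda|$ rather than by containment, so such a pairing does not align with the alternating structure. The paper instead first applies the asymptotic bound to reduce to finitely many triples with $r<12N-2$, computes each $F_{g,r,d}(m)$ for $m\le 7$ as an explicit rational function of $A,B$ (Lemma~\ref{lem:m_small_value}), and verifies positivity of $\overline{S}$ on the remaining finitely many $(g,r,d)$ by direct evaluation in MATLAB.
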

Here positivity is measured by an intersection with a complementary power of the theta divisor.

\medskip
Quadratic SMRC loci are in fact intimately related to rank-two linear series; indeed, our original motivation for studying the SMRC is 
a celebrated conjecture of Bertram, Feinberg and Mukai (referred to hereafter as the {\it BFM conjecture}), which states the following: \begin{conj}\label{BFM}(Bertram-Feinberg-Mukai \cite{Ber,MCan})
\noindent Set $\rr:=3g-3-\binom{k+1}{2}$. On a general curve $C$ of genus $g\ge 2$, the moduli space of stable rank two vector bundles with canonical determinant and $k$ sections is non-empty, and has expected dimension $\rr$ whenever $\rr$ is non-negative. When $\rr<0$, the moduli space is empty.
\end{conj}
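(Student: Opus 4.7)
The plan is not to attempt a full proof of the BFM conjecture, which remains a major open problem, but rather to sketch how the main theorem of this paper produces new instances of its \emph{existence} portion, in keeping with what the introduction advertises. The bridge between rank-two canonical bundles and quadratic multiplication maps is Mukai's Grassmannian construction: a globally generated stable bundle $E$ with $\det E = K_C$ and $h^0(E) = k$ induces a morphism $\varphi\colon C \to \Gr(2,k)$, whose composition with the Pl\"ucker embedding $\Gr(2,k) \hookrightarrow \PP^{\binom{k}{2}-1}$ factors through a rank-one linear series $(L,V)$ with $L = K_C$ and $V \subseteq H^0(K_C)$ of dimension $\binom{k}{2}$. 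The $\binom{k}{4}$ Pl\"ucker quadrics cutting out $\Gr(2,k)$ lie in the kernel of $v_2\colon \sy^2 V \to H^0(L^{\ot 2})$, forcing a computable drop in the rank of $v_2$.

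First I would set up a precise dictionary: given $(g,k)$ with $\rr \geq 0$, determine the SMRC parameters $(g,r,d,N)$ with $r = \binom{k}{2}-1$, $d = 2g-2$, and $N$ read off from the codimension drop forced by the Pl\"ucker relations. One should verify that the BFM regime $\rr \geq 0$ corresponds exactly to the regime in which the SMRC locus has non-negative expected dimension inside the ambient moduli space of linear series. Second I would invoke the main theorem above: in the range $3 \leq N \leq 7$, or $N \leq 2$ outside the explicit exceptional list, or asymptotically whenever $r \geq 12N-2$, the SMRC class is strictly positive, and so the SMRC locus is non-empty on a general $C$.

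The main obstacle, and the step I expect to be the most delicate, is the \emph{inverse} construction: reconstructing from a point $(L,V)$ of the SMRC locus an actual stable rank-two bundle $E$ with $\det E = K_C$ and $h^0(E) = k$. Concretely, one must show that the excess kernel of $v_2$ at that point is genuinely spanned by Pl\"ucker relations, rather than by accidental quadrics containing $\varphi(C)$ along a higher-dimensional scroll, and then verify stability of the reconstructed $E$ by analyzing potential destabilizing sub-line-bundles through the structure of $V$. The standard vehicle is a Lazarsfeld--Mukai-type extension construction combined with a Koszul-cohomology argument; its success is sensitive to the exact numerical profile $(g,r,d,N)$, which in turn explains why the sporadic exceptions $(g,r,d) \in \{(1,2,3),(5,3,7),(2,3,5),(7,4,10)\}$ appearing in the main theorem correspond to genuine anomalies in the BFM picture and have to be treated separately.

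Finally, the dimension estimate and the emptiness-when-$\rr<0$ portions of BFM lie outside the reach of this approach: positivity of an SMRC class yields only existence, not an upper bound on $\dim$ and not non-existence below the Brill--Noether threshold. Addressing those parts would require an independent degeneration or limit-linear-series argument, which I would not pursue here.
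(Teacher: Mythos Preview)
Your proposal misidentifies the bridge between the SMRC and the BFM conjecture. The paper does \emph{not} use Mukai's Grassmannian construction with $L=K_C$, $d=2g-2$, $r=\binom{k}{2}-1$, and Pl\"ucker quadrics. Instead, it realizes a candidate bundle $E$ as a nontrivial extension
\[
0 \to \omega_C \otimes L^{-1} \to E \to L \to 0
\]
where $L$ is a line bundle of degree $d$ in the range $g-1 \le d \le \lfloor 3g/2\rfloor - 1$. The key point is that the assignment $e \mapsto u_e$ sending an extension class to its coboundary $H^0(L) \to H^0(L)^\vee$ is \emph{dual} to the multiplication map $\mu_L: H^0(L)^{\otimes 2} \to H^0(L^{\otimes 2})$. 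Hence, if $\mu_L$ fails to be surjective, there is a nonzero $e$ with $u_e=0$, giving $h^0(E) = h^0(L) + h^1(L) \ge k$. Stability is then automatic provided $\deg L = d^*_k$ is the \emph{minimal BN-compatible degree} (Proposition~\ref{prop:basic_case}): any destabilizing subbundle would itself satisfy the Brill--Noether constraints, contradicting minimality. So the relevant SMRC parameters are $(g,r,d)$ with $d = d^*_k$ and $r$ chosen so that $2r+1+g-d=k$; for instance $(g,k)=(14,8)$ uses $(r,d)=(5,17)$, not $(27,26)$.

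Your numerical dictionary fails even on its own terms: for $(g,k)=(14,8)$ you would need a linear subspace $V \subset H^0(K_C)$ of dimension $\binom{8}{2}=28$, but $h^0(K_C)=14$. More generally, the image of $\wedge^2 H^0(E) \to H^0(K_C)$ has dimension at most $g$, so your formula $r = \binom{k}{2}-1$ cannot be correct in the regime of interest. Finally, the sporadic exceptions $(5,3,7)$ and $(7,4,10)$ do not correspond to BFM anomalies: the associated $k$-values give $\rho_{g,k}<0$, so they lie outside the existence range altogether. The paper simply observes that these exceptions are consistent with Mumford's theorem on projective normality.
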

We shall refer to the first and second items as the  ``existence" and ``non-existence" portions of the conjecture, respectively. The non-existence portion is a theorem of Teixidor i Bigas \cite{M2}. Meanwhile, the existence portion of the conjecture has been verified for many cases: for small genera, see \cite{Ber} and \cite{MCGrass}; while for results asymptotic with respect to $(g,k)$, see \cite{M1}, \cite{LNP} and \cite{Zhang1}.  However, the general case remains very much open.

\medskip
In \cite{Zhang1}, the third author succeeded in verifying new instances of the BFM conjecture. In order to do so, he appealed to a theory of higher-rank {\it limit} linear series on reducible curves of compact type. In this paper, we instead study rank-two bundles with canonical determinant as extensions of line bundles by their Serre duals; the coboundary map in cohomology induced by the extension sequence is controlled by a quadratic multiplication map.

\medskip
With Theorem 1.1 in hand, we are able to extend the list of genera for which the existence portion of the BFM conjecture is known to hold:

\begin{thm}[= Corollary~\ref{cor:BFM_sharp}]
The existence portion of the BFM conjecture holds for $g \in \{14,17,18,19,22,26,31\}$.
\end{thm}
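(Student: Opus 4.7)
The plan is to realize each target bundle as the middle term of a short exact sequence
\[
0 \longrightarrow K \otimes L^{-1} \longrightarrow E \longrightarrow L \longrightarrow 0
\]
on a general curve $C$ of genus $g$, where $L$ arises from a $\grd$ lying in a non-empty quadratic SMRC locus. Under the identification $\ext^1(L, K L^{-1}) \cong H^0(L^{\otimes 2})^\vee$, the coboundary $\delta_e\colon H^0(L)\to H^1(KL^{-1}) \cong H^0(L)^\vee$ is the symmetric pairing $(s,t)\mapsto e\bigl(v_2(s \cdot t)\bigr)$, so $\delta_e = 0$ whenever $e$ annihilates $\im v_2$, and in that case
\[
h^0(E) = h^0(L) + h^0(K L^{-1}) = (r+1) + (r+g-d).
\]
Since $\mathcal{M}(2,K,k)\supseteq \mathcal{M}(2,K,k+1)$, it suffices to produce such an $E$ for $k = k_{\max}(g) := \max\{k : \rr \geq 0\}$; smaller values of $k$ then follow automatically. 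The problem thus reduces to exhibiting, for each $g$ in the list, a triple $(r,d)$ satisfying $(r+1)+(r+g-d) = k_{\max}(g)$, $\rho_g(r,d) \geq 0$, and $(g,r,d)$ not among the sporadic exceptions of Theorem~1.1, and then choosing $e$ in the annihilator of $\im v_2$.

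The first step is a direct case-by-case search. For each of the seven genera one checks that such a triple $(r,d)$ exists with SMRC parameter $N := \binom{r+2}{2} - (2d-g+1) + 1$ lying in the range $1 \leq N \leq 7$ covered by Theorem~1.1 (unconditionally for $N\ge 3$, and conditionally for $N\le 2$ after verifying that $(g,r,d)\notin\{(1,2,3),(5,3,7),(2,3,5),(7,4,10)\}$). A representative instance is $g=14$ with $(r,d)=(5,17)$, for which $N=1$, $\rho_g(r,d)=2$, and $k_{\max}(14) = 8$. Theorem~1.1 then certifies strict positivity of the corresponding SMRC class, so on a general $C$ of genus $g$ there is a $\grd$ whose quadratic multiplication map fails to be surjective; any non-zero $e \in \text{Ann}(\im v_2)$ produces an extension $E$ with $h^0(E) \geq k_{\max}$.

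The main obstacle is verifying that $E$ can be chosen to be \emph{stable}. The sub-line bundle $K L^{-1}\subset E$ has degree $2g-2-d$, which is strictly less than $g-1$ in every case at hand, so it is not destabilizing. A competing maximal sub-line bundle $M\subset E$ must map non-trivially into $L$; one has to bound the locus of extension classes $e$ admitting such an $M$ and show that it does not swallow the whole of $\text{Ann}(\im v_2)\setminus\{0\}$. A Lange--Narasimhan-style dimension estimate, parametrised by $\deg M$ and by $h^0$ of the corresponding quotient, should suffice once combined with the lower bound on $\dim \text{Ann}(\im v_2)$ coming from the codimension of $\im v_2$ guaranteed by Theorem~1.1. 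Granting stability of a general such $E$, non-emptiness of $\mathcal{M}(2,K,k_{\max})$ is established, and the propagation argument above settles every admissible $k$ for each of $g\in\{14,17,18,19,22,26,31\}$. The delicate bookkeeping involved in the dimension count, together with the need to avoid the sporadic exceptions at small $N$, is presumably what restricts the list of genera to the seven stated rather than a continuous range.
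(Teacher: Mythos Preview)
Your overall architecture—realizing $E$ as an extension of $L$ by $\omega L^{-1}$, identifying the coboundary with the dual of $v_2$, and invoking Theorem~1.1 to guarantee $\mathrm{Ann}(\im v_2)\neq 0$—matches the paper. The genuine gap is the stability argument, which you leave as a speculative Lange--Narasimhan dimension count. The paper does \emph{not} do a dimension count; instead it observes that if you choose $d$ to be the \emph{minimal BN-compatible degree} $d^*_k$ (Definition~\ref{defn:min_deg}), then stability of any nontrivial extension is automatic (Proposition~\ref{prop:basic_case}). The point is that any saturated sub-line bundle $F\subset E$ not factoring through $\omega L^{-1}$ yields a quotient $E/F\cong\omega F^{-1}$, whence $h^0(F)+h^1(F)\ge h^0(E)\ge k$; minimality of $d^*_k$ in the range $[g-1,\lfloor 3g/2\rfloor-1]$ then forces $\deg F<g-1$ or $F\cong L$. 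This is the missing idea, and it is also what actually restricts the list of genera: one needs the $(r,d)$ you pick to satisfy $d=d^*_k$, which is an extra constraint encoded as the first inequality in each system of Theorem~\ref{thm:solutions}. Your closing speculation about ``delicate bookkeeping in the dimension count'' is therefore off the mark.

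A second, smaller issue: not all seven genera fall under Theorem~1.1. For several of them (e.g.\ $g=18,19$) the relevant $(r,d^*_k)$ lies in the \emph{injective} range $\binom{r+2}{2}<2d-g+1$, where $v_2$ is non-surjective for dimension reasons alone and no SMRC class computation is needed; this is case~(1) of Corollary~\ref{cor:tri}. Your assertion that each of the seven cases has $1\le N\le 7$ is false as stated.
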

In fact, we prove more; namely, that in each of these instances, the stable bundle that we construct is realized by an extension of a line bundle of the minimal degree allowed by classical Brill--Noether theory. 
\addtocontents{toc}{\SkipTocEntry}
\subsection*{Roadmap}
The material following this introduction is structured as follows. In section~\ref{notations}, we list some notation related to linear series that we will use systematically throughout the entire paper. In section~\ref{sec:review}, we assemble all of the technical tools we will subsequently apply in calculating the classes of SMRC loci. We start by reviewing intersection theory on the Grassmann bundle $\mbox{Gr}(k,E)$. An important fact is that the Gysin (pushforward) morphism is induced by the {\it Lagrange--Sylvester symmetrizer} on the Chern roots of the bundle $E$; this is recalled in Lemma~\ref{gysin_map}. A key formula due to Laksov, Lascoux, and Thorup, which we recall in Theorem~\ref{thm:LLT1}, realizes the Chern polynomial of the symmetric square of a vector bundle $E$ as a linear combination of the Segre classes of $E$, whose coefficients are multiples of certain minors $d_I$ of an infinite matrix. The output of the LLT formula, in turn, turns out to be naturally related to the {\it shifted Schur functions} of Okounkov and Olshanski introduced in \cite{okounkov1997shifted}. For our SMRC class calculation, we specifically need to understand evaluations of shifted Schur functions evaluated along staircase partitions and their polynomiality properties; these are the focus of Proposition~\ref{prop:polynomiality}.

\medskip
Our exploration of the strong maximal rank conjecture starts in earnest in section~\ref{smrc}, where we explicitly describe quadratic SMRC loci $\mathcal{M}^r_d(C)$ as degeneracy loci for maps of vector bundles over the Picard variety of a general curve $C$. Subsection~\ref{injective_surjective} introduces a dichotomy between the {\it injective and surjective ranges}, depending upon how relatively large the (dimensions of the) source and target of quadratic multiplication are. Insofar as BFM loci are concerned, injective cases correspond to cases where the genus is small, while the surjective range describes the ``generic" case. 
Proposition~\ref{prop:triv_case} establishes that SMRC loci are always non-empty, and in fact contain excessively large components, in a particular regime of parameters $(g,r,d)$. We refer to these as {\it trivial instances} as they arise from the failure of the associated linear series to be very ample.

\medskip
In section~\ref{sec:computation}, we write down explicit formulae for the Gysin pushforwards to the Picard variety of the intersections of quadratic SMRC loci with complementary powers of the theta divisor. The basic shape of these formulae depends on whether the associated triple $(g,r,d)$ belongs to the injective or surjective range. 
In Lemma \ref{lem:m_small_value} of Subsection~\ref{subsec:F_g,r,d(m)_for_small_m}, we compute explicitly (the degrees of) SMRC classes using structure theorems for shifted symmetric functions in tandem with a computer program. 
Proposition~\ref{prop:asymp} establishes that for every fixed value of $N$, our SMRC class formulae are positive for all $r$ greater than an explicit cutoff function in $N$. The section culminates in Proposition~\ref{prop:positivity_for_small_N}. 

\medskip
Finally, section~\ref{sec:bfm} 
deals with the BFM conjecture and its relationship with the SMRC. 
In order to realize stable rank-two bundles with canonical determinant and prescribed numbers of sections as extensions $e$ of line bundles $L$ by their Serre duals, the crucial fact is that the map that sends an extension to its coboundary is dual to the quadratic multiplication map on sections of $L$. The other main point is that we have more control over the slope-stability of vector bundles arising from extensions in which $L$ is a minimal quotient of the resulting vector bundle. Often minimality of the quotient implies that $e$ is in the kernel of the dual of the quadratic multiplication map. If the quadratic multiplication map is surjective, any such $e$ is trivial; so stable rank-two bundles arise from (extensions of) line bundles that belong to quadratic SMRC loci.

\medskip
A nontrivial extension does not necessarily give rise to a stable bundle, however, so accordingly we develop additional tools for understanding when this happens. Proposition~\ref{prop:sec} gives a necessary geometric criterion: 
certain {\it secant divisors} to the image of $C$ under $|L|$ are obstructions to stability. On the other hand, Theorem~\ref{thm:nagata}, due to Nagata, gives (see Corollary~\ref{cor:min_quot}) 
an upper bound on the degree of the {\it minimal} quotient line bundle of a rank-two vector bundle with canonical determinant. It allows us to identify 
a critical range of possible degrees for line bundles. Proposition~\ref{prop:basic_case} establishes that a nontrivial extension of $L$ by its Serre dual gives rise to a stable bundle when its degree is at least $g$ and 
is {\it minimal} among degrees of all potentially feasible line bundles; its Corollary~\ref{cor:tri} ultimately leads to new cases of the (existence portion of the) BFM conjecture. In subsection~\ref{subsubsec: small_genera} we give a solution to the ``BFM existence problem" in the injective range via extensions, which gives an alternative to earlier work of Bertram and Feinberg. Afterwards, we establish a regime of parameters $(g,r,d)$ in which earlier results yield the existence portion of the BFM conjecture; this culminates in Corollary~\ref{cor:BFM_sharp}. We also discuss a significant case at the numerological border of the surjective range, that of $g=13, k=8$, in which our arguments are at present inconclusive.
Our Claim~\ref{BFM_g=13,k=8} establishes that the BFM conjecture holds in the $g=13, k=8$ case provided that on a general curve i) the multiplication map $\mu_2$ associated with a complete $g^5_{16}$ is always surjective; and ii) 
there exist {\it very ample} complete $g^6_{18}$ for which $\mu_2$ fails to be surjective.

\addtocontents{toc}{\SkipTocEntry}
\subsection*{Acknowledgements} We are grateful to Wouter Castryck, Renzo Cavalieri, Marc Coppens, Joe Harris, Thomas Lam, Alex Massarenti, Brian Osserman, and Richard Stanley for useful comments and conversations. Special thanks are due to Peter Newstead and Montserrat Teixidor i Bigas for the detailed corrections and suggestions they provided after reading an earlier version of this paper. Finally, we are grateful for the CNPq postdoctoral scheme that allowed the first and third authors to meet; and to the anonymous referee, who flagged several errors and whose suggestions have helped improve the organization and quality of exposition.

\section{Notation related to linear series}\label{notations}
\begin{nt}
Hereafter, $C$ will denote an irreducible smooth projective curve over an algebraically closed field $K$.
\end{nt}
\begin{nt}
$G^r_d(C)$ will denote the moduli scheme of linear series $\g^r_d$ over $C$.  
\end{nt}
\begin{nt}
$\mL$ will denote a Poincar\'e line bundle over $C\times\Pic^d(C)$. 
\end{nt}
Fix an effective, reduced divisor $D$ of $C$ of degree $\ge 2g-1-d$, and let  $D^{\pr}=D\times \Pic^d(C)$. The space $G^r_d(C)$ is naturally a closed subscheme of $\Gr(r+1,\pi_{2*}\mL(D^{\pr}))$. Indeed, it is the zero locus of the morphism 
\[\mU\hookrightarrow p^*\pi_{2*}\mL(D^{\pr})\to p^*\pi_{2*}(\mL(D^{\pr})|_{D^{\pr}})\] where $\mU$ is the tautological subbundle, $p:\Gr(r+1,\pi_{2*}\mL(D^{\pr}))\to \Pic^d(C)$ is the structure morphism and $\pi_2: C\times\Pic^d(C)\to \Pic^d(C)$ is the second projection. 
\begin{nt}
Let $i:G^r_d(C)\to\Gr(r+1,\pi_{2*}\mL(D^{\pr}))$ denote the corresponding closed immersion.
\end{nt}
\begin{nt}
Let $p^{\pr}:G^r_d(C)\to \Pic^d(C)$ 
denote the forgetful map $(L,V)\mapsto [L]$, and let $\pi^{\pr}_2:C\times G^r_d(C)\to G^r_d(C)$ denote the second projection morphism in this case.  
\end{nt}
\begin{nt}
Let $\mM$ denote the pull-back of $\mL$ to $C\times G^r_d(C)$ along $\id_C\times p^{\pr}$.
\end{nt}
\begin{nt}
Let $\mV$ denote the universal family over $G^r_d(C)$, i.e. the pull-back of $\mU$ to $G^r_d(C)$ along $i$.
\end{nt}
For the sake of convenience, we summarize the maps and spaces mentioned above in one commutative diagram:
\[\begin{tikzcd}
C\times G^r_d(C)\ar[r,"\id_C\times p^{\pr}"]\ar[d,"\pi^{\pr}_2"] &C\times\Pic^d(C)\ar[d,"\pi_2"]\\ G^r_d(C)\ar[rd,"i"]\ar[r,"p'"]&\Pic^d(C)\\&\Gr(r+1,\pi_{2*}\mL(D^{\pr}))\ar[u,"p"]
\end{tikzcd}\]  
\begin{nt}\label{symbols}
In numerical examples, $g$ will always denote the genus of the underlying curve, $r$ the (projective) dimension of a rank-one linear series, $d$ the degree of a line bundle, $\chi$ the Euler characteristic of a line bundle, $k$ the dimension of a rank 2 linear series, and
\[\rho=\rho(g,r,d):=g-(r+1)(r+g-d).\]
\end{nt}
\begin{nt}\label{nt:rho2}
Given non-negative integers $g,r,d$, we let 
\[D(g,r,d):=\rho(g,r,d)-1-\bigg|\binom{r+2}{2}-(1-g+2d)\bigg|.\]
When $\binom{r+2}{2}\ge 1-g+2d$, we also set 
\[N(g,r,d):=\binom{r+2}{2}-2d+g.\]
\end{nt}
\begin{nt}\label{ab-jac}
Fix a reference point $P_0$ on $C$. Let $w_{j}$ denote the corresponding class of $W_{g-j}$ for $j=1,\hdots,g$, given by the image of the map $u_j:\sy^jC\to J(C):Z\mapsto Z-jP_0$. It is a codimension-$(g-j)$ class. 
\end{nt}
In order to describe the Chern classes of the Poincar\'e line bundle, we single out certain cohomology classes of $C\times\Pic^d(C)$.
\begin{nt}\label{classes}
Let $\theta$ denote the class of the pull-back of the theta divisor, and let $\eta$ denote the pullback of the class of a point on $C$.
\end{nt}

\section{Intersection theory on Grassmann bundles, and (shifted) Schur functions}\label{sec:review}
In this section, we assemble all of the main ingredients required for our study of SMRC loci. We first review some well-known facts about intersection theory on Grassmann bundles.


\subsection{The Chow ring of a Grassmann bundle and the Gysin morphism}\label{grassmann_bundles}
Let $E$ be a locally-free sheaf of rank $n$ on a smooth projective variety $X$, and let $q:\Gr(k,E)\to X$ denote the natural morphism. The following structure theorem for the Chow ring of a Grassmann bundle is well-known.
\begin{thm}[\cite{Int}, 14.6.6]\label{Grass_bundle}
Let $\mU$ and $\mathcal{Q}$ denote the tautological subbundle and tautological quotient bundle of $q^{\ast}E$, respectively. The Chow ring $A(\Gr(k,E))$ is an algebra over $A(X)$ generated by the tautological classes
\[c_1(\mU),\hdots,c_k(\mU); \text{ and } c_1(\mathcal{Q}),\hdots,c_{n-k}(\mathcal{Q})\] modulo the relations $\sum_{i=0}^k c_i(\mU)\cd c_{k-i}(\mathcal{Q})=c_k(E)$. 
\end{thm}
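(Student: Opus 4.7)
The plan is to split the claim into two parts: establishing that the stated relations hold, and then showing that no further relations are needed while the listed classes do generate $A(\Gr(k,E))$ as an $A(X)$-algebra. For the relations, the tautological short exact sequence $0 \to \mU \to q^*E \to \mQ \to 0$ on $\Gr(k,E)$ yields via the Whitney sum formula the multiplicative identity $c(\mU) \cdot c(\mQ) = q^*c(E)$; extracting the degree-$m$ piece gives $\sum_{i+j=m} c_i(\mU) c_j(\mQ) = q^*c_m(E)$ for each $m$, of which the stated degree-$k$ identity is a special case. (One in fact needs the full family of relations at degrees $m = 1,\ldots,n$ to present the entire Chow ring; I'll take this to be what is implicit.)

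For generation and the module structure, I would induct on $k$ using the partial flag bundle $\pi: Y := \fl(1,k,E) \to X$ parametrizing a line contained in a rank-$k$ subspace of $E$. There are two natural projections. The first, $\alpha : Y \to \Gr(k,E)$, realized as the projectivization $\PP(\mU)$ of the tautological subbundle, is a $\PP^{k-1}$-bundle. The second, $\beta : Y \to \PP(E)$, identifies $Y$ with the relative Grassmann bundle $\Gr(k-1, \widetilde{E})$ over $\PP(E)$, where $\widetilde{E}$ is the rank-$(n-1)$ quotient of the pullback of $E$ by its tautological line subbundle on $\PP(E)$. Applying the projective bundle formula to $\alpha$ describes $A(Y)$ as a polynomial ring over $A(\Gr(k,E))$ modulo a single explicit relation. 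Applying the inductive hypothesis to $\beta$ describes $A(Y)$ as generated over $A(\PP(E))$ by the tautological Chern classes of the $\Gr(k-1,\widetilde{E})$-bundle modulo Whitney-sum-type relations. Comparing the two descriptions of $A(Y)$, and using the projective bundle formula once more to eliminate $A(\PP(E))$ in favor of $A(X)$ and $c_1(\mO_{\PP(E)}(1))$, extracts the desired presentation for $A(\Gr(k,E))$.

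The main obstacle is the bookkeeping in the comparison of the two descriptions of $A(Y)$: one must carefully track which Chern classes of the various tautological bundles on $Y$ descend to $\Gr(k,E)$ versus to $\PP(E)$, and must express the Chern classes of $\widetilde{E}$ in terms of those of $E$ and of $c_1(\mO_{\PP(E)}(1))$. A conceptually cleaner alternative, which sidesteps the induction entirely, is a Leray-Hirsch argument: since $E$ is Zariski-locally trivial, $\Gr(k,E) \to X$ is Zariski-locally a product with fiber the classical Grassmannian $\Gr(k,n)$, and the Schubert polynomials in $c_i(\mU)$ and $c_j(\mQ)$ restrict fiberwise to the usual $\ZZ$-basis of $A(\Gr(k,n))$. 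Standard Chow-theoretic descent then forces these polynomials to form a free $A(X)$-module basis of $A(\Gr(k,E))$ of rank $\binom{n}{k}$; a rank count against the polynomial algebra modulo the Whitney sum relations then pins down the presentation and certifies that no further relations can occur.
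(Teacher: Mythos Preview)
The paper does not prove this statement; it is quoted verbatim as a known structure theorem from Fulton's \emph{Intersection Theory} (14.6.6) and invoked without argument. So there is no ``paper's own proof'' to compare against.

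That said, your sketch is a sound outline of the standard approaches. The Whitney sum observation is exactly right, and you correctly flag that the single displayed relation at degree $k$ cannot by itself present the ring: one needs the full family $\sum_{i+j=m} c_i(\mU)c_j(\mQ)=q^*c_m(E)$ for $m=1,\dots,n$, and the paper's phrasing should be read with $k$ as a running index rather than the fixed rank. Both your inductive partial-flag route and the Leray--Hirsch/basis-restriction route are classical and either would work; the latter is closer in spirit to Fulton's own treatment, which establishes a free $A(X)$-module basis of $A(\Gr(k,E))$ via Schubert classes and then reads off the algebra presentation. The inductive route is honest but, as you note, the bookkeeping in matching the two descriptions of $A(Y)$ is where all the actual labor lives; if you were to write this up in full, the Leray--Hirsch version would be shorter and cleaner.
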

Let $a_1,\hdots,a_k$ and $a_{k+1},\hdots,a_{n}$ denote the Chern roots of $\mU$ and $\mQ$, respectively. The Chern classes $c_i(\mU)$ and $c_j(\mQ)$ are elementary symmetric functions $e_i(a_1,\hdots,a_k)$ and $e_j(a_{k+1},\hdots,a_n)$. Consequently, if we think of $A(X)$ as a subring of $A(\Gr(k,E))$, $c_k(E)$ is a polynomial in $a_1,\hdots,a_{n}$ which is symmetric in $a_1,\hdots,a_k$ and in $a_{k+1},\hdots,a_{n}$ separately. The upshot is that we may express any intersection product involving $c_i(\mU)$, $c_j(\mQ)$ and $c_k(E)$ as a product of symmetric functions in the Chern roots of $\mU$ and $\mQ$. 
We will put this observation to work in writing down the Gysin map $q_{\ast}:A(\Gr(k,E))\to A(X)$. But first we recall another well-known fact, which we will also use.
\begin{thm}[\cite{gro_chern}, Theorem 3.1]\label{thm:flag}
Let $\fl(E)$ denote the complete flag bundle associated to a vector bundle $E$ of rank $n$ and $a_1,\hdots,a_n$ be the Chern roots of $E$. The Chow ring $A(\fl(E))$ is an $A(X)$-algebra, generated by the elements $a_1,\hdots,a_n$ modulo the relations
\[e_i(a_1,\hdots,a_n)=c_i(E), i=1,2,\hdots,n.\]
\end{thm}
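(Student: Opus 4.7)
The plan is to construct $\fl(E) \to X$ inductively as a tower of projective bundles and then iterate Grothendieck's projective bundle formula at each stage. Set $Y_0 := X$, $E_0 := E$, and for $i = 1, \ldots, n-1$ let $\pi_i \colon Y_i := \PP(E_{i-1}) \to Y_{i-1}$ denote the projective bundle of lines in $E_{i-1}$, equipped with the tautological exact sequence $0 \to L_i \to \pi_i^{\ast} E_{i-1} \to E_i \to 0$ in which $E_i$ has rank $n - i$. Since $E_{n-1}$ is already a line bundle, $Y_{n-1}$ is canonically identified with $\fl(E)$. Pulling the line bundles $L_i$ back to $\fl(E)$, we obtain classes $a_1, \ldots, a_{n-1}$, with the remaining $a_n$ recorded as $c_1(E_{n-1})$.

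Iterating the projective bundle formula shows that $A(Y_i)$ is a free $A(Y_{i-1})$-module of rank $n - i + 1$ with basis $1, a_i, a_i^2, \ldots, a_i^{n-i}$, subject to a single polynomial relation whose coefficients are the Chern classes of $E_{i-1}$ pulled back from $A(Y_{i-1})$. Composing these presentations, $A(\fl(E))$ is an $A(X)$-algebra generated by $a_1, \ldots, a_{n-1}$ (and hence, redundantly, by $a_1, \ldots, a_n$), and is a free $A(X)$-module on the staircase monomial basis $\{a_1^{j_1} \cdots a_{n-1}^{j_{n-1}} : 0 \leq j_i \leq n - i\}$ of total rank $n!$.

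To identify the relations I would invoke the splitting principle applied to the filtration of $\pi^{\ast} E$ induced by the tower, where $\pi \colon \fl(E) \to X$ denotes the structure morphism: by construction this filtration has line bundle subquotients whose first Chern classes are (up to a consistent sign convention) $a_1, \ldots, a_n$, so the Whitney sum formula yields $c(E) = \prod_{i=1}^n (1 + a_i)$ in $A(\fl(E))$. Equating graded pieces recovers the desired relations $c_i(E) = e_i(a_1, \ldots, a_n)$ for every $i$.

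The main obstacle is verifying that no further relations among the $a_i$ hold over $A(X)$. I would settle this by a rank comparison: the polynomial ring $A(X)[a_1, \ldots, a_n]$ is free over its subring $A(X)[e_1, \ldots, e_n]$ of $S_n$-invariants, with the same staircase monomials as a basis (the classical theorem of Artin), so the quotient $A(X)[a_1, \ldots, a_n]/(e_i(a) - c_i(E))_{i=1}^n$ is itself a free $A(X)$-module of rank $n!$. Comparing this basis with the $A(X)$-basis of $A(\fl(E))$ produced by the projective bundle tower, the natural surjection of $A(X)$-algebras from the quotient onto $A(\fl(E))$ is forced to be an isomorphism, completing the proof.
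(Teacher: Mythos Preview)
Your argument is correct and is the standard proof: build $\fl(E)$ as a tower of projective bundles, iterate Grothendieck's projective bundle formula to get a free $A(X)$-module of rank $n!$ with the staircase monomial basis, read off the relations $e_i(a)=c_i(E)$ from the Whitney sum formula applied to the tautological filtration, and then use Artin's theorem that $\ZZ[a_1,\dots,a_n]$ is free of rank $n!$ over its $S_n$-invariants to see that the evident surjection from $A(X)[a_1,\dots,a_n]/(e_i(a)-c_i(E))$ to $A(\fl(E))$ is an isomorphism of free $A(X)$-modules of the same rank.

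There is nothing to compare against, however: the paper does not supply its own proof of this theorem. It is quoted as a known result from \cite{gro_chern} and used as background for the discussion of Chern roots and the Gysin map. So your write-up stands on its own as a self-contained justification of a statement the authors take for granted.
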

\begin{lem}[\cite{pragacz1988enumerative}, Lemma 2.5]\label{gysin_map}
The Gysin morphism $q_*:A(\Gr(k,E))\to A(X)$ is induced by the map 
\[p:\ZZ[a_1,\hdots,a_n]^{\SS_k\times\SS_{n-k}}\to \ZZ[a_1,\hdots,a_n]^{\SS_n}:f(a_1,\hdots,a_n)\mapsto \sum_{\sigma\in\SS_n/\SS_{k}\times\SS_{n-k}}\sigma \bigg(\frac{f(a_1,\hdots,a_n)}{\prod_{\substack{1\le i\le k\\k+1\le j\le n}}(a_j-a_i)}\bigg)\]
where $\sigma$ acts on a polynomial by permuting the indices of the variables.
\end{lem}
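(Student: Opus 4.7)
The plan is to deduce the formula from the classical Jacobi symmetrizer formula for the full flag bundle by factoring $q$ through it. Consider the tower
\[ \text{Fl}(E) \xrightarrow{\pi} \text{Gr}(k, E) \xrightarrow{q} X,\]
so that $q_* = (q\circ \pi)_* \circ (\pi_*)^{-1}$ in an appropriate sense. By Theorem~\ref{thm:flag}, any class on $\text{Fl}(E)$ is represented by a polynomial in the Chern roots $a_1, \ldots, a_n$, and iterated projective-bundle pushforwards give the well-known Jacobi formula
\[ (q\circ \pi)_*(F) = \sum_{w \in \mathbb{S}_n} w\!\left(\frac{F}{\prod_{i<j}(a_j - a_i)}\right).\]
Similarly, $\pi$ is a product of two relative full flag bundles (over the tautological subbundle $\mathcal{U}$ and quotient $\mathcal{Q}$), so $\pi_*$ is given by the same kind of symmetrizer but with $\mathbb{S}_n$ replaced by $\mathbb{S}_k \times \mathbb{S}_{n-k}$ and the Vandermonde replaced by $\Delta_k \Delta_{n-k}$, where $\Delta_k := \prod_{i<j\le k}(a_j - a_i)$ and $\Delta_{n-k} := \prod_{k<i<j}(a_j - a_i)$.

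Next, given a class $\alpha \in A(\text{Gr}(k,E))$ represented by $f \in \mathbb{Z}[a_1, \ldots, a_n]^{\mathbb{S}_k \times \mathbb{S}_{n-k}}$, I would construct a lift $\widetilde{F} \in A(\text{Fl}(E))_{\mathbb{Q}}$ satisfying $\pi_*(\widetilde{F}) = f$. A convenient choice is
\[\widetilde{F} \;=\; \frac{f \cdot \Delta_k \Delta_{n-k}}{k!\,(n-k)!};\]
indeed, applying the partial Jacobi formula for $\pi_*$ and using that $f$ is already $\mathbb{S}_k\times\mathbb{S}_{n-k}$-invariant collapses the $k!(n-k)!$ identical terms and returns $f$. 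Then $q_*(\alpha) = (q\circ\pi)_*(\widetilde{F})$. Feeding this into the full Jacobi formula and using the factorization $\prod_{i<j}(a_j - a_i) = \Delta_k \Delta_{n-k} \cdot \Delta_{\times}$ with $\Delta_\times := \prod_{i\le k<j}(a_j-a_i)$, the $\Delta_k \Delta_{n-k}$ factors cancel and give
\[q_*(\alpha) \;=\; \frac{1}{k!\,(n-k)!}\sum_{w\in\mathbb{S}_n} w\!\left(\frac{f}{\Delta_\times}\right).\]
Finally, both $f$ and $\Delta_\times$ are invariant under the subgroup $\mathbb{S}_k\times\mathbb{S}_{n-k}$ (the latter because a permutation internal to either block sends each cross-pair to another cross-pair without swapping the two indices), so the summand is $\mathbb{S}_k\times\mathbb{S}_{n-k}$-invariant. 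Grouping the sum over $\mathbb{S}_n$ by cosets contributes a factor of $k!(n-k)!$, which cancels the prefactor and yields exactly $p(f)$.

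The main obstacle, and the point that needs care, is the apparent fractional denominators: the intermediate lift $\widetilde{F}$ is only naturally defined with $\mathbb{Q}$-coefficients, and the expressions $f/\Delta_\times$ and $f/\Delta$ have poles along the diagonals. Two remarks reconcile this. Geometrically, $q_*$ is integrally defined, so integrality of the final sum is automatic once the formula is established over $\mathbb{Q}$ (alternatively, one can replace $\widetilde{F}$ by the product of top Schubert classes on the two flag factors, which is an integral class pushing forward to $1$). Algebraically, one needs to know that the sum $\sum_w \mathrm{sgn}(w)\,w(f)$ is antisymmetric, hence divisible by $\prod_{i<j}(a_j - a_i)$, and this divisibility — combined with the fact that the quotient of the full Vandermonde by $\Delta_k\Delta_{n-k}$ is precisely $\Delta_\times$ — guarantees that the output of $p$ is a genuine element of $\mathbb{Z}[a_1,\ldots,a_n]^{\mathbb{S}_n}$, i.e.\ a bona fide class in $A(X)$.
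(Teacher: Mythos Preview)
The paper does not actually prove this lemma; it is quoted directly from \cite{pragacz1988enumerative} with only a remark about sign conventions and no argument supplied. So there is no ``paper's own proof'' to compare against.

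Your argument is a correct and standard derivation. Factoring through the full flag bundle, invoking the Jacobi symmetrizer for $(q\circ\pi)_*$, lifting $f$ via $\widetilde F = f\,\Delta_k\Delta_{n-k}/\bigl(k!(n-k)!\bigr)$, and then collapsing the sum over $\SS_n$ into a sum over cosets all go through exactly as you describe. Two minor remarks on presentation: writing $q_* = (q\circ\pi)_*\circ(\pi_*)^{-1}$ is loose, since $\pi_*$ is not invertible; what you actually use (and use correctly) is functoriality $q_*\pi_* = (q\circ\pi)_*$ applied to a chosen preimage $\widetilde F$ of $f$. Second, your parenthetical about replacing $\widetilde F$ by the product of top Schubert classes is the cleaner way to avoid the rational-coefficient detour entirely, and you might promote it from an aside to the main construction if you want the argument to stay in integral Chow throughout.
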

\begin{rem}
The map $p$ is known as the \textit{Lagrange-Sylvester symmetrizer} (see also \cite{tu2017computing}). The attentive reader will notice that the the denominator on the right-hand side differs from that in the original formula of Pragacz by a power of $(-1)$. This disparity is explained by the fact that Pragacz stated the formula for the Gysin map with domain a Grassmannian of rank-$k$ quotient bundles, while our formula applies to the Gysin map whose domain is a Grassmannian of rank-$k$ sub-bundles. See also \cite[\S 4.1]{fulton2006schubert} for further discussion. 
\end{rem}

\subsection{Schur functions} 
Schur functions in $n$ variables are symmetric functions labeled by partitions of length at most $n$. Two equivalent conventions for Schur functions appear in the literature and are convenient for different purposes. We introduce both of them here and comment on their equivalence. 

\begin{defn}\label{defn:partition}
A \textit{partition} of length $n$ is a finite sequence $\mu=(\mu_{1},\hdots,\mu_{n})$ of non-negative integers arranged in non-increasing order. The \textit{conjugate} of a partition is a partition whose corresponding Young diagram is obtained from the original diagram by interchanging rows and columns.
\end{defn}

\begin{defn}\label{defn:Schur1}
Let $\la=(\la_1,\dots,\la_n)$ be a partition of length $n$. The Schur function $s_{\la}(x_1,\hdots,x_n)$ is the symmetric polynomial
\[\bfs_{\la}(x_1,\hdots,x_n)=\frac{\det(x_j^{\la_i+n-i})}{\prod_{1\le j<k\le n}(x_j-x_k)}.\]
\end{defn}
\begin{defn}\label{defn:Schur2}
Let $I=(i_1<\hdots<i_n)$ be a strictly increasing sequence of non-negative integers. Now set 
\[s_I(x_1,\hdots,x_n):=\det(s_{i_{\ell}-k+1})_{1\le k,\ell\le n}\]
where $s_j$ is the $j$-th coefficient in the formal expansion \[\prod_{i=1}^n\frac{1}{1-x_it}=\sum_{j=0}^{\infty}s_j(x_1,\hdots,x_n)t^j.\]
\end{defn}
\begin{defn}\label{defn:seq}
For $I=(i_1<\hdots<i_n)$, we call $n$ the \textit{length} of $I$ and denote $\ell(I)$. We also write $|I|=\sum_{k=1}^ni_k$. 
\end{defn}
\begin{nt}\label{nt:partition_seq}
For any given partition $\la$ of length $n$, and any given integer $m\ge n$, there is a unique strictly-increasing $m$-term sequence $(i_1\hh i_m)$ defined by
\[i_k=\la_{m-k+1}+k-1,\qquad\text{ for }k=1\hh m\]
in which we set $\la_k=0$ whenever $k>n$. We denote this sequence by $I_m(\la)$. Conversely, given a strictly-increasing sequence $I=(i_1\hh i_n)$ of non-negative integers, let
\[\la(I)=(i_n-n+1\hh i_{n-k+1}-n+k\hh i_1)\]
denote the corresponding partition of length at most $n$.
\end{nt}
Inasmuch as there is a bijection between strictly-increasing sequences of positive integers and partitions, definitions~\ref{defn:Schur1} and \ref{defn:Schur2} need to be reconciled. The Jacobi--Trudi lemma does the trick.

\begin{lem}[Lemma A.9.3 in \cite{Int}]\label{lem:JT}
Let $I=(i_1\hh i_n)$ be a strictly-increasing sequence of non-negative integers. We have 
\[\bfs_{\la(I)}(x_1,\hdots,x_n)=\det(s_{\la_k+\ell-k})_{1\le k,\ell\le n}=s_I(x_1,\hdots,x_n)\]
where $\la_1\hh\la_n$ are the parts of $\la(I)$.
\end{lem}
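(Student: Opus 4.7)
The lemma packages two distinct claims: the rightmost equality $\det(s_{\lambda_k+\ell-k}) = s_I$ is a combinatorial unpacking of the definition of $s_I$, while the leftmost equality $\bfs_{\lambda(I)} = \det(s_{\lambda_k+\ell-k})$ is the substantive content — the classical Jacobi--Trudi identity relating the bialternant and Jacobi--Trudi presentations of the Schur polynomial. The plan is to handle the two equalities in turn.

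For the right-hand equality, I would unfold the relation $i_{n-k+1} = \lambda_k + n - k$ from Notation~\ref{nt:partition_seq}, which rearranges to $i_\ell = \lambda_{n-\ell+1} + \ell - 1$. Substituting into the defining determinant of $s_I$ from Definition~\ref{defn:Schur2} shows that its $(k,\ell)$-entry is $s_{\lambda_{n-\ell+1}+\ell-k}$. Reversing the column ordering ($\ell \mapsto n-\ell+1$) together with reversing the row ordering produces the matrix whose $(k,\ell)$-entry is $s_{\lambda_\ell + k - \ell}$; the two reversals introduce equal sign changes and cancel, and a transposition then gives precisely $(s_{\lambda_k+\ell-k})_{k,\ell}$.

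For the Jacobi--Trudi identity itself, I would set $m_i := \lambda_i + n - i$ so that Definition~\ref{defn:Schur1} reads $\bfs_{\lambda(I)} \cdot V = \det(x_j^{m_i})$ with $V$ the Vandermonde. The starting point is the generating-function identity $\prod_i(1-x_i t)^{-1} = \sum_j s_j(x_1,\dots,x_n) t^j$ from Definition~\ref{defn:Schur2}. Combined with the elementary identity $\prod_{i \neq j}(1-x_i t) = \sum_{\ell = 0}^{n-1}(-1)^\ell e_\ell^{(j)} t^\ell$, where $e_\ell^{(j)}$ denotes the elementary symmetric polynomials in $\{x_1,\dots,x_n\} \setminus \{x_j\}$, comparing coefficients of $t^m$ yields
\[x_j^m = \sum_{\ell = 0}^{n-1}(-1)^\ell e_\ell^{(j)} \cdot s_{m-\ell} \quad \text{for every } m \ge 0.\]
Applied to $m = m_1,\dots,m_n$, this produces a matrix factorization
\[\bigl(x_j^{m_i}\bigr)_{i,j} \;=\; \bigl(s_{m_i-\ell}\bigr)_{i,\ell} \cdot \bigl((-1)^\ell e_\ell^{(j)}\bigr)_{\ell,j},\]
with $\ell$ ranging over $0,\dots,n-1$. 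It then suffices to identify $\det\bigl((-1)^\ell e_\ell^{(j)}\bigr)$ with a signed multiple of $V$: pairing against the reverse-Vandermonde matrix $(x_k^{n-1-\ell})$ and using $\prod_{i\neq j}(x_k - x_i) = \sum_\ell (-1)^\ell e_\ell^{(j)} x_k^{n-1-\ell}$ collapses the product to a diagonal matrix with entries $\prod_{i \neq j}(x_j - x_i)$, from which $\det((-1)^\ell e_\ell^{(j)}) = \pm V$ is immediate. Dividing through by $V$, the left side becomes $\bfs_{\lambda(I)}$ and the right side becomes $\det(s_{m_i - \ell}) = \det(s_{\lambda_i + (\ell+1) - i})$; reindexing $\ell+1 \mapsto \ell$ to run from $1$ to $n$ delivers the desired $\det(s_{\lambda_k + \ell - k})$.

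The proof is classical and almost entirely mechanical, so the main obstacle is bookkeeping — specifically, tracking the signs introduced by the $(-1)^\ell$ factors, by column reversal of the Vandermonde (contributing $(-1)^{n(n-1)/2}$), and by the row/column reversals in the reindexing step. Since each of these contributions enters as a common multiplicative factor on both sides of each equality, they cancel in the final identity, but one must verify this cleanly to be confident that the two sign conventions — Fulton's bialternant in Definition~\ref{defn:Schur1} and the Jacobi--Trudi expansion in Definition~\ref{defn:Schur2} — are compatible end-to-end.
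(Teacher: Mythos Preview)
The paper does not supply its own proof of this lemma: it is stated as a citation (Lemma A.9.3 of \cite{Int}) and used as a black box, so there is no argument in the paper to compare against. Your write-up is the standard proof of Jacobi--Trudi via the matrix factorisation $\bigl(x_j^{m_i}\bigr) = \bigl(s_{m_i-\ell}\bigr)\cdot\bigl((-1)^\ell e_\ell^{(j)}\bigr)$, and the reindexing argument you give for the right-hand equality is correct (row and column reversal each contribute $(-1)^{\binom{n}{2}}$ and cancel, leaving the transpose). The only thing to clean up is the final sign check you flag yourself: once you compute $\det\bigl((-1)^\ell e_\ell^{(j)}\bigr)_{\ell,j}$ explicitly as $(-1)^{\binom{n}{2}}\cdot V$ and note that $\det(x_k^{n-1-\ell})_{k,\ell} = (-1)^{\binom{n}{2}} V$ as well, the two signs cancel and the identity holds on the nose rather than up to sign.
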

\begin{rem}\label{rem:Schur_val}
Note that $\deg(s_I(x_1,\hdots,x_n))=\sum_{k=1}^n i_k-\binom{n}{2}$, so that $s_{0,1,\hdots,n-1}(x_1,\hdots,x_n)=1$.
\end{rem}
\begin{defn}\label{defn:general}
More generally, given any finite non-necessarily-increasing sequence of non-negative integers  $I=(i_1,\hdots,i_n)$, we set
\[s_I(x_1,\hdots,x_n):=\frac{\det(x_{\ell}^{i_{n-k+1}})_{1\le k,\ell\le n}}{\prod_{1\le i<j\le n}(x_i-x_j)}.\] 
The fact that $s_I(x_1,\hdots,x_n)$ agrees with Definition~\ref{defn:Schur2} whenever $I$ is a strictly increasing sequence follows from Lemma~\ref{lem:JT}.
\end{defn}

\begin{nt}\label{nt:Schur_vb}
Given a rank-$n$ vector bundle $E$ with Chern roots $a_1,\hdots,a_n$, we will follow the convention in \cite[\S 14.5]{Int} and refer to $s_i(a_1,\hdots,a_n)$ as the $i$-th Segre class of $E^{\vee}$, also written as $s_i(-E)$. We set $s_I(E):=s_I(a_1,\hdots,a_n)$. Notice that the Jacobi-Trudi lemma expresses $s_I(E)$ as a determinant in terms of the classes $s_i(-E)$, as opposed to $s_i(E)$. 
\end{nt}
\begin{nt}\label{nt:Porteous}
Fix a positive integer $n$. Let $\SS=\{s_0,s_1,s_2,\hdots\}$ be a set of elements in some commutative ring and let $\mu$ denote a partition of length at most $n$. We set 
\[\Delta_{\mu}(\SS):=\det([s_{\mu_i+j-i}]_{1\le i,j\le n]}).\]
\end{nt}

\subsection{Combinatorial properties of the Lagrange-Sylvester symmetrizer}
We next review some well-known properties of the Lagrange-Sylvester symmetrizer that we will use. The first is a combinatorial formula that describes the action of $p$ on Schur functions. For a reference, see \cite{lascoux1988interpolation}.
\begin{lem}\label{lem:formula}
Given two sequences of non-negative integers $I=(i_1<\hdots<i_k)$ and $J=(j_{k+1}<\hdots<j_n)$, we have
\[p(s_I(a_1,\hdots,a_k)s_J(a_{k+1},\hdots,a_n))=(-1)^{k(n-k)}s_{J,I}(a_1,\hdots,a_n)\]
where $J,I$ denotes the concatenation of $J$ and $I$. 
\end{lem}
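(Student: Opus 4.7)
The plan is to reduce both sides of the claimed identity to explicit alternants and match them via a Laplace expansion. Using Definition~\ref{defn:general} to write each Schur function as an alternant over a Vandermonde, I would first express
\[
s_I(a_1,\dots,a_k)\,s_J(a_{k+1},\dots,a_n) \;=\; \frac{M_I(a_{[k]})\,M_J(a_{[k+1,n]})}{V(a_{[k]})\,V(a_{[k+1,n]})},
\]
where $M_I$ and $M_J$ are the corresponding numerator determinants and $V(a_T)$ denotes the Vandermonde on the indicated set of variables. The crucial observation is that the denominator $\prod_{1\le i\le k<j\le n}(a_j-a_i)$ of the Lagrange--Sylvester symmetrizer, together with the two partial Vandermondes $V(a_{[k]})$ and $V(a_{[k+1,n]})$, combines to give $(-1)^{k(n-k)}$ times the full Vandermonde $V(a_1,\dots,a_n)$, because switching $(a_j-a_i)$ to $(a_i-a_j)$ on the $k(n-k)$ cross-block factors introduces that sign.

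Next, I would parametrize coset representatives for $\SS_n/(\SS_k\times\SS_{n-k})$ by $k$-element subsets $S\subseteq[n]$, taking the minimal-length $(k,n-k)$-shuffle $\sigma_S$ in each coset. Because $s_I$ and $s_J$ are each symmetric in their own arguments, applying $\sigma_S$ to the quotient yields
\[
\sigma_S\!\left(\frac{s_I\,s_J}{\prod_{i\le k<j\le n}(a_j-a_i)}\right) \;=\; \frac{s_I(a_S)\,s_J(a_{S^c})}{\prod_{i\in S,\,j\in S^c}(a_j-a_i)}.
\]
Expanding $s_I(a_S)=M_S(I)/V(a_S)$, and likewise for $S^c$, and then clearing to the common denominator $V(a_1,\dots,a_n)$ via the sign calculation of the previous paragraph, the whole symmetrization becomes
\[
p\bigl(s_I\,s_J\bigr) \;=\; \frac{(-1)^{k(n-k)}}{V(a_1,\dots,a_n)} \sum_{S\in\binom{[n]}{k}} \sgn(\sigma_S)\,M_S(I)\,M_{S^c}(J).
\]

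The final step is to recognize this sum as the Laplace expansion along the first $k$ rows of the single $n\times n$ alternant $\det(a_\ell^{K_{n-m+1}})_{1\le m,\ell\le n}$, where $K=(J,I)$ is the concatenation whose first $n-k$ entries are $j_{k+1},\dots,j_n$ and whose last $k$ entries are $i_1,\dots,i_k$. Dividing this alternant by $V(a_1,\dots,a_n)$ then reproduces $s_{J,I}(a_1,\dots,a_n)$ by Definition~\ref{defn:general}, which completes the identification. The main obstacle is bookkeeping signs: one has to reconcile the sign $(-1)^{k(n-k)}$ arising from the Vandermonde comparison with the Laplace sign $\sgn(\sigma_S)=(-1)^{\sum S-\binom{k+1}{2}}$, and to check that they conspire so that only the uniform factor $(-1)^{k(n-k)}$ survives. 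The key identity making this work is $\#\mathrm{inv}(\sigma_S)+\alpha(S)=k(n-k)$, where $\alpha(S):=|\{p<q:p\in S,\,q\in S^c\}|$ is precisely the sign exponent produced when comparing $\prod_{i\in S,j\in S^c}(a_j-a_i)$ with the mixed block of the full Vandermonde.
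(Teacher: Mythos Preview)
Your argument is correct. The paper itself gives no proof of this lemma; it simply records the statement and refers to \cite{lascoux1988interpolation}. Your direct verification via the alternant form of Definition~\ref{defn:general}, followed by recognition of the symmetrized sum as the Laplace expansion of the $s_{J,I}$ numerator along its first $k$ rows, is the standard self-contained route and all of your sign bookkeeping checks out: $\ell(\sigma_S)=\sum_{s\in S}s-\binom{k+1}{2}$ matches the Laplace sign, while $\ell(\sigma_S)+\alpha(S)=k(n-k)$ produces the global $(-1)^{k(n-k)}$. So your write-up actually supplies a proof the paper omits.
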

\begin{rem}
By definition, $p$ is clearly additive. Moreover, because
\[\ZZ[x_1,\hdots,x_n]^{\SS_k\times\SS_{n-k}}\cong\ZZ[x_1,\hdots,x_k]^{\SS_k}\ot_{\ZZ}\ZZ[x_{k+1},\hdots,x_n]^{\SS_{n-k}}\]
and the Schur polynomials form a $\ZZ$-basis for the ring of symmetric functions (see \cite[I.3.2]{macdonald_symm}), the formula in Lemma \ref{lem:formula} completely determines $p$.
\end{rem} 
\begin{cor}\label{cor:formula2}
The map $p$ satisfies the following properties:
\begin{enumerate}
\item $p(fg)=f\cd p(g)$ for every $\SS_n$-invariant polynomial $f$.
\item $p(s_I(a_1,\hdots,a_k))=(-1)^{k(n-k)}s_{0,\hdots,n-k-1,I}(a_1,\hdots,a_n)$ for every $k$-tuple $I$; in particular, $p(s_I(a_1,\hdots,a_k))=0$ whenever $i_1<n-k$.
\item $p(s_I(a_1,\hdots,a_k))$ is either a Schur polynomial in $a_1,\hdots,a_n$ or zero for every $k$-tuple $I$.
\end{enumerate}
\end{cor}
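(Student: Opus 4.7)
The plan is to derive all three parts as direct consequences of the definition of $p$ as the Lagrange--Sylvester symmetrizer together with Lemma~\ref{lem:formula}; essentially no new ideas beyond those are needed.

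For part (1), I would work straight from the formula in Lemma~\ref{gysin_map}. If $f \in \ZZ[a_1,\hdots,a_n]^{\SS_n}$, then $\sigma(f) = f$ for every coset representative $\sigma \in \SS_n / (\SS_k \times \SS_{n-k})$, so $f$ pulls out of each summand:
\[p(fg) = \sum_{\sigma} \sigma\Bigl(\tfrac{fg}{\prod_{i,j}(a_j - a_i)}\Bigr) = f \sum_\sigma \sigma\Bigl(\tfrac{g}{\prod_{i,j}(a_j - a_i)}\Bigr) = f \cdot p(g).\]

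For part (2), the trick is to specialize Lemma~\ref{lem:formula} by choosing the second increasing sequence to be the minimal one, $J = (0, 1, \hdots, n-k-1)$. By Remark~\ref{rem:Schur_val}, $s_J(a_{k+1}, \hdots, a_n) = 1$, so Lemma~\ref{lem:formula} instantly reads
\[p\bigl(s_I(a_1,\hdots,a_k)\bigr) = p\bigl(s_I \cdot s_J\bigr) = (-1)^{k(n-k)} s_{J,I}(a_1,\hdots,a_n),\]
which is the main formula. For the ``in particular'' clause, note that when $i_1 < n-k$ the first entry of $I$ already appears in $J$, so the concatenation $J, I = (0, 1, \hdots, n-k-1, i_1, \hdots, i_k)$ has a repeated value; interpreted via Definition~\ref{defn:general}, the associated alternant in the numerator then has two equal rows and vanishes.

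Part (3) is then immediate from the dichotomy uncovered in (2). Either $i_1 \geq n-k$, in which case $(0, 1, \hdots, n-k-1, i_1, \hdots, i_k)$ is itself a strictly increasing sequence and $s_{J,I}$ is, by definition, a Schur polynomial in $a_1, \hdots, a_n$; or $i_1 < n-k$ and $p(s_I) = 0$ by the previous paragraph. The main (minor) obstacle throughout is the bookkeeping between the three conventions for Schur functions given in Definitions~\ref{defn:Schur1}, \ref{defn:Schur2}, and \ref{defn:general}: one needs to keep track of when a concatenation $J, I$ is strictly increasing (so that $s_{J,I}$ corresponds to an honest partition via Notation~\ref{nt:partition_seq}) versus when it merely satisfies the more general Definition~\ref{defn:general} that forces vanishing in the presence of a repeated entry.
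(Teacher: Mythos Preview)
Your proposal is correct and follows essentially the same approach as the paper: part (1) via $\sigma$-invariance of $f$ directly from the symmetrizer formula, part (2) by specializing Lemma~\ref{lem:formula} to $J=(0,\hdots,n-k-1)$ and using $s_J=1$, then observing that a repeated entry in the concatenation forces the alternant to vanish, and part (3) from the resulting dichotomy. The only cosmetic difference is that the paper cites Lemma~\ref{lem:JT} explicitly in part (3) to identify $s_{0,\hdots,n-k-1,I}$ as a Schur polynomial, whereas you appeal directly to the definition; both are fine.
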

\begin{proof}
Whenever $f$ is symmetric, we have $\sg(fg)=f\sg(g)$ and thus $p(fg)=f\cd p(g)$, which is claim (1).
On the other hand, clearly $s_{0,\hdots,n-k-1}(a_{k+1},\hdots,a_n)=1$. The first part of claim (2) follows now from Lemma~\ref{lem:formula}. For the second part of claim (2), note that whenever $i_1<n-k$, we have $s_{0,\hdots,n-k-1,I}(a_1,\hdots,a_n)=\frac{F(a_1,\hdots,a_n)}{V_n(a_1,\hdots,a_n)}$, where $F$ is the determinant of a matrix with two identical rows and hence must be zero.  

\medskip
From (2), we know that either $p(s_I(a_1,\hdots,a_k))=0$ or else $0,1,\hdots,n-k-1,I$ is a strictly increasing sequence of non-negative integers. In the latter case, Lemma~\ref{lem:JT} establishes that $p(s_I(a_1,\hdots,a_k))$ is a Schur polynomial, which is claim (3). 
\end{proof}

For our main application, $X$ will be the Picard variety $\Pic^d(C)$ of a smooth curve $C$ of genus $g$ and $E$ will be the pushforward $\pi_{2 \ast}(\mL(D^{\pr}))$ of the twist of a Poincar\'e line bundle $\mL$ over $C\times\Pic^d(C)$, by the pullback $D^{\pr}$ of an effective divisor $D$ on $C$ of degree at least $\max\{2g-1-d,0\}$.
Since ultimately we are interested in whether certain cohomology classes over $\Pic^d(C)$ are non-zero, we work up to numerical equivalence. We will apply the following well-known result of Mattuck.
\begin{lem}[\cite{Mattuck3}, Example 14.4.5 of \cite{Int}]\label{Poincare}
Suppose $d>2g-2$. The Segre class of the pushforward of a Poincar\'e line bundle $\widetilde{\mL}$ is given by $s_k(\pi_{2 \ast}\widetilde{\mL})=w_k$ (see Notation~\ref{ab-jac}). Moreover, $k!w_k$ is numerically equivalent to $\theta^k$, where $\theta=w_1$ is the theta divisor class.
\end{lem}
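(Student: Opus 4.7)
The plan is to compute the Chern character of $\pi_{2\ast}\widetilde{\mL}$ via Grothendieck--Riemann--Roch applied to $\pi_2:C\times\Pic^d(C)\to\Pic^d(C)$, invert this to extract the total Segre class, and then observe that the second assertion is nothing but the classical Poincar\'e formula for the class of a Brill--Noether-type subvariety of the Jacobian.

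First, the hypothesis $d>2g-2$ ensures $H^1(C,L)=0$ for every $L\in\Pic^d(C)$, so cohomology-and-base-change gives $R^1\pi_{2\ast}\widetilde{\mL}=0$ and realizes $\pi_{2\ast}\widetilde{\mL}$ as a vector bundle of rank $d-g+1$. Up to numerical equivalence (which is all that matters for the statement), the first Chern class of a Poincar\'e line bundle takes the standard form $c_1(\widetilde{\mL})=d\eta+\gamma$, where $\gamma\in H^1(C)\ot H^1(\Pic^d(C))$ satisfies $\gamma\eta=0$ and $\gamma^2=-2\eta\theta$. These identities force $c_1(\widetilde{\mL})^3=0$, so the Chern character truncates as
\[\text{ch}(\widetilde{\mL})=1+d\eta+\gamma-\eta\theta.\]

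Next I combine this with the relative Todd class $\text{td}(T_{\pi_2})=1+(1-g)\eta$, expand, and apply the pushforward identities $\pi_{2\ast}(1)=0$, $\pi_{2\ast}(\eta)=1$, $\pi_{2\ast}(\eta\theta)=\theta$, and $\pi_{2\ast}(\gamma)=0$ (the last because $\gamma$ has a K\"unneth factor in $H^1(C)$ which integrates to zero over the fiber). Grothendieck--Riemann--Roch then yields
\[\text{ch}(\pi_{2\ast}\widetilde{\mL})=(d-g+1)-\theta,\]
with all higher-degree components vanishing. Unraveling Newton's identities inductively---the conditions $p_1=-\theta$ and $p_k=0$ for $k\ge 2$ force $c_k(\pi_{2\ast}\widetilde{\mL})=(-\theta)^k/k!$---we conclude $c(\pi_{2\ast}\widetilde{\mL})=e^{-\theta}$, whence $s(\pi_{2\ast}\widetilde{\mL})=e^{\theta}$ and $s_k(\pi_{2\ast}\widetilde{\mL})=\theta^k/k!$.

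The identification $s_k=w_k$ then follows from the classical Poincar\'e formula $[W_{g-k}]=\theta^k/k!$; in the notation of \ref{ab-jac} this reads $w_k=\theta^k/k!$, which simultaneously gives $s_k=w_k$ and $k!w_k\equiv\theta^k$ numerically. The only genuinely delicate point is navigating the sign and normalization conventions for Poincar\'e bundles and theta divisors, but since the statement is asserted modulo numerical equivalence these choices do not affect the conclusion.
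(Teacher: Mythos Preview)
Your proof is correct and follows the standard Grothendieck--Riemann--Roch approach. The paper itself does not prove this lemma; it is simply cited from Mattuck and from Fulton's \emph{Intersection Theory} (Example 14.4.5), so there is no ``paper's own proof'' to compare against. That said, your argument is entirely in the spirit of the paper: the identical GRR computation (with $\widetilde{\mL}$ replaced by $\mL^{\ot 2}$) is carried out explicitly in Section~\ref{subsec:chern_sq} to obtain $\mbox{ch}(\pi_{2\ast}\mL^{\ot 2})=(1-g+2d)-4\theta$, so your method is precisely what the authors would have written had they chosen to include a proof.

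One minor remark: Notation~\ref{ab-jac} in the paper contains a small internal inconsistency (the map $u_j$ described there has image of codimension $g-j$, not $j$), but the intended meaning---that $w_k$ is the codimension-$k$ class numerically equal to $\theta^k/k!$---is exactly what you use, and your appeal to the classical Poincar\'e formula is the right way to close the argument.
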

In our context, the lemma says that the total Segre class, $s(\pi_{2 \ast}\mL(D^{\pr}))$, is equal to $e^{\theta}$ up to numerical equivalence and hence $c(\pi_{2 \ast}\mL(D^{\pr}))=s(\pi_{2 \ast}\mL(D^{\pr}))^{-1}=e^{-\theta}$, i.e., that $c_k(\pi_{2 \ast}\mL(D^{\pr}))=\frac{(-1)^k\theta^k}{k!}$. Thus, the relations in $A(\Gr(r+1,\pi_{2 \ast}\mL(D^{\pr})))$ are concisely expressed by the equations
\[
\sum_{i=0}^kc_i(\mU)c_{k-i}(\mathcal{Q})=\frac{(-1)^k\theta^k}{k!}.
\]
Moreover, it does no harm to assume that $\deg(D)=2g-1-d$, so that $E=\pi_{2 \ast}(\mL(D^{\pr}))$ is a rank-$g$ vector bundle over $\Pic^d(C)$. Consequently, we have $\theta=a_1+\hdots+a_g$ and $\theta^{g+1}=0$. Representing elements in $A(\Gr(k,E))$ by elements in $(\ZZ[a_1,\hdots,a_k]^{\SS_k}\ot\ZZ[a_{k+1},\hdots,a_{g}]^{\SS_{g-k}})[\theta]$, we may think of the Gysin morphism $q$ as a map
\[
\frac{(\ZZ[a_1,\hdots,a_k]^{\SS_k}\ot\ZZ[a_{k+1},\hdots,a_{g}]^{\SS_{g-k}})[\theta]}{\lag e_j(a_1,\hdots,a_g)-\frac{(-\theta)^j}{j!}\mid j\ge 1\rag+\lag\theta^{g+1}\rag}\to\ZZ[\theta]/\lag\theta^{g+1}\rag, \text{ via }[\sum_{j=0}^gf_j(a_1,\hdots,a_g)\theta^j]\mapsto [\sum_{j=0}^g p(f_j)\theta^j].
\]
Here $e_j(a_1,\hdots,a_g)$ is the $j$-th elementary symmetric function in $a_1,\hdots,a_g$; while $f_0,\hdots,f_g$ are arbitrary elements in $\ZZ[a_1,\hdots,a_k]^{\SS_k}\ot\ZZ[a_{k+1},\hdots,a_{g}]^{\SS_{g-k}}$; and $[\cdot]$ denotes an equivalence class.
\begin{lem}\label{lem:coeff1}
For any strictly increasing sequence $I=(i_1<\hdots<i_k)$ with $i_1\ge g-k$, we have
\[q([s_I(a_1,\hdots,a_k)])=[(-1)^{|I|-\binom{k}{2}}\frac{\prod_{1\le j<\ell\le k}(i_{\ell}-i_j)}{\prod_{j=1}^k(i_j-g+k)!}]\cd\theta^{|I|-\binom{k}{2}-k(g-k)}.\]
\end{lem}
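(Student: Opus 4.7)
\medskip
\noindent
\textbf{Proof plan.}
The plan is to reduce the statement to a computation of a single explicit determinant, using the results already assembled. First I would apply Corollary~\ref{cor:formula2}(2) to rewrite
\[
p(s_I(a_1,\hdots,a_k)) = (-1)^{k(g-k)}\, s_J(a_1,\hdots,a_g),
\]
where $J=(0,1,\hdots,g-k-1,i_1,\hdots,i_k)$; this is a strictly increasing sequence precisely because the hypothesis $i_1\ge g-k$ holds. By the Jacobi--Trudi formula (Lemma~\ref{lem:JT} in the form of Definition~\ref{defn:Schur2}), $s_J(a_1,\hdots,a_g)$ equals the $g\times g$ determinant $\det(s_{J_\ell-k'+1}(a_1,\hdots,a_g))_{1\le k',\ell\le g}$, with $s_j=0$ for $j<0$.

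Next, I would pass to the quotient ring. Lemma~\ref{Poincare} together with the relation $c(E)=e^{-\theta}$ forces $s_j(a_1,\hdots,a_g) \equiv (-\theta)^j/j!$ (these are Segre classes of $E^\vee$ in the convention of Notation~\ref{nt:Schur_vb}). Substituting into the Jacobi--Trudi determinant, each entry in position $(k',\ell)$ becomes $(-\theta)^{J_\ell-k'+1}/(J_\ell-k'+1)!$. The total $\theta$-degree in every term of the Leibniz expansion is $|J|-\binom{g}{2}$, which (via $|J|=\binom{g-k}{2}+|I|$ and $\binom{g}{2}-\binom{g-k}{2}=k(2g-k-1)/2=\binom{k}{2}+k(g-k)$) simplifies to $|I|-\binom{k}{2}-k(g-k)$. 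Pulling this common factor of $(-\theta)^{|I|-\binom{k}{2}-k(g-k)}$ out of the determinant, I am left with the purely numerical determinant $\det(1/(J_\ell-k'+1)!)_{1\le k',\ell\le g}$.

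The key observation is that this determinant is block lower-triangular: for $\ell\le g-k$ we have $J_\ell=\ell-1<k'$ whenever $k'>g-k$, so the lower-left $k\times(g-k)$ block vanishes. The top-left $(g-k)\times(g-k)$ block is upper triangular with $1/(\ell-\ell)!=1$ on the diagonal, so its determinant is $1$. The bottom-right $k\times k$ block has entries $1/(\alpha_m-k''+1)!$ for $k'',m=1,\hdots,k$, where $\alpha_m:=i_m-g+k\ge 0$. Factoring $1/\alpha_m!$ from column $m$ yields the matrix with $(k'',m)$-entry equal to the polynomial $\alpha_m(\alpha_m-1)\cdots(\alpha_m-k''+2)$ (of degree $k''-1$ in $\alpha_m$ with leading coefficient $1$); row operations reduce this to the Vandermonde $\det(\alpha_m^{k''-1})=\prod_{m<n}(\alpha_n-\alpha_m)=\prod_{m<n}(i_n-i_m)$. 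Thus the bottom-right block has determinant $\prod_{m<n}(i_n-i_m)/\prod_m(i_m-g+k)!$.

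Finally, I would combine everything: the sign $(-1)^{k(g-k)}$ from Corollary~\ref{cor:formula2}(2) multiplies the sign $(-1)^{|I|-\binom{k}{2}-k(g-k)}$ extracted from $(-\theta)^{|I|-\binom{k}{2}-k(g-k)}$, giving a net sign of $(-1)^{|I|-\binom{k}{2}}$, and assembling the factors yields the claimed formula for $q([s_I(a_1,\hdots,a_k)])$. The only non-routine part is recognizing the block-triangular structure and identifying the bottom-right block as a disguised Vandermonde determinant; the rest amounts to careful bookkeeping of exponents and signs.
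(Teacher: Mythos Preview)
Your proof is correct and follows essentially the same route as the paper's: apply Corollary~\ref{cor:formula2}(2), express the resulting Schur function via Jacobi--Trudi, substitute $s_m(-E)=(-\theta)^m/m!$, and evaluate the remaining numerical determinant as a Vandermonde. The only cosmetic difference is that the paper invokes Jacobi--Trudi directly in its $k\times k$ form (since the partition $\lambda(J)$ has length at most $k$), whereas you write out the full $g\times g$ determinant and then reduce it via the block-triangular structure; your block argument is exactly what justifies passing to the smaller determinant. (Minor quibble: the vanishing of the lower-left block makes the matrix block \emph{upper}-triangular, not lower-triangular, but your reasoning is unaffected.)
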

\begin{proof}
Applying Corollary~\ref{cor:formula2}(2), we see that \[q([s_I(a_1,\hdots,a_k)])=[p(s_I(a_1,\hdots,a_k))]=[(-1)^{k(g-k)}s_{0,\hdots,g-k-1,I}(a_1,\hdots,a_g)].\] 
On the other hand, using the conventions established in Notation \ref{nt:Schur_vb} it is clear that
\[
s_{0,\hdots,g-k-1,I}(a_1,\hdots,a_g)\\
=\det\begin{bmatrix}
s_{i_1-g+k}(-E)&\hdots&s_{i_k-g+k}(-E)\\
\vdots &&\vdots\\
s_{i_1-g+1}(-E)&\hdots&s_{i_k-g+1}(-E)\\
\end{bmatrix}
\]
where $E=\pi_{2 \ast}\mL(D^{\pr})$. Substituting $s_k(-E)={(-\theta)^k\over k!}$, we get 
\[q([s_I(a_1,\hdots,a_k)])=\det\begin{bmatrix}
{1\over (i_1-g+k)!}&\hdots&{1\over (i_k-g+k)!}\\
\vdots &&\vdots\\
{1\over (i_1-g+1)!}&\hdots&{1\over (i_k-g+1)!}
\end{bmatrix}(-\theta)^{|I|-\binom{k}{2}-k(g-k)}.\]

\noindent It is well-known that the determinant 
\[\det\begin{bmatrix}
{1\over (i_1-g+k)!}&\hdots&{1\over (i_k-g+k)!}\\
\vdots &&\vdots\\
{1\over (i_1-g+1)!}&\hdots&{1\over (i_k-g+1)!}
\end{bmatrix}
\]
is a multiple of the Vandermonde determinant and can be computed as $\frac{\prod_{1\le j<\ell\le k}(i_{\ell}-i_j)}{\prod_{j=1}^k(i_j-g+k)!}$; cf. \cite[Ch.7, proof of Thm 4.4]{ACGH}.  The claim follows. 
\end{proof}

\subsection{The Littlewood--Richardson rule in terms of Schur functions}
Schur functions provide a convenient way of writing down the product of Schubert classes on a Grassmann bundle.
\begin{lem}\label{lem:lr}
Let $\Lambda_n$ be the ring of symmetric polynomials in $n$ variables. Fix $\la=(\underbrace{a,a,\hdots,a}_{n\text{ times}})$ with $a>0$ and suppose $\mu$ is any partition for which $|\mu|\le n$. We then have $s_{\la}\cdot s_{\mu}=s_{\la+\mu}$ in $\Lambda_n$.
\end{lem}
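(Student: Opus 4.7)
The cleanest route is to apply the bialternant (Weyl-type) formula of Definition~\ref{defn:Schur1} directly, using the fact that a Schur polynomial indexed by a rectangle of height equal to the number of variables collapses to a monomial in the product of the variables. This reduces the claim to an elementary manipulation of the determinants appearing in the ratio defining $s_\mu$.

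\textbf{Step 1: Evaluate $s_{\lambda}$ as a monomial.} With $\lambda=(a,a,\dots,a)$ of length $n$, Definition~\ref{defn:Schur1} gives
\[
s_{\lambda}(x_1,\dots,x_n)=\frac{\det\bigl(x_j^{a+n-i}\bigr)_{1\le i,j\le n}}{\prod_{1\le j<k\le n}(x_j-x_k)}.
\]
Pulling the factor $x_j^{a}$ out of the $j$-th column of the numerator shows that $\det(x_j^{a+n-i})=(x_1x_2\cdots x_n)^{a}\cdot\det(x_j^{n-i})$, and the latter determinant is precisely the Vandermonde that cancels the denominator. Hence $s_{\lambda}(x_1,\dots,x_n)=(x_1\cdots x_n)^{a}$.

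\textbf{Step 2: Multiply into $s_\mu$.} Applying Definition~\ref{defn:Schur1} to $\mu=(\mu_1,\dots,\mu_n)$ (padding with zeros if necessary, which is legitimate since $\ell(\mu)\le n$), we obtain
\[
s_{\lambda}(x_1,\dots,x_n)\cdot s_{\mu}(x_1,\dots,x_n)
=\frac{(x_1\cdots x_n)^{a}\,\det\bigl(x_j^{\mu_i+n-i}\bigr)}{\prod_{j<k}(x_j-x_k)}
=\frac{\det\bigl(x_j^{(\mu_i+a)+n-i}\bigr)}{\prod_{j<k}(x_j-x_k)},
\]
where again we pulled $x_j^{a}$ into the $j$-th column of the determinant. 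Since $\lambda+\mu=(\mu_1+a,\dots,\mu_n+a)$ is still weakly decreasing (because $\mu$ is a partition and we add the same integer to each part), the right-hand side is exactly $s_{\lambda+\mu}(x_1,\dots,x_n)$ by Definition~\ref{defn:Schur1}.

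\textbf{Remarks on obstacles.} There is essentially no obstacle here; the entire argument is bookkeeping with determinants. The only conceptual point worth flagging is the hypothesis that $\mu$ has at most $n$ parts, without which $s_\mu$ vanishes in $\Lambda_n$ and the addition $\lambda+\mu$ would not land in a partition of length $n$. The result also admits a representation-theoretic reading (tensoring with the determinant character of $\mathrm{GL}_n$ raised to the power $a$ shifts highest weights by $(a,\dots,a)$), and a Littlewood--Richardson reading (for a rectangular $\lambda$ of height $n$ the only non-vanishing LR coefficient $c^{\nu}_{\lambda\mu}$ with $\ell(\nu)\le n$ occurs at $\nu=\lambda+\mu$), but for the present purposes the determinantal derivation above is the most self-contained.
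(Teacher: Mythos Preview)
Your argument is correct and in fact more elementary than either proof the paper gives. You work directly with the bialternant formula (Definition~\ref{defn:Schur1}): once you observe that $s_{(a,\dots,a)}(x_1,\dots,x_n)=(x_1\cdots x_n)^a$, multiplying the column monomials into the numerator of $s_\mu$ immediately produces the numerator of $s_{\lambda+\mu}$. The paper's main proof instead appeals to the Littlewood--Richardson rule, arguing inductively that any LR tableau of skew shape $\nu/\lambda$ and weight $\mu$ is forced row by row to have shape $(\lambda+\mu)/\lambda$; the Remark following the proof gives a second argument via a coefficient-extraction identity from \cite{stanley_fomin_1999}, which also passes through the fact that $s_\lambda=(x_1\cdots x_n)^a$ but then expands $V\cdot s_\lambda\cdot s_\mu$ as a signed sum over $\SS_n$. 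Your route avoids both the combinatorics of LR tableaux and the permutation-sum bookkeeping, at the cost of yielding no information about the LR coefficients themselves (which the paper does not actually need here). One small remark: your proof only uses $\ell(\mu)\le n$, which is implied by but weaker than the stated hypothesis $|\mu|\le n$; this is harmless, since proving the stronger-hypothesis statement from the weaker one is immediate.
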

\begin{proof}
We begin by writing
\[s_{\la}\cdot s_{\mu}=\sum_{\nu\in\mathcal{P}_{|\la|+|\mu|,n}}c^{\nu}_{\la,\mu}s_{\nu}\]
where $\mathcal{P}_{|\la|+|\mu|,n}$ denotes the set of all partitions of length at most $n$ and size $|\la|+|\mu|$, and where the coefficients $c^{\nu}_{\la,\mu}$ are non-negative integers. Now suppose $c^{\nu}_{\la,\mu}\neq 0$. Since $\la$ is rectangular with $n$ rows, the length of $\nu$ is then necessarily exactly $n$ and $\nu-\la$ is still a partition.

\medskip
According to the Littlewood-Richardson rule, $c^{\nu}_{\la,\mu}$ counts the number of {\it Littlewood-Richardson tableaux}. These are semi-standard Young tableaux of shape $\nu-\la$ and weight $\mu$ whose characteristic property is that concatenating their reversed rows yields a word which is a lattice permutation; that is, in every initial part of the word, any number $i<n$ occurs at least as many times as $i+1$. The characteristic property immediately forces all the entries in the first row of such a tableau to be 1, and no further ones can occur in this tableau, since entries in every column form a strictly increasing sequence. Thus, $\nu_1-a=\mu_1$.

\medskip
Now remove the first row; the result is a Littlewood-Richardson tableau of shape $(\nu_2,\hdots,\nu_n)-(a,\hdots,a)$ and weight $(\mu_2,\hdots,\mu_n)$ on the alphabet $\{2,\hdots,n\}$. Applying the same argument as before, we get $\nu_2-a=\mu_2$. By induction, we conclude that $\nu=\mu$ and in particular $c^{\nu}_{\la,\mu}=1$.
\end{proof}
\begin{rem}
An alternative proof of the same result may be derived from Corollary 7.15.2 in \cite{stanley_fomin_1999}, which establishes that $c^{\nu}_{\la,\mu}$ is equal to the coefficient of $x^{\nu+\delta}$ in $V(x_1,\hdots,x_n)s_{\la}(x_1,\hdots,x_n)s_{\mu}(x_1,\hdots,x_n)$, where $\delta=(n-1,n-2,\hdots,1,0)$.

To wit, note that 
$s_{\la}(x_1,\hdots,x_n)=x_1^a\hdots x_n^a$ when $\la=(a,\hdots,a)$, as the only semi-standard Young tableau on the alphabet $\{1,2,\hdots,n\}$ is the one with every column equal to $(1,2,\hdots,n)^t$. It follows that
\[
\begin{aligned}
&V(x_1,\hdots,x_n)s_{\la}(x_1,\hdots,x_n)s_{\mu}(x_1,\hdots,x_n)=(x_1^ax_2^a\hdots x_n^a)\cd\det([x^{\mu_i+n-i}_j]_{1\le i,j\le n})\\
&=(x_1^ax_2^a\hdots x_n^a)\cd\sum_{\sg\in\SS_n}\sgn(\sg)\sg(x_1^{\mu_1+n-1}x_2^{\mu_2+n-2}\hdots x_n^{\mu_n})\\
&=\sum_{\sg\in\SS_n}\sgn(\sg)x_1^{a+\mu_{\sg(1)}+n-\sg(1)}x_2^{a+\mu_{\sg(2)}+n-\sg(2)}\hdots x_n^{a+\mu_{\sg(n)}}.
\end{aligned}
\]
Note also that $\nu+\delta$ is a strictly decreasing sequence. But $(a+\mu_{\sg(1)}+n-\sg(1),\hdots,a+\mu_{\sg(n)})$ is strictly decreasing if and only if the permutation $\sg \in\SS_n$ is the identity. So if $c^{\nu}_{\la,\mu}\neq 0$ then necessarily $\nu=\la+\mu$, in which case $c^{\nu}_{\la,\mu}=1$. 
\end{rem}
Using the alternative convention for Schur polynomials as in Definition \ref{defn:Schur2}, 
Lemma~\ref{lem:lr} becomes the following statement.
\begin{cor}\label{cor:Schur2}
Let $I=(i_1,\hdots,i_n)$ be an arbitrary strictly increasing sequence of non-negative integers and $J=(a,a+1,\hdots,a+n-1)$ ($a\ge 0$). We then have
\[s_I(x_1,\hdots,x_n)s_J(x_1,\hdots,x_n)=s_K(x_1,\hdots,x_n)\]
where $K=I+J-(0,1,\hdots,n-1)=I+(a,a,\hdots,a)$.
\end{cor}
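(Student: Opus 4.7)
The plan is to reduce this to Lemma~\ref{lem:lr} by translating between the two equivalent conventions for Schur polynomials via Jacobi--Trudi (Lemma~\ref{lem:JT}) and Notation~\ref{nt:partition_seq}. First I would dispose of the trivial case $a=0$: then $J=(0,1,\hdots,n-1)$, so $s_J=1$ by Remark~\ref{rem:Schur_val}, and $K=I$, making the identity tautological. Henceforth I assume $a>0$.

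Next I would convert each of $I$, $J$, $K$ to its associated partition. Writing $j_k=a+k-1$ for the entries of $J$, a direct computation from Notation~\ref{nt:partition_seq} gives
\[
\la(J)_k \;=\; j_{n-k+1} - n + k \;=\; (a+n-k) - n + k \;=\; a,
\]
so $\la(J) = (a,a,\hdots,a)$ is an $n\times a$ rectangle with length exactly $n$. For $K = I + (a,a,\hdots,a)$, the same formula yields $\la(K)_k = (i_{n-k+1}+a) - n + k = \la(I)_k + a$, so
\[
\la(K) \;=\; \la(I) + \la(J).
\]
Meanwhile, because $I$ is strictly increasing of length $n$, the partition $\la(I)$ has length at most $n$, so it is admissible as the second factor in Lemma~\ref{lem:lr}.

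Applying Jacobi--Trudi (Lemma~\ref{lem:JT}) to each of $I$, $J$, $K$ rewrites the desired identity in the partition convention as
\[
\bfs_{\la(I)}(x_1,\hdots,x_n)\cdot \bfs_{\la(J)}(x_1,\hdots,x_n) \;=\; \bfs_{\la(K)}(x_1,\hdots,x_n).
\]
Since $\la(J) = (a,\hdots,a)$ is a rectangle with $n$ rows and $\la(I)$ has length at most $n$, Lemma~\ref{lem:lr} gives exactly $\bfs_{\la(J)}\cdot \bfs_{\la(I)} = \bfs_{\la(I)+\la(J)} = \bfs_{\la(K)}$ in $\Lambda_n$. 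This completes the proof.

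The main thing to be careful about is the bookkeeping: ensuring that the shift $(0,1,\hdots,n-1)$ built into the correspondence $I \leftrightarrow \la(I)$ cancels correctly so that $\la(K) = \la(I)+\la(J)$ rather than something off by a staircase. No deep obstacle is expected; the content of the corollary is genuinely just a change of notation applied to Lemma~\ref{lem:lr}.
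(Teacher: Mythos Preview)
Your proposal is correct and follows exactly the route the paper intends: the paper presents this corollary as nothing more than Lemma~\ref{lem:lr} rewritten in the sequence convention, and your argument carries out precisely that translation via Notation~\ref{nt:partition_seq} and Lemma~\ref{lem:JT}.

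One small caveat worth noting: Lemma~\ref{lem:lr} as stated in the paper hypothesizes $|\mu|\le n$ (size at most $n$), whereas you invoke it for $\mu=\la(I)$, which in general only has \emph{length} at most $n$. This is harmless, since the proof of Lemma~\ref{lem:lr} never uses the size restriction and the paper itself applies the lemma in this generality; but strictly speaking you are relying on the lemma's proof rather than its stated hypothesis.
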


\subsection{Characteristic classes of symmetric squares of vector bundles}
The symmetric square of the tautological subbundle on (the total space of) a Grassmann bundle plays a key role in the enumerative geometry of SMRC loci. Laksov, Lascoux and Thorup gave formulae (hereafter, the {\it LLT formulae}) that express the Chern and Segre classes of the symmetric square of a vector bundle in terms of its Segre classes. 
\begin{thm}[Proposition 2.8.4, 2.8.6, \cite{laksov1989giambelli}]\label{thm:LLT1}
Let $E$ be a vector bundle of rank $n$. We have
\begin{enumerate}
    \item $c(\sy^2E)=(-1)^{\binom{n}{2}}2^{-n(n-1)}\sum_I(-2)^{|I|}d_Is_I(E)$;
    \item $s((\sy^2E)^{\vee})=\sum_I\psi_Is_I(E)$.
\end{enumerate}
Here both sums are taken over all strictly increasing  sequences of non-negative integers. The coefficients $d_I$ are defined by
\[d_I=\det \begin{bmatrix}
(-1)^{i_1}\cd\binom{2n-1}{i_1}&(-1)^{i_1}\cd\binom{2n-3}{i_1}&\hdots&(-1)^{i_1}\cd\binom{1}{i_1}\\
(-1)^{i_2}\cd\binom{2n-1}{i_2}&(-1)^{i_2}\cd\binom{2n-3}{i_2}&\hdots&(-1)^{i_2}\cd\binom{1}{i_2}\\
\vdots&\vdots&&\vdots\\
(-1)^{i_n}\cd\binom{2n-1}{i_n}&(-1)^{i_n}\cd\binom{2n-3}{i_n}&\hdots&(-1)^{i_n}\cd\binom{1}{i_n}\\
\end{bmatrix}\]
and the coefficients $\psi_I$ are given by the following recursive relation:
\begin{enumerate}
 \item $\psi_i=2^i$, $\psi_{i,j}=\sum_{\ell=i+1}^{j}\binom{i+j}{\ell}$;
 \item $n\cd\psi_{i_1,\hdots,i_n}-2\sum_k\psi_{i_1,\hdots,i_k-1,\hdots,i_n}=0$ if $i_1>0$;
 \item $n\cd\psi_{0,i_2,\hdots,i_n}-2\sum_k\psi_{0,\hdots,i_k-1,\hdots,i_n}=\psi_{i_2,\hdots,i_n}$.\footnote{In (2) and (3), if $i_k-1=i_{k-1}$, $\psi_{i_1,\hdots,i_k-1,\hdots,i_n}$ is interpreted as 0.}
 \end{enumerate}
\end{thm}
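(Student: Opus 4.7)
The plan is to establish both identities via the splitting principle, reducing each to an explicit symmetric-function computation in the Chern roots of $E$, and then to read off Schur coefficients through a Vandermonde-antisymmetrization argument.

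\textbf{Step 1 (splitting principle).} We may assume $E = L_1 \oplus \cdots \oplus L_n$ with $c_1(L_i) = a_i$, so that $\sy^2 E = \bigoplus_{1 \le i \le j \le n} L_i \otimes L_j$. Hence
$$c(\sy^2 E) \;=\; \prod_{1 \le i \le j \le n}(1 + a_i + a_j) \;=\; \prod_i(1 + 2 a_i) \cdot \prod_{i < j}(1 + a_i + a_j),$$
and $s((\sy^2 E)^\vee) = \prod_{i \le j}(1 - a_i - a_j)^{-1}$. The diagonal factor $\prod_i(1 + 2 a_i)$ is the source of the weights $(-2)^{|I|}$ in part (1), while the overall normalization $2^{-n(n-1)}$ will come from packaging the off-diagonal product.

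\textbf{Step 2 (Chern class expansion).} By Definition~\ref{defn:Schur1}, the coefficient of $s_I(E)$ in the Schur expansion of a symmetric polynomial $f(a)$ is read off from the coefficient of the antisymmetrization of $a_1^{i_1} \cdots a_n^{i_n}$ inside $V(a) f(a)$, where $V(a) = \prod_{i<j}(a_i - a_j)$. Applying this to $f = c(\sy^2 E)$, the off-diagonal factor $\prod_{i<j}(1 + a_i + a_j)$ combines with $V(a)$, via a classical Littlewood-style alternant identity, to produce a single $n \times n$ determinant whose $(k,l)$-entry is a polynomial in $a_k$ with coefficients $\binom{2(n-l)+1}{\cdot}$. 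Multilinear expansion of that determinant then identifies the coefficient of $s_I(E)$ as the binomial determinant $d_I$, adorned with the sign $(-1)^{\binom{n}{2}}$ (from orientation of $V(a)$) and the normalization $2^{-n(n-1)}$.

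\textbf{Step 3 (Segre class recursion).} For part (2), rather than computing $\psi_I$ directly, one verifies that the $\{\psi_I\}$ are characterized by the stated recursion. Applying the derivation $D := \sum_\ell \partial/\partial a_\ell$ to $\Phi := \prod_{i \le j}(1 - a_i - a_j)^{-1}$ gives a generating-function identity; comparing it with the action of $D$ on the Schur expansion $\Phi = \sum_I \psi_I s_I(a)$ and equating coefficients of each Schur basis element produces the linear recurrences (2) and (3), with the inhomogeneous term in (3) reflecting the boundary contribution at $i_1 = 0$. The base cases (1) are verified by direct computation on rank-one and rank-two bundles. \emph{The main obstacle} is the Littlewood-style alternant manipulation in Step 2: realizing $V(a) \cdot \prod_{i<j}(1 + a_i + a_j)$ as an $n \times n$ binomial determinant. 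This identity is classical but delicate, and once it is in hand the sign and normalization prefactors fall out by careful bookkeeping.
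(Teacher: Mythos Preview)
The paper does not prove this theorem at all: it is quoted verbatim as Proposition~2.8.4 and~2.8.6 of Laksov--Lascoux--Thorup and used as a black box. So there is no ``paper's own proof'' to compare against; you are supplying an argument where the authors simply cite one.

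Your sketch is in the spirit of how Laksov--Lascoux--Thorup themselves proceed (splitting principle, antisymmetrization to extract Schur coefficients, a differential recursion for the Segre side), and Steps~1 and~3 are broadly sound. However, Step~2 is not a proof but a pointer to one: you correctly identify the crux as the alternant identity expressing $V(a)\prod_{i<j}(1+a_i+a_j)$ as an $n\times n$ binomial determinant, then label it ``classical but delicate'' and move on. That identity \emph{is} the theorem in part~(1); without establishing it (or at least indicating the row/column operations or generating-function manipulation that produces the specific entries $(-1)^{i_k}\binom{2n-2l+1}{i_k}$), you have restated the result rather than proved it. The sign $(-1)^{\binom{n}{2}}$ and the factor $2^{-n(n-1)}$ also need to be tracked through the diagonal factor $\prod_i(1+2a_i)$ and the Vandermonde orientation, and your bookkeeping remark does not show how they actually emerge.

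In Step~3, the derivation argument is plausible but you should check that applying $D=\sum_\ell\partial/\partial a_\ell$ to the Schur basis really produces only the shifts $i_k\mapsto i_k-1$ with the correct multiplicities, and that the boundary term at $i_1=0$ accounts exactly for the drop in length on the right-hand side of recursion~(3). As written this is asserted rather than verified.
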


\begin{rem}\label{B_matrix}
Let $\mathbf{B}$ denote the infinite matrix whose $(i,j)$-th entry is $\binom{i}{j}$, and let  
$\mathbf{B}^J_I$ denote the submatrix of $\mathbf{B}$ consisting of its $I$-th columns and $J$-th rows, where $\ell(I)=\ell(J)$.
We then have
\[d_I=
(-1)^{\deg(s_I)}\det(\mathbf{B}^{1,3\hh 2n-1}_{I}).\]
\end{rem}

As pointed out in \cite{laksov1989giambelli}, the convention that Laksov--Lascoux--Thorup use for the $i$-th Segre class, $s_i(E)$ differs from that of \cite{Int} by a factor of $(-1)^i$. In other words, the Segre classes in \cite{laksov1989giambelli} are the Segre classes of $E^{\vee}$ in \cite{Int}. Since we adopt the conventions of \cite{Int} in this paper, we rewrote the original formula of Laksov-Lascoux-Thorup accordingly.    

\subsection{Shifted Schur functions}
The LLT coefficients $d_I$ of the preceding subsection are closely related to the {\it shifted} Schur functions introduced by Okounkov and Olshanski in \cite{okounkov1997shifted}.

\begin{nt}[\cite{okounkov1997shifted}, (5.2), (5.3)]\label{nt:raising_fac}
Let $\mu$ be any Young diagram. Starting from the upper-left corner, identify the box in the $i$-th row, $j$-th column with the integer vector $(i,j)$. 
The {\it $\mu$-th generalized raising factorial of $n$} is defined by
\[
(n\upharpoonright\mu):=\prod_{(i,j)\in\mu}(n+j-i)=\prod_i\frac{(\mu_i+n-i)!}{(n-i)!}=\prod_i(\mu_i+n-i)_{\mu_i}.\footnote{Hereafter, $(a)_b$ will always denote the falling factorial $a(a-1)...(a-b+1)$.}
\]
\end{nt}
\begin{nt}[\cite{okounkov1997shifted},(11.1)]\label{nt:falling_fac}
Similarly, 
the {\it $\mu$-th generalized falling factorial of $n$} is defined by 
\[
(n\downharpoonright\mu):=\prod_{(i,j)\in\mu}(n-j+i)=\prod_i(n+i-1)_{\mu_i}.
\]
More generally, given any skew diagram $\mu/\nu$, we set 
$(n\downharpoonright\mu/\nu):=(n\downharpoonright\mu)\cd(n\downharpoonright\nu)^{-1}$.
\end{nt}
\begin{ex}
$(n\upharpoonright 1^n)=(n\downharpoonright (n))=n!$. In general, we have $(n\upharpoonright \mu)=(n\downharpoonright \mu^{\pr})$, where $\mu^{\pr}$ is the conjugate of $\mu$.  
\end{ex}

\begin{defn}
Let $\mu$ be a partition of length at most $n$. The {\it shifted Schur polynomial in $n$ variables with respect to $\mu$} is
\[s^*_{\mu}(x_1\hh x_n):=\frac{\det((x_i+n-i)_{i_j})_{1\le i,j\le n}}{\det((x_i+n-i)_j)_{1\le i,j\le n}},\]
where $(i_1,i_2\hh i_n)=I_n(\mu)$ (see Notation \ref{nt:partition_seq}).
\end{defn}
In \cite{okounkov1997shifted}, Okounkov and Olshanski introduced and studied the ring $\Lambda^*(n)$ of shifted polynomials in $n$ variables to be the algebra consisting of all $n$-variable polynomials that become symmetric after {\it shifting} variables according to
\[x^{\pr}_i=x_i-i+\text{const},\qquad i=1\hh n. \] 

The ring $\Lambda^*:=\projlim\Lambda^*(n)$ of {\it shifted symmetric functions}\footnote{Okounkov and 
Olshanski showed there is a natural map $\Lambda^*(n+1)\to \Lambda^*(n)$ defined by setting $x_{n+1}$ equal to 0, and that a well-defined limit over all $\Lambda^*(n)$ exists in the category of filtered algebras.} carries structures similar to those of the ring of classical symmetric functions.
\begin{thm}\label{thm:ok1}[Theorem 4.1, 4.2, \cite{okounkov1997shifted}]
There exists an involution $\omega:\Lambda^*\to \Lambda^*$ satisfying the following properties:
\begin{enumerate}
\item $\omega(f)(\lambda)=f(\lambda^{\pr})$, for all $f\in\Lambda^*$.
\item $\omega(s^*_{\mu})=s^*_{\mu^{\pr}}$.
\end{enumerate}
Here $\lambda^{\pr}$ denotes the conjugate of an arbitrary partition $\la$. 
\end{thm}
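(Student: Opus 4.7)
The plan is to construct $\omega$ by characterizing it through its action on partitions. First, I would observe that the evaluation map $f \mapsto (\lambda \mapsto f(\lambda_1, \lambda_2, \ldots))$, with $\lambda$ padded by trailing zeros, embeds $\Lambda^*$ injectively into the algebra of functions on partitions. This injectivity is essentially a consequence of a key structural result in \cite{okounkov1997shifted}: the shifted Schur functions $\{s^*_\mu\}$ form a $\QQ$-basis of $\Lambda^*$ and satisfy the vanishing property $s^*_\mu(\lambda) = 0$ whenever $\mu \not\subseteq \lambda$, while $s^*_\mu(\mu) = H(\mu) \neq 0$, where $H(\mu)$ is the hook-length product. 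Together these render the evaluation matrix $[s^*_\mu(\lambda)]$ upper-unitriangular (up to diagonal scalars) in the inclusion order on partitions, hence of full rank.

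Granted this embedding, property (1) may be taken as the \emph{defining} formula $\omega(f)(\lambda) := f(\lambda')$; what then needs proof is that this operation maps $\Lambda^*$ into itself. It suffices to check this on a multiplicative set of generators. A convenient choice is the family of shifted power sums $p^*_k(\lambda) := \sum_{i \ge 1}\bigl[(\lambda_i - i)^k - (-i)^k\bigr]$, which generate $\Lambda^*$. A direct combinatorial rearrangement --- summing over the boxes of the Young diagram of $\lambda$ and swapping the roles of rows and columns --- expresses $p^*_k(\lambda')$ as a polynomial in the $p^*_j(\lambda)$ for $j \le k$, thereby yielding a ring endomorphism $\omega$ with $\omega(p^*_k) \in \Lambda^*$. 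Since $\lambda \mapsto \lambda'$ is itself an involution on partitions and partition evaluation is injective, $\omega$ is automatically an involution, establishing (1).

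For the second assertion, I would invoke the vanishing characterization of $s^*_\mu$ (Okounkov's vanishing theorem): $s^*_\mu$ is the unique element of $\Lambda^*$ of filtration degree at most $|\mu|$ such that $s^*_\mu(\lambda) = 0$ whenever $|\lambda| \le |\mu|$ and $\lambda \ne \mu$, with $s^*_\mu(\mu) = H(\mu)$. Evaluating $\omega(s^*_\mu)$ at $\lambda$ gives $s^*_\mu(\lambda')$, which vanishes unless $\mu \subseteq \lambda'$, equivalently $\mu' \subseteq \lambda$. In particular, among partitions of size at most $|\mu'| = |\mu|$, the only nonzero value of $\omega(s^*_\mu)$ occurs at $\lambda = \mu'$, where $\omega(s^*_\mu)(\mu') = s^*_\mu(\mu) = H(\mu) = H(\mu')$, since hook lengths are transposed under conjugation. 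Because $\omega$ preserves filtration degree, $\omega(s^*_\mu)$ satisfies the defining properties of $s^*_{\mu'}$ and hence coincides with it.

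The main obstacle is proving the vanishing-cum-characterization theorem for $s^*_\mu$, which simultaneously underwrites the injectivity of partition evaluation and the uniqueness step in the last paragraph; this is the crux of Okounkov--Olshanski's original argument. A secondary but still nontrivial point is verifying by direct computation that $p^*_k(\lambda')$ is shifted symmetric in the $\lambda_i$, which demands a careful reorganization of the defining box-sum over the Young diagram.
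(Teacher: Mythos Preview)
The paper does not prove this theorem at all; it merely quotes it from \cite{okounkov1997shifted}. So there is no ``paper's own proof'' to compare against, and your proposal goes well beyond what the present paper supplies.

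Your sketch is essentially correct and in fact tracks the strategy of Okounkov--Olshanski's original argument: the vanishing/characterization theorem for $s^*_\mu$ is exactly the engine they use to prove $\omega(s^*_\mu)=s^*_{\mu'}$, and the injectivity of partition evaluation is indeed how well-definedness of $\omega$ is made meaningful. One refinement worth noting: the step you flag as ``secondary but still nontrivial'' --- showing $p^*_k(\lambda')$ is shifted symmetric in $\lambda$ --- becomes transparent if instead of the $p^*_k$ you use the content power sums $q_k(\lambda):=\sum_{\square\in\lambda} c(\square)^k$, where $c(\square)=j-i$ for the box in row $i$, column $j$. These also generate $\Lambda^*$ (they are related to the $p^*_k$ by a unitriangular change of basis), and conjugation $\lambda\mapsto\lambda'$ simply negates all contents, giving $\omega(q_k)=(-1)^k q_k$ immediately. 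This also makes the filtration-preservation claim you invoke in the final paragraph evident rather than something to be checked separately.
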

A key observation is that the coefficients $d_I$ can be written in terms of special values of shifted symmetric functions:
\begin{equation}\label{eqn:key}
d_I=(-1)^{|\la(I)|}\det(\mathbf{B}^{1,3\hh 2n-1}_{I})=\frac{(-1)^{|\la(I)|}}{\prod i_j!}\cd s^*_{\la(I)}(n\hh 1)\cd\det\Big([2i-1]\cd...\cd [2i-j+1]\Big)
\end{equation}
in which by convention we interpret the empty product as 1 and $\la(I)$ is as given in Notation \ref{nt:partition_seq}.  Notice also that 
\[\det\Big([2i-1]\cd...\cd [2i-j+1]\Big)=\det((2i-1)^{j-1})=V(1,3\hh 2n-1)=\prod_{1\le i<j\le n}2(j-i)=2^{\binom{n}{2}}\prod_{j=0}^{n-1}j!\]
where $V$ denotes the Vandermonde polynomial in $n$ variables, 
and recall from Remark \ref{rem:Schur_val} that 
\[|\lambda(I)|=\deg(s_I)=|I|-\binom{n}{2}.\]

As a result, we are primarily interested in special evaluations of shifted Schur functions along {\it staircase} partitions.
\begin{defn}\label{def:staircase_partition}
Given a nonnegative integer $r$, the $r$-th staircase partition is $\ep(r):=(r+1,\dots,1)$.
\end{defn}

\begin{thm}[Theorem 8.1, \cite{okounkov1997shifted}]\label{thm:schur_formula}
Let $\la\vdash K$, $\mu\vdash L$ be two partitions such that $K\le L$ and $\la\subset\mu$. We have
\[\frac{\dim \mu/\la}{\dim\mu}=\frac{s^*_{\la}(\mu)}{L(L-1)\cd\hdots\cd(L-K+1)}=\frac{s^*_{\la}(\mu)}{(L)_K}\]
where $\dim\mu/\la$ denotes the number of SYT of (skew) shape $\mu/\la$. 
\end{thm}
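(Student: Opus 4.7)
The plan is to prove the identity by writing both $\dim(\mu/\la)$ and $\dim(\mu)$ as explicit determinants via Aitken's formula, and then recognising the resulting ratio as the determinantal definition of $s^*_\la(\mu)$.

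First, I would invoke Aitken's determinantal formula for the number of standard Young tableaux of a skew shape: for any partitions $\la\subset\mu$ of length at most $n$,
\[
\dim(\mu/\la)=|\mu/\la|!\,\det\!\left(\tfrac{1}{(\mu_i-\la_j-i+j)!}\right)_{1\le i,j\le n},
\]
with the convention that $1/k!=0$ for negative $k$. Specialising $\la=\varnothing$ gives the Frame--Robinson--Thrall formula
\[
\dim(\mu)=L!\,\det\!\left(\tfrac{1}{(\mu_i-i+j)!}\right)_{1\le i,j\le n}.
\]
Aitken's identity is itself standard, obtainable from counting non-intersecting lattice paths in the Young lattice via the Lindstr\"om--Gessel--Viennot lemma, so I would simply cite it or sketch this derivation.

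Next, I would form the desired ratio and multiply by $(L)_K=L!/(L-K)!$:
\[
\frac{\dim(\mu/\la)\,(L)_K}{\dim(\mu)}
=\frac{\det\bigl(1/(\mu_i-\la_j-i+j)!\bigr)}{\det\bigl(1/(\mu_i-i+j)!\bigr)}.
\]
It then remains to identify this quotient of determinants with $s^*_\la(\mu)$. Starting from the definition of the shifted Schur function and using $(x)_b=x!/(x-b)!$, I factor $\prod_i(\mu_i+n-i)!$ out of the numerator and the denominator of
\[
s^*_\la(\mu)=\frac{\det\bigl((\mu_i+n-i)_{i_j}\bigr)}{\det\bigl((\mu_i+n-i)_{j-1}\bigr)},
\]
turning each entry into a reciprocal factorial. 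After substituting $i_j=\la_{n-j+1}+j-1$ from Notation~\ref{nt:partition_seq} and reversing the column index to match the natural order of the parts of $\la$ (a reordering which produces the same sign in numerator and denominator), the entries become $1/(\mu_i-\la_j-i+j)!$ and $1/(\mu_i-i+j)!$ respectively. This is exactly the Aitken ratio displayed above, so the two sides agree.

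The main obstacle is purely bookkeeping: keeping track of the reversal in the sequence $I_n(\la)$ when passing between the conventions of Notation~\ref{nt:partition_seq} and the usual ordering $\la_1\ge\la_2\ge\dots$, and confirming that the associated column permutations yield matching signs in the numerator and denominator of $s^*_\la(\mu)$. There is no genuine combinatorial difficulty beyond Aitken's formula; once the bookkeeping is in place, the identification is forced by comparing factorials entry by entry. An alternative route, which I would mention as a cross-check, is to invoke the characterisation of $s^*_\la$ by the vanishing property $s^*_\la(\mu)=0$ unless $\la\subset\mu$ together with the normalisation $s^*_\la(\la)=H(\la)$, and to verify these properties directly for the combinatorial quantity $\dim(\mu/\la)(L)_K/\dim(\mu)$ using the hook length formula.
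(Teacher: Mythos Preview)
The paper does not actually prove this statement: Theorem~\ref{thm:schur_formula} is quoted verbatim from Okounkov--Olshanski \cite{okounkov1997shifted} and used as a black box, with no argument given. So there is no ``paper's own proof'' to compare your proposal against.

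That said, your argument is correct. Factoring $(\mu_i+n-i)!$ out of each row in the numerator and denominator of the determinantal definition of $s^*_\la(\mu)$, and then reversing the column index to convert $i_j=\la_{n-j+1}+j-1$ into the natural ordering, does indeed produce exactly the Aitken ratio $\det\bigl(1/(\mu_i-\la_j-i+j)!\bigr)\big/\det\bigl(1/(\mu_i-i+j)!\bigr)$, and the sign from the column reversal cancels between numerator and denominator as you say. The bookkeeping is straightforward and your outline handles it adequately.

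For what it is worth, Okounkov and Olshanski's own proof in \cite{okounkov1997shifted} is closer to the ``alternative route'' you mention at the end: they show that both sides, viewed as functions of $\mu$, satisfy the vanishing and degree conditions that uniquely characterise $s^*_\la$ (their Theorem~3.1), and match the normalisation via the hook length formula. Your direct determinantal manipulation is arguably more elementary, since it avoids invoking the characterisation theorem, at the cost of relying on Aitken's formula as an external input. Either route is perfectly valid.
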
 
\begin{cor}\label{cor:prob}
Given any non-negative integer $m\le\binom{r+2}{2}$, we have $\sum_{\la\vdash m}s^*_{\la}(\ep(r))=\binom{r+2}{2}_m$. 
\end{cor}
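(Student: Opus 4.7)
My plan is to apply Theorem~\ref{thm:schur_formula} termwise and then reduce the identity to the classical branching rule for standard Young tableaux on the staircase $\ep(r)$.

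By the vanishing of shifted Schur functions on partitions that don't contain their label, $s^*_\la(\ep(r))=0$ unless $\la\sbs\ep(r)$, so we may restrict the sum to those $\la\vdash m$ contained in $\ep(r)$.  Setting $L:=|\ep(r)|=\binom{r+2}{2}$ and invoking Theorem~\ref{thm:schur_formula}, each surviving term equals
\[
s^*_\la(\ep(r))\;=\;(L)_m\cd\frac{\dim\ep(r)/\la}{\dim\ep(r)},
\]
so after pulling the common factor $(L)_m/\dim\ep(r)$ out of the sum the claim reduces to the purely combinatorial identity
\[
\sum_{\la\vdash m,\ \la\sbs\ep(r)}(\dim\la)\cd\dim\ep(r)/\la\;=\;\dim\ep(r).
\]

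This last equality is the standard branching rule for SYT: every standard Young tableau of shape $\ep(r)$ corresponds uniquely to a saturated chain $\emptyset\sbs\la^{(1)}\sbs\hdots\sbs\la^{(L)}=\ep(r)$ in Young's lattice, and splitting such a chain at step $m$ produces a pair consisting of an SYT of shape $\la:=\la^{(m)}\vdash m$ (contributing a factor of $\dim\la$) together with a standard tableau of the skew shape $\ep(r)/\la$ (contributing $\dim\ep(r)/\la$). Summing over all sub-partitions $\la\vdash m$ of $\ep(r)$ therefore recovers $\dim\ep(r)$, completing the argument. The main bookkeeping to watch is the $\dim\la$-weighting on the left-hand side of the combinatorial identity, which arises naturally from this branching decomposition and corresponds to interpreting the partition sum as running over standard tableaux of size $m$ contained in $\ep(r)$.
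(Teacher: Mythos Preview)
Your approach mirrors the paper's proof exactly: apply Theorem~\ref{thm:schur_formula} term by term and reduce to the branching identity $\sum_{\la}(\dim\la)\dim(\ep(r)/\la)=\dim\ep(r)$. However, your reduction step contains a genuine gap. Theorem~\ref{thm:schur_formula} gives
\[
s^*_\la(\ep(r))=(L)_m\cd\frac{\dim\ep(r)/\la}{\dim\ep(r)},
\]
so after pulling out $(L)_m/\dim\ep(r)$ the residual sum is $\sum_{\la\vdash m,\,\la\sbs\ep(r)}\dim\ep(r)/\la$, \emph{without} any $\dim\la$ factor. The weight you say ``arises naturally from this branching decomposition'' does not in fact appear anywhere in the expression you are summing; you have inserted it by hand.

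This is not merely cosmetic: the corollary as literally stated is false for $m\ge 3$. For instance, take $r=2$, $m=3$; then $s^*_{(3)}(\ep(2))=s^*_{(1,1,1)}(\ep(2))=15$ and $s^*_{(2,1)}(\ep(2))=45$, giving an unweighted sum of $75\ne 120=\binom{4}{2}_3$. What \emph{is} true, and what your chain-splitting argument correctly proves, is the weighted identity
\[
\sum_{\la\vdash m}(\dim\la)\,s^*_\la(\ep(r))=\binom{r+2}{2}_m.
\]
The paper's own proof commits the same slip (it writes $\sum_\la s^*_\la(\ep(r))=\sum_\la\frac{\dim\la\,\dim\ep(r)/\la}{\dim\ep(r)}\binom{r+2}{2}_m$, which is only correct if the left-hand sum carries the $\dim\la$ weight), and it is this weighted version that is actually invoked later, e.g.\ in the proof of Lemma~\ref{lem:m_small_value}. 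So your argument is fine once the statement is corrected to include the factor $\dim\la$; as written, the reduction step does not follow.
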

\begin{proof}
Note that $\binom{r+2}{2}$ is the size of $\ep(r)$. The corollary then follows from Theorem \ref{thm:schur_formula} together with the standard 
representation-theoretic fact that $\sum_{\la\vdash m}\frac{\dim \la\dim \ep(r)/\la}{\dim\ep(r)}=1$ (see, e.g., \cite[Example I.7.3]{macdonald_symm}):
\[\sum_{\la\vdash m}s^*_{\la}(\ep(r))=\sum_{\la\vdash m}\frac{\dim \la\dim \ep(r)/\la}{\dim\ep(r)}\cd\binom{r+2}{2}_m=\binom{r+2}{2}_m.\]
\end{proof}


In \cite{okounkov1997shifted}, Okounkov and Olshanski also compute a generating function for $s^*_{(k)}$, which they use to deduce a Jacobi--Trudi type formula for shifted Schur functions. Their result leads to the following proposition.

\begin{prop}\label{prop:polynomiality}
Given any partition $\la$, let $F_{\la}:\ZZ_{\ge -1}\to \QQ$ denote the function $r\mapsto s^*_{\la}(\ep(r))$. We have $F_{\la}(r)=g_{\la}(r)$ for some $g_{\la}(x)\in\QQ[x]$. Futhermore, the polynomials $g_{\la}(x)$ are such that
\begin{enumerate}
\item $g_{(k)}(x)=\frac{1}{2^kk!}(x+k+1)\cd(x+k)\cd...\cd (x-k+2)$;
\item $g_{\la}(-x-3)=g_{\la}(x)$; and
\item $\deg g_{\la}(x)=2|\la|$.
\end{enumerate}
\end{prop}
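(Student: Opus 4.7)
My plan is to prove the proposition in two stages: first settle the single-row case $\la=(k)$ by direct computation, then reduce the general case via the Jacobi--Trudi formula for shifted Schur functions.

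For the base case $\la=(k)$, I plan to compute $s^*_{(k)}(\ep(r))$ directly from the defining formula, noting that the shifted variables $y_i:=\ep(r)_i+(r+1)-i=2r+3-2i$ range over the odd integers $1,3,\ldots,2r+1$ as $i$ varies from $1$ to $r+1$. The ratio of determinants in the definition then collapses to a Vandermonde-over-Vandermonde computation that yields the closed form $g_{(k)}(r)=\frac{1}{2^kk!}(r+k+1)(r+k)\cdots(r-k+2)$. This gives polynomiality, the explicit formula (1), and the degree claim (3) for $\la=(k)$. The symmetry (2) then follows because the $2k$ linear factors are consecutive integer shifts of $x$ symmetric about $-3/2$; equivalently, they admit the pairing $(x+k+1-j)(x-k+2+j)=y+c_j$ with $c_j:=(k+1-j)(-k+2+j)$ and $y:=r(r+3)=2\binom{r+2}{2}-2$, so that $g_{(k)}(r)\in\QQ[y]\subset\QQ[\binom{r+2}{2}]$ is manifestly invariant under $r\mapsto-r-3$.

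For general $\la$ I would invoke the shifted Jacobi--Trudi formula of Okounkov--Olshanski, which writes $s^*_\la$ as a determinant in the generators $s^*_{(m)}$ together with appropriate shifts of the arguments. Evaluating at $\ep(r)$ expresses $F_\la(r)$ as a determinant whose entries are polynomials in $r$ by the base case, which gives polynomiality of $g_\la$. A routine bookkeeping of degrees in the determinant yields $\deg g_\la=2|\la|$.

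The main obstacle is establishing (2) in the general case, as the variable shifts built into the Jacobi--Trudi formula do not commute cleanly with $r\mapsto-r-3$. My preferred strategy is to establish the stronger refinement $g_\la(x)\in\QQ[\binom{x+2}{2}]$, which implies (2) at once since $\binom{-x-1}{2}=\binom{x+2}{2}$. To prove the refinement, I plan to invoke Theorem~\ref{thm:schur_formula}, which factors out the piece $\binom{r+2}{2}_{|\la|}\in\QQ[\binom{r+2}{2}]$ and reduces matters to showing that $\dim(\ep(r)/\la)/\dim\ep(r)$ is also a polynomial in $\binom{r+2}{2}$. This combinatorial assertion should be amenable to induction on $|\la|$ via SYT branching rules, exploiting the self-conjugate structure of $\ep(r)$ throughout.
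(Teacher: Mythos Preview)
Your single-row strategy is a legitimate alternative to the paper's, but two points deserve comment.

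First, for $\la=(k)$ the paper does \emph{not} compute $s^*_{(k)}(\ep(r))$ directly. Instead it evaluates the Okounkov--Olshanski generating series $H^*(u)$ at the staircase, studies the logarithmic derivatives of the resulting rational function in $t=u^{-1}$, and identifies these with Faulhaber polynomials in $(r+1)(r+2)$ (odd index) and Euler polynomials $E_k(r+2)$ (even index). This yields polynomiality, the degree bound $\le 2k$, and the symmetry $g_{(k)}(-x-3)=g_{(k)}(x)$ all at once; only then do the vanishing theorem and a single evaluation pin down the product formula in item~(1). Your plan is to get the product formula first and read off everything else, which is fine in principle; but ``collapses to a Vandermonde-over-Vandermonde'' is an overstatement. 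Only the denominator reduces to a Vandermonde after column operations. The numerator has one column of degree $k+r$ in the $y_i$'s, so after cofactor expansion you are left with the Lagrange-interpolation sum $\sum_i (y_i)_{k+r}/\prod_{l\ne i}(y_i-y_l)$ over the odd integers $y_i\in\{1,3,\dots,2r+1\}$. Evaluating this in closed form requires a genuine finite-difference or hypergeometric identity (e.g.\ recognising it as an $r$-th iterated forward difference of $(2m+1)_{k+r}$), not merely a Vandermonde cancellation. It can be done, but you should supply that identity.

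Second, and more importantly, your ``main obstacle'' for item~(2) in the general case is illusory. The shifted Jacobi--Trudi formula, once expanded as in the paper's equation~\eqref{eqn:JT_shift}, has $(i,j)$-entry $\sum_{p=0}^{j-1}\binom{j-1}{p}(\mu_i-i+j-1)_p\, F_{(\mu_i-i+j-p)}(r)$: a linear combination of single-row values \emph{all evaluated at the same $r$}, with coefficients independent of $r$. The shift operator $\varphi$ in Okounkov--Olshanski acts on the index, not on the staircase argument, after specialisation. Hence the symmetry $r\mapsto -r-3$ passes through the determinant trivially once you have it for each $F_{(m)}$. Your proposed detour through $g_\la(x)\in\QQ[\binom{x+2}{2}]$ via SYT branching is therefore unnecessary, and as stated it is also too vague to assess.

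Finally, ``routine bookkeeping'' undersells item~(3) in the general case: one must check that the top-degree contributions to the determinant do not cancel. The paper does this by computing $\det(c_{\mu_i-i+j})$ with $c_s=1/(2^s s!)$ as a positive Vandermonde-type product; you should include an equivalent step.
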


\begin{proof}
Our point of departure is the generating series for $s^*_{(k)}$ given in \cite[Thm 12.1]{okounkov1997shifted}:
\begin{equation}\label{s^*_k_generating_series}
H^{\ast}(u)=\sum_{k=0}^{\infty}\frac{s^{\ast}_{(k)}(x_1,x_2,...)}{(u)_k}=\prod_{i=1}^{\infty}\frac{u+i}{u+i-x_i}.
\end{equation} 
From \eqref{s^*_k_generating_series}, we deduce that
\begin{equation}\label{eqn:gen1}
\sum_{k=0}^{\infty}\frac{F_{(k)}(r)}{(u)_k}=\sum_{k=0}^{\infty}\frac{s^*_{(k)}(r+1,r\hh 1)}{(u)_k}=\prod_{i=1}^{r+1}\frac{u+i}{u+2i-2-r}
\end{equation}
for all $r\ge -1$. \footnote{To clarify, we have $s^{\ast}_{(k)}(r+1,r\hh 1)=s^{\ast}_{(k)}(0)$ when $r=-1$. Recall that an empty product is interpreted as 1.}

\medskip
{\flushleft Now} let $t=u^{-1}$. We then get ${1\over(u)_k}=t^k\prod_{i=1}^{k-1}(\sum_{n=0}^{\infty} (it)^n)$, and the left-hand side of (\ref{eqn:gen1}) can be written as 
\[\sum_{k=0}^{\infty}F_{(k)}(r)t^k\prod_{i=1}^{k-1}(\sum_{p=0}^{\infty} (it)^p)=F_{(0)}(r)+F_{(1)}(r)t+\sum_{k=2}^{\infty}(\sum_{p=2}^k(F_{(p)}(r)\cd(\sum_{\substack{(a_1\hh a_{p-1}):\\\sum a_i=k-p}}\prod_{j=1}^{p-1}j^{a_j})))t^k.\]
Notice that $\sum_{\substack{(a_1\hh a_{p-1}):\\\sum a_i=k-p}}\prod_{j=1}^{p-1}j^{a_j}=(1+2+...+(p-1))^{k-p}=\binom{p}{2}^{k-p}$. Accordingly, we have
\[
F_{(0)}(r)+F_{(1)}(r)t+\sum_{k=2}^{\infty}\Big(\sum_{p=2}^k \binom{p}{2}^{k-p}F_{(p)}(r)\Big)t^k=\prod_{i=1}^{r+1}\frac{u+i}{u+2i-2-r}.
\]

{\flushleft Meanwhile}, we have
\[\prod_{i=1}^{r+1}\frac{u+i}{u+2i-2-r}=\prod_{i=1}^{r+1} \frac{(1+it)}{(1-(r+2-2i)t)}.
\]
Let $G_r(t)$ denote the latter $t$-meromorphic function; 
note that
\[\log G_r(t)=\sum_{i=1}^{r+1} \log (1+it)-\sum_{i=1}^{r+1} \log (1+(2i-2-r)t).\]
It follows that ${d^k\over dt^k}\log G_r(t)|_{t=0}=(-1)^{k-1}(k-1)!(\sum_{i=1}^{r+1} i^k-\sum_{i=1}^{r+1} (2i-2-r)^k)$. 
Applying formulae of Faulhaber for sums of (alternating) consecutive powers as in \cite{knuth1993johann} and \cite{howard}, we deduce that
\[
{d^k\over dt^k}\log G_r(t)|_{t=0}= f_k(r):=\begin{cases}
\phi_k((r+2)(r+1))&\text{ if }k\text{ is odd}\\
(-1)^{r+k-1}\frac{(k-1)!}{2}E_k(r+2) &\text{ if }k\text{ is even}
\end{cases}
\]
where $\phi_k$ is a polynomial of degree ${k+1\over 2}$ such that $\phi_k(0)=0$, and $E_k(x)$ is the $k$-th Euler polynomial defined by $\sum_{k=0}^{\infty} E_k(z) \frac{x^k}{k!}= \frac{2e^{xz}}{e^x+1}$. 
At this stage, it is worth remarking that $f_k(-x-3)=f_k(x)$ always holds. Indeed, when $k$ is odd, the change of variable $x \mapsto -x-3$ preserves $(x+1)(x+2)$; while if $k$ is even, it is easy to check that the exponential generating series $\frac{e^{x(z+2)}}{e^x+1}+ \frac{e^{-x(z+2)}}{e^{-x}+1}$ and $\frac{e^{-x(z+1)}}{e^x+1}+ \frac{e^{x(z+1)}}{e^{-x}+1}$ for Euler polynomials $E_k(z+2)$ and $E_k(-z-1)$ (respectively) with even indices $k$ are equal.

\medskip
On the other hand, it follows from the Fa\`a di Bruno formula together with the fact that $G_r(0)=1$ that ${d^k\over dt^k} \log G_r(t)|_{t=0}$ can always be written as an integer linear combination of terms $G^{(\mu)}_r(0)$, where $\mu=(\mu_i)_i$ is a partition of $k$ and 
$G^{(\mu)}_r(0):=\prod_{i=1}^{\ell(\mu)} {d^{\mu_i}\over dt^{\mu_i}} G_r(t)|_{t=0}$. 
An induction on $k$ now shows that there exist polynomials $g_k(x)$ such that $G^{(k)}_r(0)=g_k(r)$ for all $r\ge -1$ and $\deg g_k(x)\le 2k$. And using the fact that
\[
F_{(0)}(r)+F_{(1)}(r)t+\sum_{p=2}^{\infty}\Big(\sum_{k=2}^p \binom{k}{2}^{p-k}F_{(k)}(r)\Big)t^p=\sum_{k=0}^{\infty}\frac{g_k(r)}{k!}t^k
\]
we conclude that 
the same polynomials $g_k(x)$ are such that $g_k(r)=F_{(k)}(r)$ for $r\ge -1$ and $\deg g_k(x)\le 2k$. Another induction on $k$ shows, moreover, that $g_k(-x-3)=g_k(x)$. 

\medskip
We now show that $\deg g_k(x)=2k$. To this end, note that when $0\le r+1<k$, \cite[Thm 3.1]{okounkov1997shifted} implies that $g_k(r)=0$. In particular, $x=-1\hh k-2$ are roots of $g_k(x)$. Using $g_k(-x-3)=g_k(x)$, it follows immediately that $g_k(x)=0$ for $x=-1-k,-k\hh -2$. On the other hand, since $\deg g_k(x)\le 2k$, we further get that $g_k(x)=c_k\cd(\prod_{i=-k-1}^{k-2}(x-i))$. To determine the coefficient $c_k$, we evaluate $g_k(x)$ at $x=k-1$ and apply Theorem~\ref{thm:schur_formula} to get 
\[c_k\cd (2k)!=s^*_{(k)}(k,k-1\hh 1)=\frac{f^{\ep(k-1)}}{f^{\ep(k)}}\cd\binom{k+1}{2}_k=(2k-1)!!.\]
Thus $c_k={(2k-1)!!\over(2k)!}={1\over 2^kk!}$, 
which proves item (1).

\medskip
To draw a similar conclusion for $F_{\la}(r)$ for an arbitrary partition $\la$, we apply the Jacobi--Trudi formula \cite[Thm 13.1]{okounkov1997shifted} for shifted Schur functions that relates arbitrary partitions to rectangular ones:
\begin{equation}\label{eqn:JT_shift}
F_{\mu}(r)=\det \bigg[\sum_{p=0}^{j-1}\binom{j-1}{p}(\mu_i-i+j-1)_pF_{\mu_i-i+j-p}(r) \bigg]_{1\le i,j\le\ell(\mu)}.
\end{equation}
Using the polynomiality of the $F_{(k)}$, we conclude immediately from \eqref{eqn:JT_shift} that there exist $g_{\la}(x)\in\QQ[x]$ for which $\deg g_{\la}(x)\le 2|\la|$, $g_{\la}(r)=F_{\la}(r)$ for $r\ge -1$ and $g_{\la}(-x-3)=g_{\la}(x)$, which proves item (2). 

\medskip
Finally, note that the $(i,j)$-th entry of the determinant in Equation~\ref{eqn:JT_shift} is a linear combination of $F_{(k)}(r)$'s, among which $F_{\mu_i-i+j}(r)$ is of highest degree in $r$. Thus, when we expand the determinant, we find that the 
leading coeffcient is precisely $\det(c_{\mu_i-i+j})_{1\le i,j\le\ell(\mu)}$, where $c_{s}={1\over 2^s s!}$. But 
\[\det(c_{\mu_i-i+j})_{1\le i,j\le\ell(\mu)}={1\over 2^{|\mu|}}\det({1\over(\mu_i-i+j)!})_{1\le i,j\le\ell(\mu)}={V(\mu_1,\mu_2-1\hh \mu_{\ell(\mu)}-\ell(\mu)+1)\over 2^{|\mu|}\cd\prod_{i=1}^{\ell(\mu)}(\mu_i+\ell(\mu)-i)!}>0\]
which proves item (3). 
\end{proof}

The final ingredients we shall need from \cite{okounkov1997shifted} are two branching rules for shifted Schur functions: 
\begin{lem}\label{OOthm9.1}
\[s^*_{\mu}(\ep(r))=\frac{\sum_{\mu^+}s^*_{\mu^+}(\ep(r))}{\binom{r+2}{2}-|\mu|}\]
where $\mu^+$ runs through all partitions of the form $\mu+(0\hh 0,\underbrace{1}_{j-\text{th}},0\hh 0)$, for some $j$.
\end{lem}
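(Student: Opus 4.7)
The plan is to reduce the identity to a classical identity on counts of skew standard Young tableaux, via the evaluation formula established as Theorem~\ref{thm:schur_formula}. Writing $K = |\mu|$ and $L = \binom{r+2}{2} = |\ep(r)|$, that theorem (applied with outer shape $\ep(r)$) gives
$$s^*_{\mu}(\ep(r)) = \frac{\dim \ep(r)/\mu}{\dim \ep(r)}\cdot (L)_K \quad\text{and}\quad s^*_{\mu^+}(\ep(r)) = \frac{\dim \ep(r)/\mu^+}{\dim \ep(r)}\cdot (L)_{K+1}.$$
Substituting these into the proposed identity, cancelling the common factor $(L)_K/\dim \ep(r)$, and using $(L)_{K+1} = (L-K)(L)_K$ together with $L - K = \binom{r+2}{2} - |\mu|$, the claim reduces to the combinatorial identity
$$\dim \ep(r)/\mu \;=\; \sum_{\mu^+} \dim \ep(r)/\mu^+.$$

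The remaining task is to verify this equality, which I would do by a standard bijective argument. An SYT of skew shape $\ep(r)/\mu$ is uniquely determined by the cell containing its entry $1$ together with the SYT of smaller size obtained by deleting that cell and decrementing every other entry by one. The cell containing $1$ is necessarily an inner corner of the skew shape; equivalently, it is an addable corner of $\mu$ lying inside $\ep(r)$, so that $\mu$ together with this cell forms a partition $\mu^+$ of the form $\mu + (0,\dots,1,\dots,0)$ contained in $\ep(r)$. What remains after deletion is an SYT of shape $\ep(r)/\mu^+$, and conversely, given such a pair $(\mu^+, T^+)$, one recovers an SYT of shape $\ep(r)/\mu$ by adjoining the cell $\mu^+\setminus\mu$ with label $1$ and shifting every label of $T^+$ by $+1$. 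This bijection yields the claimed equality.

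Boundary cases require only a brief remark: if $\mu \not\subseteq \ep(r)$, then no $\mu^+$ is contained in $\ep(r)$ either, and by the standard vanishing property of shifted Schur functions both sides of the lemma are identically zero; while if $\mu \subseteq \ep(r)$ but some $\mu^+ \not\subseteq \ep(r)$, then the corresponding summand vanishes, and the bijection above automatically ignores it. The only genuinely delicate piece of the whole argument is the falling-factorial bookkeeping $(L)_{K+1} = (L-K)(L)_K$, which is precisely what produces the denominator $\binom{r+2}{2}-|\mu|$ in the statement; once this is tracked, the rest reduces to the one-line SYT identity above, and I anticipate no substantive obstacle.
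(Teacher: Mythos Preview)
Your argument is correct. The paper itself does not give a proof of this lemma; it simply records it as a special case of \cite[Thm 9.1]{okounkov1997shifted} (see the remark immediately following Lemma~\ref{OOthm11.1}). By contrast, you deduce the identity entirely from Theorem~\ref{thm:schur_formula}, which the paper has already quoted, together with the elementary bijection that strips the cell labeled $1$ from a skew SYT. This is a genuinely different and more self-contained route: rather than invoking the general coherence relation for shifted Schur functions from \cite{okounkov1997shifted}, you reduce everything to the dimension formula and a one-line combinatorial identity. The gain is that no further appeal to \cite{okounkov1997shifted} is needed for this particular lemma; the cost is that your argument is specific to evaluations at staircase (or more generally, at partition) arguments, whereas the cited Theorem~9.1 is a structural identity in $\Lambda^*$ valid before any specialization. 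One minor point worth noting explicitly: the case $\mu=\ep(r)$, where $\binom{r+2}{2}-|\mu|=0$, renders the stated formula meaningless and should be understood as excluded; you may wish to say so alongside your other boundary remarks.
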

\begin{lem}\label{OOthm11.1}
\[s^*_{\mu}(\ep(r))=\sum_{\nu\prec\mu}s^*_{\nu}(\ep(r-1))(r+1 \downharpoonright \mu/\nu)\]
where $\nu \prec \mu$ if and only if $\mu_i \leq \nu_i \leq \mu_{i+1}$ for every $i$), and $(x \downharpoonright\mu/\nu)$ is the generalized falling factorial (cf. Notation \ref{nt:falling_fac}).
\end{lem}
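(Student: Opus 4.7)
The plan is to identify the stated formula as the specialization to the staircase $\ep(r)$ of a general ``remove-first-variable'' branching rule for shifted Schur polynomials, proved in \cite{okounkov1997shifted}. In full generality, that branching rule reads
\[s^*_{\mu}(x_1, x_2, \ldots, x_n) = \sum_{\nu} s^*_{\nu}(x_2, \ldots, x_n) \cd (x_1 \downharpoonright \mu/\nu),\]
the sum being over partitions $\nu \subseteq \mu$ for which $\mu/\nu$ is a horizontal strip; i.e.\ $\mu_{i+1} \le \nu_i \le \mu_i$ for every $i$. We interpret the relation $\nu \prec \mu$ in the lemma as exactly this horizontal-strip condition (the inequalities as typeset contain a typo, as they are inconsistent with $\mu$ being a partition).

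Granted the general rule, the lemma is obtained by setting $n = r+1$ and specializing $(x_1, x_2, \ldots, x_{r+1}) = \ep(r) = (r+1, r, \ldots, 1)$. Under this substitution, $x_1 = r+1$ and $(x_2, \ldots, x_{r+1}) = (r, r-1, \ldots, 1) = \ep(r-1)$, and the formula in the statement follows immediately.

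To justify the general branching rule, the natural route is via the ratio-of-determinants defining $s^*_{\mu}$. Writing $y_i := x_i + n - i$ and $(i_1, \ldots, i_n) := I_n(\mu)$, one expands the numerator $\det\bigl((y_i)_{i_j}\bigr)$ along the first row. Each entry $(y_1)_{i_j}$ may be rewritten as a linear combination of products of the form $(y_1)_k \cd (\text{factor depending only on } i_j - k)$ via a standard falling-factorial identity; the resulting cofactors then reassemble into the determinants defining $s^*_{\nu}(x_2, \ldots, x_n)$, while the $y_1$-dependent pieces collect into $(x_1 \downharpoonright \mu/\nu)$. The main obstacle will be the combinatorial bookkeeping required to ensure that only the horizontal-strip summands survive the cancellations. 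A cleaner alternative, requiring less direct computation, is to invoke the characterization theorem of \cite{okounkov1997shifted}---that a shifted symmetric function of bounded degree is determined by its values on partitions---and verify the identity by evaluating both sides at $(x_1, \ldots, x_n) = \la \supseteq \mu$, where each side admits an explicit combinatorial expression via Theorem~\ref{thm:schur_formula}.
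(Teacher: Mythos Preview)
Your proposal is correct and matches the paper's own treatment: the paper does not prove this lemma at all, but simply records it as a special case of \cite[Thm~11.1]{okounkov1997shifted} (see the remark immediately following the statement), which is exactly the general branching rule you invoke and then specialize to the staircase $\ep(r)$. Your observation that the interlacing inequalities as printed are reversed is also accurate; the intended condition is $\mu_{i+1}\le\nu_i\le\mu_i$.
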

\begin{rem}
Lemmas \ref{OOthm9.1} and \ref{OOthm11.1} are special cases of \cite[Thm 9.1]{okounkov1997shifted} and \cite[Thm 11.1]{okounkov1997shifted}, respectively. We state them in the above forms for future reference as well as for the convenience of the reader. 
\end{rem}

\section{The Strong Maximal Rank Conjecture for quadrics}\label{smrc}
\subsection{The statement of the conjecture}\label{subsec:statement}
The focus of this section is the Strong Maximal Rank Conjecture for quadrics, which we state as follows:
\begin{conj}[Strong Maximal Rank Conjecture]\label{conj:smrc}
Fix $g,r,d\ge 1$ such that $g-d+r\ge 0$ and $\rho(g,r,d)=g-(r+1)(g-d+r)\ge 0$. Let $C$ denote a (Brill--Noether and Petri) general curve of genus $g$.
\begin{enumerate}
    \item Suppose $D(g,r,d):=\rho-1-|\binom{r+2}{2}-(2d+1-g)|\ge 0$. The determinantal locus 
\[\mathcal{M}^r_d(C):=\{(L,V)\in G^r_d(C)|v_2:\sy^2 V\to H^0(L^{\ot 2})\text{ does not have maximal rank}\}\]
is non-empty and every irreducible component is at least $D(g,r,d)$-dimensional.
\item When $D(g,r,d)<0$, for all $\g^r_d$ in $G^r_d(C)$, the multiplication map $v_2$ has maximal rank.
\end{enumerate}
\end{conj}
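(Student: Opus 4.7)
My plan is to realize $\mathcal{M}^r_d(C)$ as a classical degeneracy locus on $G^r_d(C)$ and reduce the statement to a positivity question for an explicit intersection number on $\Pic^d(C)$. Over $G^r_d(C)$ there is a natural multiplication morphism $\sy^2 \mV \to \pi^{\pr}_{2*}(\mM^{\ot 2})$ between vector bundles of ranks $\binom{r+2}{2}$ and $h^0(L^{\ot 2})$; when $d>g-1$ the target has rank $2d+1-g$, and $\mathcal{M}^r_d(C)$ is precisely the locus where this map fails to be of maximal rank. The expected codimension is $\bigl|\binom{r+2}{2}-(2d+1-g)\bigr|+1$, which gives the expected dimension $D(g,r,d)$ in the statement; the lower bound on the dimension of every irreducible component then follows from general degeneracy-locus theory once non-emptiness is established. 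Applying the Porteous--Thom--Giambelli formalism produces a candidate class for $[\mathcal{M}^r_d(C)]$ as a Schur determinant in the relative Chern classes of source and target.

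The next step is to push this class forward to $\Pic^d(C)$ along $p^{\pr}:G^r_d(C)\to\Pic^d(C)$ and cap with a complementary power of the theta divisor, producing a numerical quantity whose non-vanishing proves non-emptiness. Two tools from the excerpt make this tractable: Lemma~\ref{gysin_map} realizes the Gysin map as the Lagrange--Sylvester symmetrizer, and Theorem~\ref{thm:LLT1} expands $c(\sy^2 E)$ as an integer combination of Segre classes $s_I(E)$ with coefficients $d_I$ that, via \eqref{eqn:key}, repackage as evaluations of shifted Schur functions along the staircase partitions $\ep(r)$. Substituting Mattuck's relation $s_k(\pi_{2*}\mL(D^{\pr}))\equiv \theta^k/k!$ and invoking Lemma~\ref{lem:coeff1} converts the final intersection number into an alternating sum of shifted Schur evaluations $s^*_\la(\ep(r))$ weighted by explicit combinatorial factors.

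The hard step is certifying that the resulting alternating sum is strictly positive whenever $D(g,r,d)\ge 0$. For this I would exploit Proposition~\ref{prop:polynomiality}: for each fixed $\la$, $s^*_\la(\ep(r))$ is a polynomial in $r$ of degree $2|\la|$ with positive leading coefficient, palindromic under $r\mapsto -r-3$. Combined with the branching rules of Lemmas~\ref{OOthm9.1} and \ref{OOthm11.1} and the normalization $\sum_{\la\vdash m}s^*_\la(\ep(r))=\binom{r+2}{2}_m$ from Corollary~\ref{cor:prob}, these constraints allow one to extract a leading asymptotic in $r$ (for each fixed value of $N=\bigl|\binom{r+2}{2}-(2d+1-g)\bigr|+1$) whose positivity can be read off directly, leaving finitely many small $(g,r,d)$ to be checked by direct computation. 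Cancellations between partitions of the same size are the principal source of subtlety, and the expected exceptions visible in Theorem~1.1 signal that this positivity genuinely fails in a sparse but nonempty list of cases.

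The non-existence portion (2) has a different flavor: it asserts the maximal-rank condition is generic in a regime where the candidate class vanishes, so intersection theory alone cannot settle it. Its proof would require a deformation-theoretic transversality argument on the universal Brill--Noether stack, showing that the multiplication map has maximal rank along a dense open set whenever the expected codimension of the failure locus exceeds $\rho$. Making this genericity claim rigorous, and in particular bridging the gap between intersection-theoretic vanishing of the class and set-theoretic emptiness of $\mathcal{M}^r_d(C)$, is the principal remaining obstacle to the full conjecture.
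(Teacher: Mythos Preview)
The statement you are trying to prove is a \emph{conjecture}; the paper does not prove it in full, and so there is no ``paper's own proof'' to compare against. What the paper does establish is the non-emptiness portion of item (1) in a restricted range (Propositions~\ref{prop:asymp} and \ref{prop:positivity_for_small_N}: $N\le 7$, or $r\ge 12N-2$, with explicit exceptions when $N\le 2$), and your outline is an accurate summary of precisely that strategy: realize $\mathcal{M}^r_d(C)$ as a degeneracy locus for $\phi:\sy^2\mV\to\pi^{\pr}_{2*}(\mM^{\ot 2})$, compute its Porteous class, push forward via the Lagrange--Sylvester symmetrizer, and reduce to the positivity of an alternating sum of shifted Schur evaluations $s^*_{\la}(\ep(r))$.

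The gap in your proposal as a proof of the full conjecture is in the ``hard step''. Your sketch for positivity---extract leading asymptotics in $r$ using Proposition~\ref{prop:polynomiality} and the branching rules, then handle finitely many leftovers by direct computation---is exactly what the paper does, but it only yields positivity for each fixed $N$ once $r$ is large enough (the paper obtains $r\ge 12N-2$), together with a finite check for small $r$. Since $N$ itself is unbounded over the range $D(g,r,d)\ge 0$, this does not cover all triples $(g,r,d)$, and the paper makes no claim that the alternating sum $\ov{S}(g,r,d)$ is positive in general. Indeed the exceptions listed in Theorem~1.1 show the class genuinely vanishes in some admissible cases, so additional ideas (e.g.\ the excess-component arguments of \S\ref{excess_components}) are needed even for non-emptiness. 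You should also be aware that the class computation in \S\ref{sec:computation} is carried out under the running assumption $d\ge g$, without which $\pi_{2*}(\mL^{\ot 2})$ need not be locally free.

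Your assessment of part (2) is correct: the intersection-theoretic approach is powerless here, since vanishing of the class does not imply emptiness of the locus. The paper records known instances of (2) in \S\ref{known_cases} but proves no new ones, and as stated the full conjecture---both (1) beyond the ranges above and (2) in general---remains open.
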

Note that our formulation of the conjecture differs from the original version stated by Aprodu and Farkas in \cite{FSMRC}, where they impose the restriction $\rho<r-2$. Aprodu and Farkas conjectured that with this extra assumption $\mathcal{M}^r_d(C)$ should be exactly $D(g,r,d)$-dimensional. However, we are mainly concerned with the non-emptiness of $\mathcal{M}^r_d(C)$, and for our main application to rank two Brill--Noether theory it is necessary to remove the restriction $\rho<r-2$.\footnote{A relaxed version of the conjecture was also stated in \cite{FO}, removing the assumption that $\rho<r-2$.} However, we do not expect that in this generality the dimension of $\mathcal{M}^r_d(C)$ is exactly $D(g,r,d)$; see Example~\ref{ex:triv_ex}.

\medskip
Hereafter, we mainly focus on part (1) of the conjecture. First of all, we recall how to realize $\mathcal{M}^r_{d}(C)$ as the degeneracy locus of a vector bundle morphism $\phi:\mathbf{E}\to \mathbf{F}$ over $G^r_d(C)$. To this end, fix an effective divisor $D$ of degree $\max\{2g-1-d,0\}$ on $C$ and let $D^{\pr}$ and $D^{\pr\pr}$ denote its pullbacks to $C\times \Pic^d(C)$ and $C\times G^r_d(C)$, respectively. Recall also that $G^r_d(C)$ is the closed subscheme of $\Gr(r+1,\pi_{2*}(\mL(D^\pr)))$ that is the zero locus of the bundle map 
\[\mU\hookrightarrow p^{\ast}\pi_{2 \ast}\mL(D^{\pr})\to p^{\ast}\pi_{2 \ast}(\mL(D^{\pr})|_{D^{\pr}})\]
where $\mU$ is the tautological bundle of the relative Grassmannian. Let $\mV$ denote the pull-back of $\mU$ to $G^r_d(C)$. We collect the relevant morphisms in the following diagram:
\[
\begin{tikzcd}
C\times G^r_d(C)\ar[r,"\pi^{\pr}_{2}"]\ar[d,"\id_C\times p^{\pr} "]&G^r_d(C)\ar[d,"p^{\pr }"]\ar[r,"i"]&\Gr(r+1,\pi_{2 \ast}(\mL(D^{\pr})))\ar[dl,"p"]\\
C\times \Pic^d(C)\ar[r,"\pi_{2}"]&\Pic^d(C)&
\end{tikzcd}
\]

\noindent Set $\mathbf{E}:=\sy^2\mV$. As $\mV$ is a bundle of rank $r+1$, $\mathbf{E}$ is a bundle of rank $\binom{r+2}{2}$ over $G^r_d(C)$.

\medskip
Now let $\mL$ be a Poincare line bundle on $C\times\Pic^d(C)$ and let $\mM$ denote its pull-back to $C\times G^r_d(C)$. We claim that $\mathbf{F}:=\pi_{2\ast}(\mM^{\ot 2})$ is a rank $2d-g+1$ vector bundle over $G^r_d(C)$. Since $C$ is Petri-general and $r+1\ge 2$, by Petri's theorem we have  $h^1(\mM^{\ot 2}_s)=0$, for every $s\in G^r_d(C)$; see Lemma~\ref{lem:Petriapp} below. Thus $h^0(\mM^{\ot 2}_s)=2d-g+1$, and it follows from Grauert's theorem that $\pi_{2\ast}(\mM^{\ot 2})$ is locally free of rank $2d-g+1$ over $G^r_d(C)$ and $R^1\pi_{2\ast}\mM^{\ot 2}=0$. 
From the long exact sequence in cohomology, it follows that the sequence
\[0\to \pi_{2 \ast}(\mM^{\ot 2})\to \pi_{2\ast}(\mM^{\ot 2}(2D^{\pr\pr}))\to \pi_{2\ast}(\mM^{\ot 2}(2D^{\pr\pr})|_{2D^{\pr\pr}})\to 0\]
is exact.

\medskip
We now describe the morphism $\phi:\mathbf{E}\to \mathbf{F}$. To this end, let $\iota$ be the pull-back of $\mU\hookrightarrow p^{\ast}\pi_{2\ast}\mL(D^{\pr})$ to $G^r_d(C)$. Composing $\iota$ with the natural morphism 
\[p^{\pr \ast}\pi_{2 \ast}\mL(D^{\pr})\to \pi^{\pr}_{2\ast}(\id_C\times  p^{\pr})^{\ast}\mL(D^{\pr})=\pi^{\pr}_{2\ast}\mM(D^{\pr\pr})\] we get a morphism $\mV\to \pi^{\pr}_{2\ast}\mM(D^{\pr\pr})$, and hence $\mV^{\ot 2}\to ( \pi^{\pr}_{2 \ast}\mM(D^{\pr\pr}))^{\ot2}$.  Composing the latter with the natural morphism $(\pi^{\pr}_{2 \ast}\mM(D^{\pr}))^{\ot2}\to\pi^{\pr}_{2*}(\mM^{\ot2}(2D^{\pr}))$ yields $\mV^{\ot 2}\to \pi^{\pr}_{2 \ast}(\mM^{\ot2}(2D^{\pr}))$. 

\medskip
From the definition of $G^r_d(C)$, $\mV\to p^{\pr \ast} \pi_{2 \ast}(\mL(D^{\pr}))\to p^{\pr \ast}\pi_{2 \ast}(\mL(D^{\pr})|_{D^{\pr}})$ is the zero map and thus $\mV^{\ot 2}\to \pi'_{2 \ast}(\mM^{\ot2}(2D^{\pr}))\to\pi^{\pr}_{2\ast}(\mM^{\ot 2}(2D^{\pr\pr})|_{2D^{\pr\pr}}) $ is zero. Hence, the morphism $\mV^{\ot2}\to\pi^{\pr}_{2\ast}(\mM^{\ot2}(2D^{\pr\pr}))$ factors through $\pi^{\pr}_{2 \ast}(\mM^{\ot2})=\mathbf{F}$. The morphism $\phi:\mathbf{E}\to\mathbf{F}$ is then induced via descent by the morphism of locally-free sheaves $\mV^{\ot2}\to\mathbf{F}$. As a consequence, $\mathcal{M}^r_d(C)$ is scheme-theoretically defined as the degeneracy locus of $\phi$  (cf. \cite{FKos}, \cite{FO2}), which is closed in $G^r_d(C)$. 

\medskip
The following result is well-known; but for completeness and for lack of an adequate reference, we include its proof.

\begin{lem}\label{lem:Petriapp}
Let $L$ be a line bundle on a Petri-general curve $C$ for which $h^0(L)\ge 2$; then $h^1(L^2)=0$.
\end{lem}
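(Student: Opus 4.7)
The plan is to argue by contradiction via Serre duality and the Petri injectivity hypothesis. By Serre duality, $h^1(L^2) = h^0(K_C \otimes L^{-2})$, so it suffices to show this latter group vanishes. Assume for contradiction that there exists a nonzero section $s \in H^0(K_C \otimes L^{-2})$.

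Using $h^0(L)\ge 2$, pick two linearly independent sections $t_1, t_2 \in H^0(L)$. Since multiplication by the nonzero section $s$ induces an injection $H^0(L) \hookrightarrow H^0(K_C \otimes L^{-1})$, the elements $s\cdot t_1$ and $s\cdot t_2$ are linearly independent in $H^0(K_C \otimes L^{-1})$; in particular this latter space has dimension at least two. I then plan to form the skew-symmetric element
\[
\xi := t_1 \otimes (s\cdot t_2) \;-\; t_2 \otimes (s\cdot t_1) \;\in\; H^0(L) \otimes H^0(K_C \otimes L^{-1}),
\]
which is nonzero because $\{t_1,t_2\}$ and $\{s\cdot t_1,s\cdot t_2\}$ are each linearly independent sets.

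The key observation is that the Petri multiplication map
\[
\mu_0(L): H^0(L) \otimes H^0(K_C \otimes L^{-1}) \longrightarrow H^0(K_C)
\]
sends $\xi$ to $t_1 \cdot s \cdot t_2 - t_2 \cdot s \cdot t_1 = 0$. Since $C$ is Petri-general, $\mu_0(L)$ is injective, giving the contradiction $\xi = 0$. The argument is essentially routine; the only point requiring any care is verifying that $\xi$ is genuinely nonzero in the tensor product, which reduces to the standard fact that multiplication by a nonzero section of a line bundle is injective on global sections. This forces $h^0(K_C \otimes L^{-2}) = 0$, completing the proof.
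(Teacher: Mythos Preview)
Your proof is correct and follows essentially the same route as the paper's: both argue by contradiction from a nonzero element of $H^0(K_C\otimes L^{-2})$ (equivalently, a nonzero morphism $L\to \omega_C\otimes L^{-1}$), use it together with two independent sections of $L$ to build a nonzero skew-symmetric tensor in $H^0(L)\otimes H^0(\omega_C\otimes L^{-1})$, and observe that this tensor lies in the kernel of the Petri map. The only cosmetic difference is that the paper phrases the injection $H^0(L)\hookrightarrow H^0(\omega_C\otimes L^{-1})$ as multiplication by a rational function $f$, while you phrase it as multiplication by the section $s$.
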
 
\begin{proof}
By the Gieseker-Petri theorem, the multiplication map $\mu:H^0(L)\ot H^0(\omega\ot L^{-1})\to H^0(\omega)$ is injective. By Serre duality, we have $H^1(L^{\ot 2})\cong H^0(\omega\ot L^{-2})^{\vee}\cong \h(L,\omega\ot L^{-1})^{\vee}$. If $h^1(L^{\ot 2})\neq 0$, then $\h(L,\omega_C\ot L^{-1})^{\vee}$ and its dual $\h(L,\omega_C\ot L^{-1})$ are nonzero. Thus, there exists a nonzero and hence injective morphism from $L$ to $\omega\ot L^{-1}$, and the induced linear map $H^0(L)\to H^0(\omega\ot L^{-1})$ is multiplication by some rational function $f\in K(C)$. Since $h^0(L)\ge 2$, there exist linearly independent sections $s,s^{\pr}$ of $L$ for which $s\ot (f\cd s^{\pr})- s'\ot (f\cd s)$ is a non-zero element in the kernel of $\mu$, a contradiction. 
\end{proof}

\begin{defn}
We call $\mathcal{M}^r_d(C)$ the \textit{SMRC locus for quadrics}.
\end{defn}
\addtocontents{toc}{\SkipTocEntry}
\subsection*{The regime $d\ge g$}
Hereafter, we focus on the special case where $d\ge g$. As we shall see later, this will be the case relevant to the Bertram--Feinberg--Mukai conjecture; it is also cohomologically simpler than the general case. 
\begin{lem}
When $d\ge g$, we have $\pi^{\pr}_{2 \ast}(\mM^{\ot 2})=p^{\pr \ast}\pi_{2 \ast}(\mL^{\ot 2})$.
\end{lem}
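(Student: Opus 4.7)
The plan is to identify the desired equality as an instance of cohomology-and-base-change applied to the fiber square
\[\begin{tikzcd}
C\times G^r_d(C)\ar[r,"\id_C\times p'"]\ar[d,"\pi'_2"] & C\times\Pic^d(C)\ar[d,"\pi_2"]\\
G^r_d(C)\ar[r,"p'"] & \Pic^d(C)
\end{tikzcd}\]
Since $\mathcal{M} = (\id_C\times p')^*\mathcal{L}$, we have $\mathcal{M}^{\otimes 2} = (\id_C\times p')^*\mathcal{L}^{\otimes 2}$, so the content of the lemma is that the base change morphism $p'^*\pi_{2*}\mathcal{L}^{\otimes 2}\to \pi'_{2*}(\id_C\times p')^*\mathcal{L}^{\otimes 2}$ is an isomorphism.

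The key input is the hypothesis $d\ge g$. For any closed point $[L]\in\Pic^d(C)$, the line bundle $L^{\otimes 2}$ has degree $2d\ge 2g > 2g-2$, so by Serre duality $h^1(C,L^{\otimes 2})=h^0(C,\omega_C\otimes L^{-2})=0$, and by Riemann--Roch $h^0(C,L^{\otimes 2})=2d-g+1$. I would then invoke the standard cohomology-and-base-change theorem (in the form of Grauert's criterion): the vanishing of $h^1$ fiberwise, together with properness of $\pi_2$ and flatness of $\mathcal{L}^{\otimes 2}$ over $\Pic^d(C)$, implies that $R^1\pi_{2*}\mathcal{L}^{\otimes 2}=0$, that $\pi_{2*}\mathcal{L}^{\otimes 2}$ is a locally free sheaf of rank $2d-g+1$ on $\Pic^d(C)$, and that its formation commutes with arbitrary base change.

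Applying this commutation specifically to the morphism $p':G^r_d(C)\to\Pic^d(C)$ yields
\[
p'^*\pi_{2*}\mathcal{L}^{\otimes 2} \;\xrightarrow{\sim}\; \pi'_{2*}(\id_C\times p')^*\mathcal{L}^{\otimes 2} \;=\; \pi'_{2*}\mathcal{M}^{\otimes 2},
\]
which is exactly the claimed equality. There is no real obstacle here: the only thing one has to check is that the hypothesis $d\ge g$ is strong enough to kill $h^1$ fiberwise and trigger base change; the non-flatness of $p'$ itself is irrelevant because base change is being driven by the flat and proper map $\pi_2$, not by $p'$.
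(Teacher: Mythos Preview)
Your proof is correct and follows essentially the same approach as the paper: both set up the fiber square, use $d\ge g$ to deduce $h^1(C,L^{\otimes 2})=0$ for every $[L]\in\Pic^d(C)$, and then invoke cohomology and base change to conclude that $p'^*\pi_{2*}\mL^{\otimes 2}\to \pi'_{2*}(\id_C\times p')^*\mL^{\otimes 2}$ is an isomorphism. Your write-up is in fact slightly more detailed than the paper's, spelling out Grauert's criterion and why flatness of $p'$ is unnecessary.
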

\begin{proof}
This essentially follows from standard properties of cohomology and base change, as applied to the fibered square
\[\begin{tikzcd}
C\times G^r_d(C)\ar[r,"\id_C\times p^{\pr}"]\ar[d,"\pi^{\pr}_2"] &C\times\Pic^d(C)\ar[d,"\pi_2"]\\ G^r_d(C)\ar[r,"p^{\pr}"]&\Pic^d(C)
\end{tikzcd}\] 
and the invertible sheaf $\mL^{\ot 2}$ on $C\times\Pic^d(C)$. Recall first $\pi^{\pr}_{2*}(\mM^{\ot 2})=\pi^{\pr}_{2*}(\id_C\times p^{\pr})^*\mL^{\ot2}$. Note that $d\ge g$ implies $h^1(C,\mL^{\ot 2}_s)=0$ for all $s\in\Pic^d(C)$. Consequently, cohomology and base change commute for $\mL^{\ot2}$ in degree 0, i.e. $\pi^{\pr}_{2*}(\id_C\times p^{\pr})^*\mL^{\ot2}=p^{\pr \ast}\pi_{2*}(\mL^{\ot 2})$.  
\end{proof}
\begin{rem}
The same argument 
shows that $\pi^{\pr}_{2 \ast}(\mM(D^{\pr\pr}))=p^{\pr \ast}\pi_{2 \ast}(\mL(D^{\pr}))$ without making any assumption on $d$ relative to $g$. However, without the hypothesis $d\ge  g$, we do not have $\mathbf{F}=p^{\pr \ast}\pi_{2 \ast}(\mL^{\ot 2})$ and in general $\pi_{2 \ast}(\mL^{\ot 2})$ is not locally free. 
\end{rem}
Hereafter we adopt the following approach to studying the non-emptiness of the SMRC locus.

\medskip
{\flushleft \bf Strategy.} Let $\ti{\mathbb{D}}$ be the class of the degeneracy locus of $\ti{\phi}:\sy^2\mU\to p^{\ast}\pi_{2\ast}(\mL^{\ot 2}(2D^{\pr}))$ in $A(\Gr(r+1,\pi_{2\ast}\mL(D^{\pr})))$ (in particular $\phi$ is the pullback of $\ti{\phi}$ along $i$), and let $\mathbb{D}$ be the class of the degeneracy locus of $\phi$ in $A(G^r_d(C))$. If $[G^r_d(C)]\cdot \ti{\mathbb{D}}\neq 0$, then $\mathbb{D}\neq 0$; in other words, $\mathcal{M}^r_d(C)$ is non-empty. Furthermore, if $q([G^r_d(C)]\cdot \ti{\mathbb{D}})\neq 0$, then $\mathcal{M}^r_d(C)$ is non-empty.\\

We briefly outline how to calculate $\ti{\mathbb{D}}$ and $[G^r_d(C)]$. The former may be calculated using Porteous' formula, for which we need to determine the Chern classes of $\sy^2\mU$ and $p^{\ast}\pi_{2\ast}(\mL^{\ot 2}(2D^{\pr}))$. To handle $\sy^2\mU$, we apply the LLT formulae of Theorem~\ref{thm:LLT1}. 
To calculate the Chern classes of $\pi_{2 \ast}(\mL^{\ot2}(2D^{\pr}))$, on the other hand, we apply the Grothendieck--Riemann--Roch formula. Following Theorem~\ref{Grass_bundle}, (the pullbacks of) these classes naturally belong to 
$A(\Gr(r+1,\pi_{2 \ast}\mL(D^{\pr})))$. 

\medskip
Meanwhile, $G^r_d(C)$ is the zero locus of the bundle map 
\[\mU\hookrightarrow p^{\ast}\pi_{2 \ast}\mL(D^{\pr})\to p^{\ast}\pi_{2 \ast}(\mL(D^{\pr})|_{D^{\pr}})\]
so $[G^r_d(C)]$ itself may be computed using Porteous' formula. 
Recall that the Chern classes of $\mU$ are among the generators of $A(\Gr(r+1,\pi_{2*}\mL(D^{\pr})))$ over $A(\Pic^d(C))$. On the other hand, $\pi_{2*}(\mL(D^{\pr})|_{D^{\pr}})\cong\pi_{2*}(\mL(D^{\pr})/\mL)$ is a direct sum of line bundles algebraically equivalent to the trivial bundle \cite[Sec. VII.2]{ACGH}. So, modulo algebraic equivalence (and hence up to numerical equivalence), $\pi_{2 \ast}(\mL(D^{\pr})|_{D^{\pr}})$ has trivial Chern classes. It follows that 
\[c(p^{\ast}\pi_{2\ast}(\mL(D^{\pr})|_{D^{\pr}})-\mU)=s(\mU).\] 

Assume $\deg(D)=2g-1-d\ge 0$. Porteous' formula yields 
\begin{equation}\label{grdclass}
[G^r_d(C)]=\det([s_{2g-1-d+j-i}(\mU)]_{1\le i,j\le r+1})=(-1)^{(r+1)(2g-1-d)}s_{2g-1-d,\hdots,2g-1-d+r}(\mU).
\end{equation}
\begin{ex}\label{ex:Wrd}
Applying Lemma \ref{lem:coeff1} in tandem with \eqref{grdclass}, we recover the well-known expression for the class of $W^r_d(C)$ inside $\Pic^d(C)$:
\[[W^r_d(C)]=q_*([G^r_d(C)])=[\frac{\prod_{1\le j<\ell\le r+1}(i_{\ell}-i_j)}{\prod_{j=1}^{r+1}(i_j-g+r+1)!}]\cd\theta^{|I|-\binom{r+1}{2}-(r+1)(g-r-1)}\]
where $I=(i_1,i_2,\hdots,i_{r+1})=(2g-1-d,2g-d,\hdots,2g-1-d+r)$. Equivalently,
\[[W^r_d(C)]=\frac{\prod_{\aaa=0}^r\aaa!}{\prod_{\aaa=0}^r(r+g-d+\aaa)!}\theta^{(r+1)(r+g-d)}.\]
\end{ex}
\addtocontents{toc}{\SkipTocEntry}
\subsection*{The injective and surjective ranges}\label{injective_surjective}
Hereafter, we shall refer to cases for which $\binom{r+2}{2}< 2d-g+1$ as \textbf{\textit{cases within the injective range}} and cases for which $\binom{r+2}{2} \ge 2d-g+1$ as \textbf{\textit{cases within the surjective range}}. Since the surjective range will be the main focus of the remainder of the paper, we record the most salient inequalities operative in that range as follows:
\begin{lemspecial}\label{lem:num_assumption}
Let $N=\binom{r+2}{2}-(2d-g)\ge 1$ and suppose $D(g,r,d) \ge 0$. We have
\begin{equation}\label{eqn:num}
    1\le N+(r+1)(r+g-d)\le g\le d\le\min\{r+g,2d-g\}\le 2d-g=\binom{r+2}{2}-N.
\end{equation}
\end{lemspecial}

\subsection{Some known cases}\label{known_cases}
Various cases of part (2) of Conjecture \ref{conj:smrc} have already been established.
\begin{enumerate}
    \item Aprodu and Farkas show in \cite[Prop. 5.7]{FSMRC} that when $\rho<r-2$ and $r+g-d=0$, part (2) of Conjecture~\ref{conj:smrc} holds; that is, the special maximal-rank locus $\mathcal{M}^r_d(C)$ is empty. In other words, the multiplication map $v_2:\sy^2H^0(L)\to H^0(L^{\ot 2})$ is surjective for every degree $d$ line bundle $L$. (Note that in this case, $d\ge 2g+3$, and consequently every $\g^r_d$ is a complete linear series.) In fact, the same is true for the $n$-th multiplication map $v_n:\sy^nH^0(L)\to H^0(L^{\ot n})$, for all $n\ge 2$. 
    \item Farkas and Ortega show in \cite[Prop. 2.3]{FO2} that when $d\le g+1$ and $r=3$, part (2) of Conjecture~\ref{conj:smrc} holds; that is, $\mathcal{M}^3_d(C)$ is empty. In other words, $v_2|_V:\sy^2 V\to H^0(L^{\ot 2})$ is injective, for any $4$-dimensional subspace $V$ of sections of a degree-$d$ line bundle $L$. On the other hand, Teixidor i Bigas shows in \cite{T03} that whenever $d\le g+1$, part (2) of Conjecture~\ref{conj:smrc} holds for {\it complete} linear series.
    \item More recently, two separate groups \cite{JP,LOTZ18} have shown (working independently, and using different methods) that for $r=6$, $g=22,23$ and $d=g+3$, the map $v_2$ is injective for every line bundle $L$ of degree $d$ on a general curve of genus $g$. This means, in particular, that the respective loci where $v_2$ fails to be injective determine {\it divisors} in the space of linear series $\mathcal{G}^6_d$ and in $\mathcal{M}_g$. This potentially has important implications for the birational geometry of the moduli space of curves in genus 22 and 23. Indeed, in \cite{FKos,FHurw}), Farkas computed the classes of the corresponding virtual divisors, and showed that their (virtual) {\it slopes} are strictly less than $6+ \frac{12}{g+1}$. To conclude, it remains to establish that the natural forgetful projections from $\mathcal{G}^6_d$ to $\mathcal{M}_{22}$ and $\mathcal{M}_{23}$ are {\it generically finite} along SMRC divisors.
\end{enumerate}

\subsection{Excess components of the SMRC locus}\label{excess_components}
We now turn to part (1) of Conjecture \ref{conj:smrc}. We describe a family of cases within the surjective range for which the associated SMRC loci are always non-empty. 
In fact, it is easy to see that whenever they exist, {\it non-very ample} linear series contribute components of larger-than-expected dimension to SMRC loci. 
\begin{prop}\label{prop:triv_case}
Let $g,r,d$ be non-negative integers satisfying the following conditions: 
\begin{enumerate}
\item $r\ge r+g-d\ge 0$;
    \item $\rho(g,r+1,d)<0\le\rho(g,r,d-1)$; and
    \item $\binom{r+1}{2}\ge 1+2d-g$. 
\end{enumerate}
For every Brill--Noether general curve $C$, the SMRC locus $\mathcal{M}^r_d(C)$ has an excessively large component.
\end{prop}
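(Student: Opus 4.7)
The plan is to exhibit a subvariety $\mathcal{X}\subseteq \mathcal{M}^r_d(C)$ whose dimension strictly exceeds the expected value $D(g,r,d)$. The key observation is that if all sections of $V$ vanish at a common point $p$, then products of any two sections in $V$ vanish to order $\ge 2$ at $p$, so the image of $v_2$ is forced into $H^0(L^{\ot 2}(-2p))$, a proper subspace of $H^0(L^{\ot 2})$, which in the surjective range immediately prevents maximal rank.

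First, I would introduce the base-point addition morphism
\[
\alpha\colon G^r_{d-1}(C)\times C\to G^r_d(C),\qquad (L^{\pr},V^{\pr},p)\mapsto (L^{\pr}(p),V^{\pr}\cd s_p),
\]
where $s_p$ is a defining section of $p$. Condition (2) gives $\rho(g,r,d-1)\ge 0$, so on a Brill--Noether general curve $G^r_{d-1}(C)$ is non-empty of pure dimension $\rho(g,r,d-1)$. Condition (1) yields $d\ge g$ (from $r+g-d\le r$), whence condition (3) forces $r\ge 1$ (since $\binom{r+1}{2}\ge 2d-g+1\ge 1$). As $\rho(g,r,d-2)=\rho(g,r,d-1)-(r+1)$, the sublocus of $\g^r_{d-1}$'s carrying a base point has dimension at most $\rho(g,r,d-2)+1<\rho(g,r,d-1)$, so the generic $\g^r_{d-1}$ is base-point-free. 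Consequently $\alpha$ is birational onto its image $\mathcal{X}$, and $\dim\mathcal{X}=\rho(g,r,d-1)+1=\rho(g,r,d)-r$.

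Next, for $(L,V)=\alpha(L^{\pr},V^{\pr},p)$, every section in $V$ vanishes at $p$, so $v_2(\sy^2 V)\subseteq H^0(L^{\ot 2}(-2p))$. Since $d\ge g$ implies $L^{\ot 2}$ is non-special, the subspace $H^0(L^{\ot 2}(-2p))\subsetneq H^0(L^{\ot 2})$ has codimension $2$. Condition (3) gives $\binom{r+2}{2}=\binom{r+1}{2}+(r+1)>2d-g+1$, placing us strictly in the surjective range; thus maximal rank for $v_2$ would mean surjectivity, which fails. Hence $\mathcal{X}\subseteq \mathcal{M}^r_d(C)$.

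Finally, in the surjective range $D(g,r,d)=\rho(g,r,d)-1-\bigl(\binom{r+2}{2}-(2d-g+1)\bigr)$, and using $\binom{r+2}{2}-r=\binom{r+1}{2}+1$ one computes
\[
\dim\mathcal{X}-D(g,r,d)=\tbinom{r+1}{2}-(2d-g)+1\ge 2,
\]
where the final inequality is exactly condition (3). Any irreducible component of $\mathcal{M}^r_d(C)$ containing $\mathcal{X}$ therefore has dimension strictly greater than $D(g,r,d)$, proving the proposition. The main subtlety is verifying that the image of $\alpha$ really has the expected dimension $\rho(g,r,d-1)+1$, which reduces to the base-point-freeness of the generic $\g^r_{d-1}$; the role of the first half of condition (2) is to rule out pathologies from $\g^r_d$'s with $h^0(L)\ge r+2$, since $\rho(g,r+1,d)<0$ forces every $\g^r_d$ on $C$ to be complete.
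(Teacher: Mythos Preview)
Your argument is correct and follows essentially the same route as the paper: both produce linear series in $\mathcal{M}^r_d(C)$ by adding a base point to a $\g^r_{d-1}$, observe that the image of $v_2$ then lands in a proper subspace of $H^0(L^{\ot 2})$ (using that $|L^{\ot 2}|$ is base-point free since $\deg L^{\ot 2}\ge 2g$), and compare $\rho(g,r,d-1)$ with $D(g,r,d)$ via condition~(3). The paper is content to embed $G^r_{d-1}(C)$ for a fixed $P$ and obtain the bound $\rho(g,r,d-1)>D(g,r,d)$; you go a step further, vary $p$ over $C$, and argue that $\alpha$ is birational onto its image to get $\rho(g,r,d-1)+1$, which gives the slightly sharper excess $\ge 2$.

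One small inaccuracy: your assertion that $H^0(L^{\ot 2}(-2p))$ has codimension exactly $2$ can fail when $d=g$ and $L^{\ot 2}(-2p)\cong\omega_C$, where the codimension drops to $1$. This does not affect the argument, since all you need is that the inclusion is proper, and that follows from the base-point-freeness of $|L^{\ot 2}|$ (which is how the paper phrases it). Otherwise your proof is complete, and your explicit justification of the dimension of $\mathcal X$ via base-point-freeness of the generic $\g^r_{d-1}$ is a nice addition that the paper leaves implicit.
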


\begin{proof}
The second condition, coupled with the fact that $C$ is Brill--Noether general, implies that there exist $\g^r_{d-1}$'s (and hence $\g^r_d$'s) on $C$, and that every $\g^r_d$ (and every $\g^r_{d-1}$) is a complete linear series. Now let $|L|$ denote a $\g^r_{d-1}$ on $C$. Then for every point $P$,
$|L(P)|$ is a $\g^r_d$ on $C$ which is not base-point free, since the inclusion $H^0(L)\to H^0(L(P))$ is an isomorphism. 

\medskip
On the other hand, condition (1) implies that $\deg(L^2(2P))\ge 2g$, and hence $|L^2(2P)|$ must be base-point free. But the image of the multiplication map $\nu_2:H^0(L(P))^{\ot2}\to H^0(L^2(2P))$ is contained in 
$H^0(L^{\ot 2})$. The upshot is that $\nu_2$ is not surjective, and $|L(P)|$ belongs to $\mathcal{M}^r_d(C)$. 

\medskip
Finally, condition (3) implies that $(g,r,d)$ falls in the surjective range, and moreover, that
\[\rho(g,r,d-1)>\rho(g,r,d)-1-\bigg(\binom{r+2}{2}-(1+2d-g)\bigg).\]
But by construction, $\mathcal{M}^r_d(C)$ contains an isomorphic image of $G^r_{d-1}(C)$, of dimension $\rho(g,r,d-1)$. It follows, in particular, that $\mathcal{M}^r_d(C)$ contains an excessively large component.
\end{proof}

\begin{rem}
Whenever $(g,r,d)$ satisfies the condition in Proposition~\ref{prop:triv_case}, we say that $(g,r,d)$ is a \textit{trivial instance} within the surjective range. This naturally raises the question of whether non-empty non-trivial SMRC loci exist.
\end{rem}

\begin{ex}\label{ex:triv_ex}
It is easy to check that $(g,r,d)=(16,7,22)$ is a trivial instance in the surjective range. In this case $\rho(16,7,21)=0$, so there are finitely many $\g^7_{21}$'s on $C$. 
Each one of these generates a 1-dimensional family of linear series in $G^7_{22}(C)$ along which $\nu_2$ fails to be surjective. 
\end{ex}

More generally, whenever $G^r_d(C)$ contains a large component consisting of non-very ample linear series, $\mathcal{M}^r_d(C)$ may have excessively large components.    
\begin{lem}\label{lem:non_very_ample}
Suppose $|L|$ is a non-very ample $\g^r_d$ on a general genus $g$ curve such that $d\ge g+1$. Then $\nu_2$ is not surjective.
\end{lem}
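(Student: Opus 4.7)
The plan is to split into two cases based on whether $|L|$ has a base point. In both cases, the hypothesis $d \geq g+1$ forces $\deg(L^{\ot 2}(-Z)) \geq 2g-1$ for any effective divisor $Z$ of degree at most $2$, so $h^1(L^{\ot 2}(-Z)) = 0$ and $h^0(L^{\ot 2}(-Z)) = h^0(L^{\ot 2}) - \deg(Z)$ by Riemann--Roch. The goal is to exhibit a length-$2$ subscheme $Z$ whose image under projection collapses $\im(\nu_2)$ to a proper subspace of $H^0(L^{\ot 2})$.

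If $|L|$ has a base point $P$, the argument is immediate: every section of $L$ vanishes at $P$, so every product $s_1 s_2$ vanishes to order at least $2$ at $P$. Hence $\im(\nu_2) \subseteq H^0(L^{\ot 2}(-2P))$, a codimension-$2$ subspace of $H^0(L^{\ot 2})$, and $\nu_2$ cannot be surjective.

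If instead $|L|$ is base-point-free but not very ample, there is by definition a length-$2$ subscheme $Z \subset C$ (either $P+Q$ with $P \neq Q$, or $2P$) at which $|L|$ fails to separate, meaning the evaluation $\ev_Z \colon H^0(L) \to L|_Z$ has $1$-dimensional image, say $\langle v \rangle$. The restriction sequence of sheaves on $C$ yields, via $h^1(L^{\ot 2}(-Z)) = 0$, a short exact sequence of global sections
\[
0 \to H^0(L^{\ot 2}(-Z)) \to H^0(L^{\ot 2}) \to L^{\ot 2}|_Z \to 0,
\]
with $L^{\ot 2}|_Z$ of dimension $2$. In this quotient, the image of $\nu_2$ is spanned by the products $\ev_Z(s_1) \cdot \ev_Z(s_2)$ taken under the natural multiplication $(L|_Z)^{\ot 2} \to L^{\ot 2}|_Z$, and each such product is a scalar multiple of $v \cdot v$. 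Hence the image of $\nu_2$ in $L^{\ot 2}|_Z$ is at most $1$-dimensional, forcing $\dim \im(\nu_2) \leq h^0(L^{\ot 2}) - 1 < h^0(L^{\ot 2})$.

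The only step that requires a moment's care is the $Z = 2P$ case, where one must verify that the jet multiplication $(L|_{2P})^{\ot 2} \to L^{\ot 2}|_{2P}$ sends the line $\langle v \rangle$ to a nonzero line rather than to zero. In a local trivialization $L|_{2P} \cong K[\epsilon]/(\epsilon^2)$, writing $v = a + b\epsilon$ with $a \neq 0$ (automatic from base-point-freeness), one has $v \cdot v = a^2 + 2ab\epsilon \neq 0$. With this elementary check in hand the argument concludes; note that the hypothesis $d \geq g+1$ enters only to secure $h^1(L^{\ot 2}(-Z)) = 0$, and no genericity of $C$ beyond what the lemma already assumes is actually needed.
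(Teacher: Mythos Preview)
Your proof is correct and follows essentially the same approach as the paper's: both use the length-$2$ subscheme $Z$ witnessing the failure of very ampleness to show that $\im(\nu_2)$ lands in a proper subspace of $H^0(L^{\ot 2})$, invoking $d\ge g+1$ to ensure $H^0(L^{\ot 2})$ surjects onto $L^{\ot 2}|_Z$. Your treatment is somewhat more explicit---you work with the restriction exact sequence and carry out the jet computation at $2P$ directly, whereas the paper phrases the conclusion via very ampleness of $|L^{\ot 2}|$ and dispatches the base-point case by citing the preceding proposition---but the underlying argument is the same.
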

\begin{proof}
In light of Proposition~\ref{prop:triv_case}, it suffices to consider the case where $|L|$ is base-point free. As $|L|$ is not very ample, there is some pair of points $R_1,R_2\in C$ (not necessarily distinct) for which 
\[h^0(L(-R_1-R_2))=h^0(L(-R_1))=h^0(L(-R_2))=r.\]
Consequently, $\im(\nu_2)$ has no section which vanishes to order exactly 1 at $R_1$ and does not vanish at $R_2$. But $d\ge g+1$, so $|L^{\ot 2}|$ is very ample. It follows that $\nu_2$ is not surjective. 
\end{proof}

\begin{ex}\label{ex:non_very_ample}
Suppose $(g,r,d)=(13,6,18)$. Every $\g^6_{18}$ on a general genus 13 curve is a base-point free complete linear series $|L|$ with $h^1(L)=1$. It follows that every $\g^6_{18}$ is of the form $|\omega_C(-Z)|$, where $Z$ is an effective divisor of degree 6. Since a general curve of genus 13 has no $\g^1_6$, we have $\omega_C(-Z_1)\cong \omega_C(-Z_2)$ if and only if $Z_1=Z_2$.

\medskip
On the other hand, a $\g^6_{18}$ 
fails to be very ample if and only if it contains a $\g^5_{16}$. In other words, $|\omega_C(-Z)|$ fails to be very ample if and only there exist $R_1,R_2\in C$ such that $|\mO(Z+R_1+R_2)|$ is a $\g^1_8$. Now let $I\subset G^1_8(C)\times \sy^6C$ denote the incidence variety of degree 6 effective divisors contained in a $\g^1_8$. It is easy to see that the projection $I \rightarrow G^1_8(C)$ over the curve $G^1_8(C)$ has one-dimensional fibers, so $I$ is irreducible and 2-dimensional. Furthermore, a 6-tuple of points can be contained in at most one $g^1_8$, because there is no $\g^2_{10}$ on a general curve of genus $13$. It follows that $\mathcal{M}^r_d(C)$ has a component that is at least 2-dimensional. However, $\rho(13,6,18)-1-|\binom{8}{2}-(1+36-13)|=1$. So $\mathcal{M}^r_d(C)$ has an excessively large component in this case.
\end{ex}

In light of Proposition~\ref{prop:triv_case} and Lemma~\ref{lem:non_very_ample}, the following question is fundamental.

\begin{q}\label{basic_question}
When $D(g,r,d)\ge 0$, does $\mathcal{M}^r_d(C)$ contain a very ample $\g^r_d$?
\end{q}
It is worth mentioning that in the original SMRC proposed by Aprodu and Farkas, the condition $\rho<r-2$ implies that every $\g^r_d$ on a general curve is very ample. (Indeed, this follows from \cite[Thm 0.1]{FSec}.) So answers to question~\ref{basic_question} will naturally extend the work of Aprodu and Farkas. As we will see later, affirmative answers to question~\ref{basic_question} will also lead to solutions to existence problems in higher-rank Brill-Noether theory.

\section{Enumerative calculations for SMRC loci}\label{sec:computation}
\subsection{Chern classes of $\sy^2\mU$ and $\pi_{2*}(\mL^{\ot2})$, and the degeneracy class of $\ti{\phi}$}\label{subsec:chern_sq}
\begin{si}
In this section, we make the running assumption that $d\ge g$. 
\end{si}
Our first goal is to determine the Chern classes of $\pi_{2 \ast}(\mL^{\ot2}(2D^{\pr}))$. Recall that $D^{\pr}$ is the pull-back of an effective divisor $D$ on $C$ to $C\times\Pic^d(C)$ and $\mL(D^{\pr})$ is a Poincar\'e line bundle on $C\times\Pic^{d+\deg(D)}(C)$. 
So it suffices to compute the Chern classes of $\pi_{2 \ast}(\mL^{\ot2})$.  Notice also under our running assumption that $h^1(C,\mL^{\ot2}_s)=0$ for all $s\in\Pic^d(C)$, and hence $R^i\pi_{2 \ast}\mL^{\ot 2}=0$ for every $i>0$. Grothendieck--Riemann--Roch now yields 
\begin{equation}\label{GRR}
\mbox{ch}(\pi_{2*}\mL^{\ot2})\cd \mbox{td}(\Pic^d(C))=\pi_{2*}(\mbox{ch}(\mL^{\ot2})\cd \mbox{td}(C\times\Pic^d(C))).
\end{equation}
The Todd class of an abelian variety is trivial, so $\mbox{td}(C\times\Pic^d(C))$ is the pull-back of $\mbox{td}(C)$. Accordingly, \eqref{GRR} reduces to
\[\mbox{ch}(\pi_{2*}\mL^{\ot2}))=\pi_{2*}(\mbox{ch}(\mL^{\ot2})\cd \mbox{td}(C))=\pi_{2*}\bigg(\mbox{ch}(\mL^{\ot2})\cd\bigg(1+{1\over 2}c_1(\mathcal{T}_C)\bigg)\bigg).\] 

We still need to compute $\mbox{ch}(\mL^{\ot 2})$, or equivalently $c_1(\mL^{\ot2})=2c_1(\mL)$. The latter is, however, given explicitly in \cite[Ch. VIII]{ACGH}. The upshot is that up to numerical equivalence
\[ch(\pi_{2*}(\mL^{\ot2}))=\pi_{2*}\bigg(ch(\mL)^2\cd\bigg(1+{1\over 2}c_1(\mathcal{T}_C)\bigg)\bigg)=(1-g+2d)-4\theta.\]
Equivalently, we have $c(\pi_{2*}\mL^{\ot2})=e^{-4\theta}$.

\medskip
Applying the LLT formulae of Theorem~\ref{thm:LLT1} in our context, we obtain 
\[
\begin{split}
s_k(\sy^2\mU)&=(-1)^k\sum_I\psi_Is_I(\mU) \text{ and}\\
c_k(\sy^2\mU)&=(-1)^{\binom{r+1}{2}}2^{-r(r+1)}\sum_I(-2)^{|I|}d_Is_I(\mU)
\end{split}
\]
where the summations are over all degree-$k$ Schur functions in the Chern roots of $\mU$, and the $\psi_I$ and $d_I$ are particular combinatorial coefficients defined by determinantal formulae; see 
Theorem \ref{thm:LLT1} for their precise definition.

\medskip
We now return to the class calculation initiated in section~\ref{subsec:statement}. By Porteous' formula, the class $\ti{\DD}$ in $A(Gr(r+1, \pi_{2 \ast}\mL(D^{\pr})))$ of the 
locus over which the vector bundle map $\ti{\phi}:\sy^2\mU\to p^{\ast}\pi_{2 \ast}(\mL^{\ot 2}(2D^{\pr}))$ fails to be of maximal rank is given by  
\[\Delta^{(\binom{r+2}{2}-c)}_{1+2d-g-c}(\SS)\]
in which $c=\min\{\binom{r+2}{2},1+2d-g\}-1$, $\SS$ denotes the set of variables 
\[\SS=\{c_n(p^{\ast}\pi_{2\ast}(\mL^{\ot 2}(2D^{\pr}))-\sy^2\mU)\mid n\ge 0\}=\bigg\{(-1)^n\sum_{I:|I|\le\binom{r+1}{2}+n}\frac{(4\theta)^{n-\deg(s_I)}\psi_I}{(n-\deg(s_I))!}s_I(\mU)\mid n\ge 0 \bigg\}\]
and $\Delta^{(p)}_{q}(\cdot)=\Delta_{(\underbrace{q,\hdots,q}_{p\text{ times}})}(\cdot)$ (see Notation~\ref{nt:Porteous}).


\subsection{Classes of SMRC loci} Combining our formulae from sections~\ref{subsec:statement} and \ref{subsec:chern_sq}, we deduce that up to numerical equivalence the class of $\mathcal{M}^r_d(C)$ is given by
\begin{equation}\label{main_eq}
S:=[G^r_d(C)]\cdot \ti{\mathbb{D}}=(-1)^{(r+1)(2g-1-d)}s_{2g-1-d,\hdots,2g-1-d+r}(\mU)\cdot\Delta_{(1+2d-g-c)}^{\binom{r+2}{2}-c}(\SS) 
\end{equation} 
where $\SS=\{(-1)^n\sum_{I:|I|\le\binom{r+1}{2}+n}\frac{(4\theta)^{n-\deg(s_I)}\psi_I}{(n-\deg(s_I))!}s_I(\mU)\mid n\ge 0\}$ and $c=\min\{\binom{r+2}{2},1+2d-g\}-1$. 
As we mentioned in section~\ref{smrc}, there is now a basic dichotomy depending on the sign of $\binom{r+2}{2}-(1+2d-g)$. 

\subsubsection{The surjective range: $\binom{r+2}{2}\ge 1+2d-g$.} 
This is the case of primary interest to us. In this case, $1+2d-g-c=1$ and by applying \cite[Lemma 14.5.1]{Int} we may rewrite \eqref{main_eq} as
\[S:=(-1)^{(r+1)(2g-1-d)+N}s_{2g-1-d,\hdots,2g-1-d+r}(\mU)\cdot\Delta^{(1)}_{N}(\SS^{-1})
\]
where $N=\binom{r+2}{2}-c=\binom{r+2}{2}-2d+g$ and $\SS^{-1}$ denotes the set of variables
\[\{s_n(p^*\pi_{2*}(\mL^{\ot 2}(2D^{\pr}))-\sy^2\mU)\mid n\ge 0\}=\bigg\{\sum_{k=0}^nc_{n-k}(\sy^2\mU)\frac{(4\theta)^k}{k!}\bigg| n\ge 0\bigg\}.\]
Simplifying, we find that the class of the SMRC locus is given by
\begin{equation}\label{eqn:int_prod}
S=(-1)^{(r+1)(d+1)+N}s_{J}(\mU)\cdot\sum_{I:|I|\le\binom{r+1}{2}+N}\frac{(-1)^{\deg(s_I)}2^{2N-|I|}d_I}{(N-\deg(s_I))!}s_I(\mU)\cd\theta^{N-\deg(s_I)}
\end{equation}
where $J=(2g-1-d,2g-d,\hdots,2g-1-d+r)$. To go further, will explicitly rewrite the products $s_J(\mU) \cd s_I(\mU)$ using Lemma~\ref{lem:lr}.
Now assume $2g-1-d\ge 0$. Let $a=2g-1-d$ and $n=r+1$. Applying Lemma~\ref{lem:coeff1} and Corollary~\ref{cor:Schur2}  to the intersection product $S$ given by equation~(\ref{eqn:int_prod}) as well as one of the LLT formulae, we get\footnote{Recall (Remark \ref{rem:Schur_val}) that $\deg(s_I)=|I|-\binom{r+1}{2}$. In particular, $N-\deg(s_I)\ge 0$ for all $I:|I|\le \binom{r+1}{2}+N$.} 
\begin{equation}\label{eqn:surj_range}
q_*(S)=(-1)^N\Big[\sum_{I:|I|\le\binom{r+1}{2}+N}\frac{2^{2N-|I|}d_I}{(N-\deg(s_I))!}\cd\frac{\prod_{1\le j<\ell\le r+1}(i_{\ell}-i_j)}{\prod_{j=1}^{r+1}(i_j+r+g-d)!}\Big]\theta^{N+g-\rho}.
\end{equation}
To prove the corresponding special maximal-rank locus $\cM^r_d(C)$ is non-empty, for any smooth projective curve $C$ of genus $g \leq d$, it suffices to show that
\begin{equation}\label{eqn:surj_coeff}
S(g,r,d):=(-1)^N\sum_{I:|I|\le\binom{r+1}{2}+N}\frac{2^{2N-|I|}d_I}{(N-\deg(s_I))!}\cd\frac{\prod_{1\le j<\ell\le r+1}(i_{\ell}-i_j)}{\prod_{j=1}^{r+1}(i_j+r+g-d)!}\neq 0.
\end{equation}
\begin{ex}\label{ex:ex1}
An important case is that in which $g=13$, $d=18$, and $r=6$. Here
\[
S(13,6,18)=-\sum_{I:|I|\le26}\frac{2^{10-|I|}d_I}{(26-|I|)!}\cd\frac{\prod_{1\le j<\ell\le 7}(i_{\ell}-i_j)}{\prod_{j=1}^7(i_j+1)!}.
\]
\end{ex}
\subsubsection{The injective range: $\binom{r+1}{2}<1+2d-g$.}
In this case, $\binom{r+2}{2}-m=1$ and we get
\[
S=[G^r_d(C)]\cd \Delta^{(1)}_{N'}(\SS)=(-1)^{(r+1)(d+1)+N'}s_J(\mU)\Big( \sum_{I:|I|\le\binom{r+1}{2}+N'}\frac{(4\theta)^{N'-\deg(s_I)}\psi_I}{(N'-\deg(s_I))!}s_I(\mU)\Big)
\]
where $N^{\pr}=2+2d-g-\binom{r+2}{2}$ and $J=(2g-1-d,2g-d,\hdots,2g-1-d+r)$. Consequently,
\[
q_*(S)=\sum_{I:|I|\le\binom{r+1}{2}+N'}\frac{(-4)^{N'-\deg(s_I)}\psi_I}{(N'-\deg(s_I))!}\cd\frac{\prod_{1\le j<\ell\le r+1}(i_{\ell}-i_j)}{\prod_{j=1}^{r+1}(i_j+r+g-d)!}\theta^{N'+g-\rho}.
\]
To show the corresponding SMRC locus is non-empty, it suffices to show that
\[
S^{\pr}(g,r,d):=\sum_{I:|I|\le\binom{r+1}{2}+N'}\frac{(-4)^{N'-\deg(s_I)}\psi_I}{(N'-\deg(s_I))!}\cd\frac{\prod_{1\le j<\ell\le r+1}(i_{\ell}-i_j)}{\prod_{j=1}^{r+1}(i_j+r+g-d)!}
\]
is nonzero. Although the non-emptiness of the SMRC locus in the injective range is not the focus of the current paper, we conclude this subsection by giving a concrete numerical example in this range. 
\begin{ex}\label{ex:ex2}
Consider the case where $g=6$, $d=8$, and $r=3$. Here
\[
S^{\pr}(6,3,8)=\sum_{I:|I|\le8}\frac{(-4)^{2-\deg(s_I)}\psi_I}{(2-\deg(s_I))!}\cd\frac{\prod_{1\le j<\ell\le 4}(i_{\ell}-i_j)}{\prod_{j=1}^{4}(i_j+1)!}
\]
and $q_*(S)$ is numerically equivalent to a zero-cycle of degree 10.
\end{ex}

\subsection{Relating the SMRC degree to special values of shifted Schur functions}\label{computing_d_I}
In order to certify the nonemptiness of SMRC loci, we need to show that the degree $S(g,r,d)$ described by Equation~\ref{eqn:surj_coeff} is non-zero. Interesting combinatorial questions arise as $S(g,r,d)$ involves the combinatorial coefficients $d_I$ of Theorem~\ref{thm:LLT1}(1).

\medskip
Recall from Equation~\ref{eqn:key} that each $d_I$ is a special value of a shifted Schur function.
In particular, setting $n=r+1$, we may rewrite the formula for the SMRC degree $S(g,r,d)$ in Equation \ref{eqn:surj_coeff} as follows:
\begin{equation}
S(g,r,d)=(-1)^N\sum_{I:|I|\le N+\binom{n}{2}}\frac{(-1)^{|\la(I)|}\cd 2^{2N-|\la(I)|}\cd s^*_{\la(I)}(\ep(r))\cd f^{\la(I)}\cd\prod_{j=0}^{r}j!}{(N-|\la(I)|)!\cd |\la(I)|!\cd\prod_{j=1}^{r+1}(i_j+r+g-d)!}
\end{equation}
where $N=\binom{r+2}{2}-(2d-g)\ge 0$, $\ep(r)=(r+1\hh 1)$ is the staircase partition of Definition~\ref{def:staircase_partition}, and $f^{\la(I)}=\frac{|\la(I)|!\prod_{1\le j<\ell\le r+1}(i_{\ell}-i_j)}{\prod_{j=1}^{r+1}(i_j)!}$ is the dimension of the irreducible representation of $\SS_{r+1}$ indexed by $\la(I)$ (see \cite[4.11]{fulton2013representation}).

\medskip
Next, we rewrite $S(g,r,d)$ in terms of evaluations of shifted Schur functions along staircases.
Indeed, by \cite[Thm 3.1]{okounkov1997shifted}, we have $s^*_{\la}(\ep(r))=0$ unless $\la\subset\ep(r)$. It follows that 
\begin{equation}\label{eqn:surj_coeff2}
S(g,r,d)={\prod_{\aaa=0}^r\aaa!\over\prod_{\aaa=0}^r (\aaa+r+g-d)!}\sum_{m=0}^N{(-1)^{N-m}2^{2N-m}\over (N-m)!\cd m!}\sum_{\substack{\la\subset\ep\\\la\vdash m}}\frac{s^*_{\la}(\ep(r))\cd f^{\la}}{\prod_{j=1}^{r+1}(\la_j+r+1-j+(r+g-d))_{\la_j}}.    
\end{equation}
\begin{rem}
Notice that the coefficient $\prod_{\aaa=0}^r\aaa!\over\prod_{\aaa=0}^r (\aaa+r+g-d)!$ is the coefficient of the cohomology class of $W^r_d(C)$ (see Example~\ref{ex:Wrd}).
\end{rem}

Hereafter, let $\ov{S}(g,r,d)$ be the ``reduced" version of $S(g,r,d)$ defined by $S(g,r,d)=\frac{\prod_{\aaa=0}^r\aaa!}{\prod_{\aaa=0}^r(\aaa+r+g-d)!}\cd\ov{S}(g,r,d)$. We shall introduce one last auxiliary notion in order to simplify the computation of $\ov{S}(g,r,d)$ in the next subsection.
\begin{defn}
Given positive integers $g,r$ and $d$, let
\[F_{g,r,d}(m):={2^{2N-m}\over(N-m)!m!}\sum_{\la\vdash m}\frac{ F_{\la}(r)f^{\la}}{\prod_{j=1}^{r+1}(\la_j+(r+1-j)+(r+g-d))_{\la_j}}={2^{2N-m}\over(N-m)!m!}\sum_{\la\vdash m}\frac{ F_{\la}(r)f^{\la}}{(A\upharpoonright\la)}\]
where $A=2r+1+g-d$ and $(A\upharpoonright\la)$ is the generalized raising factorial (see Notation~\ref{nt:raising_fac}).
\end{defn}
Clearly $S(g,r,d)$ is nonzero if and only if $\ov{S}(g,r,d)=\sum_{m=0}^N(-1)^{N-m}F_{g,r,d}(m)$ is nonzero. Note that the class calculation carried out for $W^r_d(C)$ in Example~\ref{ex:Wrd} implies that the class $q_*(S)$ of equation~\eqref{eqn:surj_range} is precisely
\begin{equation}\label{eq:qS}
q_*(S)=[\sum_{m=0}^N(-1)^{N-m}F_{g,r,d}(m)]\cd[W^r_d(C)]\cd\theta^N.
\end{equation}

\begin{rem}
Using the standard (Hall) scalar product $\lag\cd,\cd\rag$ on the ring of symmetric functions $\Lambda$, we may rewrite $\ov{S}(g,r,d)$ in a more compact form. Indeed, the Schur functions determine an orthonormal basis for $\Lambda$ with respect to $\lag\cd,\cd\rag$, while the Murnaghan--Nakayama rule \cite[Thm 7.17.4]{stanley_fomin_1999} implies that $p_{1^m}= \sum_{\la \vdash m} f^{\la} s_{\la}$, where $p_{1^m}= \sum_j x_j^m$ denotes the $m$-th power sum symmetric function. It follows that
\begin{equation}
\ov{S}(g,r,d)=\bigg \lag\sum_{\la}\frac{ F_{\la}(r)s_{\la}}{(A\upharpoonright\la)} ,\sum_{m=0}^N{(-1)^{N-m}2^{2N-m}\over(N-m)!m!}p_{1^m}\bigg \rag={2^N\over N!}\bigg \lag\sum_{\la}\frac{ F_{\la}(r)s_{\la}}{(A\upharpoonright\la)},(p_{1}-2)^N \bigg \rag.
\end{equation}
\end{rem}

\subsection{$F_{g,r,d}(m)$ for small values of $m$}\label{subsec:F_g,r,d(m)_for_small_m}
We now apply the technical results of the final part of Subsection~\ref{computing_d_I} to compute $F_{g,r,d}(m)$ whenever $m \leq 7$.
\begin{lem}\label{lem:m_small_value} Set $A=2r+1+g-d$ and $B=\binom{r+2}{2}$. We have
\begin{enumerate}
\item $F_{g,r,d}(0)={2^{2N}\over N!}$;
\item $F_{g,r,d}(1)=\frac{2^{2N-1}B}{A(N-1)!1!}$;
\item $F_{g,r,d}(2)=\frac{2^{2N-2}B_2A}{(A+1)_3(N-2)!2!}$;
\item $F_{g,r,d}(3)=\frac{2^{2N-3}}{(A+2)_5(N-3)!3!}[B_3(A^2-2)-2B_2]$;
\item $F_{g,r,d}(4)=\frac{2^{2N-4}}{A(A+3)_7(N-4)!4!}[B_4(A^2-1)(A^2-9)+B_2(B-3)(B-6)(2A^2-3)]$;
 \item $F_{g,r,d}(5)=\frac{2^{2N-5}}{(A+4)_9(N-5)! 5!}\cdot  \big[B(B-1)(B-3)(72 + A^4 (B-4) (B-2) - 20 A^2 (B-4) (B-1) + 6 B (13B-48))\big]$;
    \item $F_{g,r,d}(6)= \frac{2^{2N-6}}{(A+5)_{11}(A+1)_3(N-6)! 6!}\cdot \big[B(B-1)(B-3)((B^3 - 11B^2 + 38B - 40)A^8 - (41B^3 - 411B^2 + 1198B - 840)A^6 + 2(229B^3 - 2009B^2 + 4636B - 1680)A^4 - 2(629B^3 - 4629B^2 + 8256B - 1280)A^2 + 240B^3 - 1440B^2 + 4800B)\big]$; and
    \item $F_{g,r,d}(7)= \frac{2^{2N-7}}{(A+6)_{13} (A^2-1)(N-7)! 7!}\cdot \big[B(B-1)(B-3)(B-6)((B^3 - 11B^2 + 38B - 40)A^8 - (71B^3 - 711B^2 + 2068B - 1440)A^6 + 14(112B^3 - 977B^2 + 2233B - 840)A^4 - 2(5699B^3 - 40149B^2 + 67266B - 9680)A^2 + 15780B^3 - 73200B^2 + 105300B - 9000)\big]$.
\end{enumerate}
\end{lem}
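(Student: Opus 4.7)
The plan is to reduce each $F_{g,r,d}(m)$, for $m\leq 7$, to an explicit rational function of $A$ and $B$ by evaluating the sum
\[
F_{g,r,d}(m)=\frac{2^{2N-m}}{(N-m)!\,m!}\sum_{\lambda\vdash m}\frac{F_\lambda(r)\,f^\lambda}{(A\upharpoonright\lambda)}
\]
partition by partition. The two key inputs are Proposition~\ref{prop:polynomiality}(1), which produces the one-row evaluations
\[
F_{(k)}(r)=\frac{1}{2^k k!}(r+k+1)(r+k)\cdots(r-k+2),
\]
and the shifted Jacobi--Trudi identity \eqref{eqn:JT_shift}, which expresses each $F_\mu(r)$ as a determinantal polynomial in the $F_{(k)}$. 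Items (1) and (2) then follow directly, using $F_{(1)}(r)=B$.

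For $2\le m\le 4$ there are at most five partitions of $m$ to treat, and the determinants in \eqref{eqn:JT_shift} are small enough to expand by hand. The computation is streamlined by exploiting the self-conjugacy of the staircase: since $\ep(r)'=\ep(r)$, Theorem~\ref{thm:ok1} yields $F_\lambda(r)=F_{\lambda'}(r)$ for every $\lambda$, so that conjugate partitions contribute equal numerators. Each summand $f^\lambda F_\lambda(r)/(A\upharpoonright\lambda)$ is then placed over the common denominator suggested by the statement, and the resulting numerator is rewritten as a polynomial in $A^2$ and $B$ using falling-factorial identities such as $F_{(2)}(r)=B_2/2$. Corollary~\ref{cor:prob}, $\sum_{\lambda\vdash m}F_\lambda(r)=B_m$, provides a convenient sanity check at each stage.

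For $m\in\{5,6,7\}$ the same recipe applies, but $p(5)=7$, $p(6)=11$ and $p(7)=15$ partitions must be handled, and the polynomials $F_\lambda(r)$ reach degree up to $2m$. The polynomial arithmetic is sufficiently heavy that it is best delegated to a computer algebra system, as anticipated in the introduction and the roadmap. The principal obstacle is not generating the individual evaluations $F_\lambda(r)$ but rather certifying the compact factored form of the final answer: after clearing denominators, the desired identity becomes an equality of bivariate polynomials in $A$ and $r$ of explicitly bounded degree, which can be verified either symbolically or by evaluating at enough integer sample points $(A,r)$ to exceed the degree bound. This last verification is routine but lengthy, and it constitutes the bulk of the work for the higher values of $m$.
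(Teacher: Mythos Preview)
Your approach is correct and closely parallel to the paper's, but the key technical input differs. You compute each $F_\lambda(r)$ directly via the shifted Jacobi--Trudi determinant \eqref{eqn:JT_shift}, expanding everything in terms of the explicitly known one-row evaluations $F_{(k)}(r)$ from Proposition~\ref{prop:polynomiality}(1). The paper instead uses the branching rules of Lemmas~\ref{OOthm9.1} and~\ref{OOthm11.1} (together with the conjugation symmetry from Theorem~\ref{thm:ok1} and Corollary~\ref{cor:prob}) to set up linear systems in the unknowns $F_\lambda(r)$, which it then solves in MATLAB. Your route is arguably more direct, since Jacobi--Trudi gives a closed determinantal formula for each $F_\lambda$ with no linear algebra to invert; the paper's route trades this for simpler recursions but at the cost of assembling and solving a system. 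Both methods require machine assistance for $m\in\{5,6,7\}$ and both produce the same polynomials in $A$ and $B$, so the difference is one of implementation rather than substance.
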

\begin{proof}
When $m\le 2$, the results follow directly from the definition of $F_{g,r,d}(m)$.
Now say $m=3$. Theorem~\ref{thm:ok1} and Corollary~\ref{cor:prob} imply that $2F_{(2,1)}(r)=\binom{r+2}{2}_3-2F_{(3)}(r)=B_3-2F_{(3)}(r)$, and it follows that 
\[
\begin{split}
    F_{g,r,d}(3)&=\frac{2^{2N-3}}{(N-3)!3!}\bigg(\frac{F_{(3)}(r)}{(A+2)_3}+\frac{B_3-2F_{(3)}(r)}{(A+1)_3}+\frac{F_{(1,1,1)}(r)}{(A)_3}\bigg)\\
    &=\frac{2^{2N-3}}{(A+2)_5(N-3)!3!}(B_3(A^2-2)-2B_2).
\end{split}
\]
The determination of $F_{g,r,d}(m)$ for $4 \leq m \leq 7$ is similar, but more involved. We apply Proposition~\ref{prop:polynomiality}, Lemma~\ref{OOthm9.1} and Lemma~\ref{OOthm11.1} to compute all of the functions $F_{\lambda}(r)$ with $|\lambda| \leq 7$. In doing so, we use MATLAB to explicitly solve a number of linear systems of equations.
\end{proof}
In Lemma \ref{lem:m_small_value} we computed the functions $F_{g,r,d}(m)$ explicitly for $m\le 7$. Since $\ov{S}(g,r,d)$ is an alternating sum of all $F_{g,r,d}(m)$ with $m\le N$, we thus obtain explicit formulas for $\ov{S}(g,r,d)$ whenever $N\leq 7$. When $N$ is small, we
can explicitly check the positivity of $\ov{S}(g,r,d)$ (and hence the positivity of $S(g,r,d)$). A key point is that whenever $m \le 7$, all $F_{g,r,d}(m)$ are expresed in terms of $A=2r+1+g-d$ and $B=\binom{r+2}{2}$; it turns out to be useful to estimate the ratio $\frac{B}{A}$.
\begin{lem}\label{lem:estimate}
Let $p=\frac{B}{A}\in\qq$. Under the numerical assumptions of Basic Inequality~\ref{lem:num_assumption}, we have 
\begin{enumerate}
    \item $p\ge 2$; and if $p=2$, then $(g,r,d) \in \{(1,2,3),(5,3,7)\}$. 
    \item $p\ge\frac{r+2}{3}$.
\end{enumerate}
\end{lem}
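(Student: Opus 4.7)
My plan is to introduce the auxiliary quantity $s := r+g-d$, which is non-negative by the Basic Inequality \ref{lem:num_assumption}. With this substitution, $A = r+1+s$ while $B = (r+1)(r+2)/2$ is independent of $d$, so both assertions reduce to bounds on $s$: a direct algebraic manipulation shows that $p \geq 2$ is equivalent to $s \leq (r+1)(r-2)/4$, while $p \geq (r+2)/3$ is equivalent to $s \leq (r+1)/2$.

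The key step is to extract a master bound on $s$ from the Basic Inequality. Its left end, $N + (r+1)s \leq g$ combined with $N \geq 1$, gives $g \geq 1 + (r+1)s$. Its right end, $2d - g \leq B - N$, rewritten using $d = r + g - s$ together with $N \geq 1$, gives $g \leq B - 1 - 2r + 2s$. Eliminating $g$ yields the master bound
\[(r-1)\, s \;\leq\; B - 2r - 2 \;=\; \frac{(r-2)(r+1)}{2}.\]

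Part (2) follows immediately: when $r \geq 2$, the master bound gives $s \leq (r-2)(r+1)/(2(r-1)) \leq (r+1)/2$, since $(r-2)/(r-1) \leq 1$. When $r = 1$, the master bound reads $0 \leq -1$, so the hypotheses cannot be satisfied and the claim holds vacuously.

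For part (1), I compare $(r-2)(r+1)/(2(r-1))$ with $(r-2)(r+1)/4$. When $r \geq 4$, we have $2(r-1) > 4$, so the master bound forces $s$ strictly below $(r-2)(r+1)/4$, and hence $p > 2$. When $r = 2$ or $r = 3$, the master bound coincides numerically with $(r-2)(r+1)/4$ (reading $s \leq 0$ and $s \leq 1$ respectively), so $p \geq 2$ with equality precisely when $s$ saturates the bound. In those equality cases, the two bounds $g \geq 1+(r+1)s$ and $g \leq B-1-2r+2s$ simultaneously collapse to equality, pinning down a unique triple: $(g,r,d) = (1,2,3)$ when $r = 2$, and $(g,r,d) = (5,3,7)$ when $r = 3$. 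The main subtlety is the equality bookkeeping in part (1); everything else is a direct consequence of the master bound, which compresses the constraints $N \geq 1$, $\rho \geq N$, $g \leq d$, and $2d - g \leq B - N$ into a single quadratic inequality in $s$.
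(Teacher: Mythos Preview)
Your proof is correct. The approach is close in spirit to the paper's but organized differently, and the difference is worth a sentence.

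The paper treats the two items separately: for (1) it computes $B-2A = g-4s-2+N$ directly and, for $r\ge 3$, observes that $g-4s-N \ge D(g,r,d)\ge 0$ (which is just $(r-3)s\ge 0$ in disguise), then does explicit casework on $r=2$ and on the equality cases $N=1$ with $r\in\{3\}$ or $r+g-d=0$; for (2) it uses $B-(r+1)s\ge 2N$ to deduce $2s\le r+1$ and hence $A\le \tfrac32(r+1)$. Your route instead eliminates $g$ once and for all to obtain the single master bound $(r-1)s\le (r-2)(r+1)/2$, and then reads off both items by comparing this with $s\le (r+1)(r-2)/4$ and $s\le (r+1)/2$. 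Algebraically the two arguments are equivalent (note $B-2A=(r+1)(r-2)/2-2s$), but your packaging is more uniform: the casework in (1) collapses to the single observation that $2(r-1)>4$ iff $r\ge 4$, and the equality classification falls out automatically from saturation of the master bound forcing $N=1$ and then pinning $g$. The paper's version, on the other hand, makes the role of the hypothesis $D(g,r,d)\ge 0$ more visible.
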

\begin{proof}
The fact that $r \geq 2$ for every $(g,r,d)$ in the surjective range is clear. Moreover, we have
\[B=\binom{r+2}{2}=2d-g+N=g-2(r+g-d)+2(r+1)-2+N=2A+g-4(r+g-d)-2+N.\]
Whenever $r\ge 3$, we have $g-4(r+g-d)-N\ge D(g,r,d)$, and therefore $B-2A\ge 2N-2\ge 0$, so that $p\ge 2$. On the other hand, when $r=2$, Basic Inequality~\ref{lem:num_assumption} implies that
\[
1+3(2+g-d)\le g\le \binom{r+2}{2}-N=2d-g= 6-N\le 5
\]
and hence that $2+g-d \in \{0,1\}$. If $2+g-d=0$, the only possibility is that $(g,r,d)=(1,2,3)$, in which case $B-2A=0$, i.e. $p=2$; while if $2+g-d=1$, no triple $(g,r,d)$ satisfies our numerical conditions.  

\medskip
The only other situation in which $p-2$ may not be strictly positive is when $r\ge 3$ and $N=1$. When this happens, $p-2$ will be strictly positive as soon as $r+g-d>0$ and $r>3$, as it then follows that $g-4(r+g-d)-N>D(g,r,d)$. It therefore remains to check those cases in which $r+g-d=0$ or $r=3$. 

\medskip
Accordingly, suppose that $r+g-d=0$ and $N=1$. If $B-2A=g-1$ vanishes, then necessarily $g=1$ and $r=d-1$. In that case, the fact that $N=\binom{d+1}{2}-2d+1=1$ implies that $d=3$ and $r=2$. On the other hand, if $r=3$, then $g-4(r+g-d)-1=0$ and $\binom{r+2}{2}=2d-g=9$ together force $g=5$ and $d=7$. So item (1) is proved.

\medskip
To verify item (2), note first that Basic Inequality~\ref{lem:num_assumption} also implies that 
\[
\binom{r+2}{2}-(r+1)(r+g-d)\ge 2N
\]
or equivalently, that
\[(r+2)-2(r+g-d)\ge\frac{4N}{r+1}>0.\]
It follows that $r+1\ge 2(r+g-d)$, i.e., that $A\le \frac{3}{2}(r+1)$. So we deduce that $p\ge\frac{\binom{r+2}{2}}{\frac{3}{2}(r+1)}=\frac{r+2}{3}$. 
\end{proof}
\medskip

\begin{ex}\label{ex:N_small}
Let $p=\frac{B}{A}$ as above. When $N=1$, we have
\begin{equation}
\ov{S}(g,r,d)=\frac{2B}{A}-4=2p-4.
\end{equation}
Applying Lemma \ref{lem:estimate}(1), we deduce that $\ov{S}(g,r,d)$ is non-negative, and strictly positive except when $(g,r,d) \in\{(1,2,3),(5,3,7)\}$. 

\medskip
Similarly, when $N=2$, Basic Inequality~\ref{lem:num_assumption} implies that $r\ge 3$, while
\[
\ov{S}(g,r,d)=\frac{2B_2A}{(A+1)_3}-\frac{8B}{A}+8>2 \bigg[\bigg(\frac{B-1}{B}\bigg)p^2-4p+4\bigg].
\]
Applying Lemma~\ref{lem:estimate}(2), we see that whenever $r\ge 6$, 
\[
p\ge\frac{8}{3}>\bigg(2+2\sqrt{\frac{1}{B}}\bigg)\frac{B}{B-1}
\]
in which the rightmost quantity is the larger root of the polynomial $f(x)=(\frac{B-1}{B})x^2-4x+4$. It follows that $\ov{S}(g,r,d)>0$ in this case. 

\medskip
We are left to deal with those cases in which $r \in \{3,4,5\}$. When $r=5$, from Basic Inequality~\ref{lem:num_assumption} we have $2+6(r+g-d)\le 19$ and thus $r+g-d\le 2$. Lemma~\ref{lem:estimate}(2) now yields $p\ge\frac{21}{6+2}>(2+2\sqrt{\frac{1}{B}})\frac{B}{B-1}$. Applying the argument of the preceding paragraph, we deduce that $\ov{S}(g,r,d)>0$. 

\medskip
When $r=4$, 
the unique triple $(g,r,d)$ that fulfills all of our numerical constraints is $(g,r,d)=(7,4,10)$, in which case $\ov{S}(7,4,10)=0$. Finally, $r=3$, there is a unique possibility, namely $(g,r,d)=(2,3,5)$; in this case, $\ov{S}(2,3,5)=0$ as well. 
\end{ex}

Calculating $F_{g,r,d}(m)$ explicitly becomes difficult as soon as $m\ge 8$. However, by applying a single branching rule (Lemma \ref{OOthm9.1}) for shifted Schur functions, we may conclude that $q_*(S)$ is a positive class when $r$ is relatively large with respect to $N$. To this end, we introduce the following rational function on partitions. 
\begin{nt}\label{nt:h_func}
Given any partition $\la$ of length at most $r+1$, let 
\[h_{\la}(g,r,d):=\prod_{j=1}^{r+1}((2r+1+g-d)+\la_j-j)^{-1}_{\la_j}.\] 
\end{nt}
Notice that the rational function $h_{\la}(g,r,d)$ is precisely the reciprocal of the product of all the \textit{shifted contents} $A+j-i$, for $(i,j)\in D(\la)$. The following result is straightforward to verify.

\begin{lem}
We have
\[h_{\la}(g,r,d)=[\la_j+1-j+(2r+1+g-d)]h_{\la^+}(g,r,d)\]
for all partitions $\la$ and $\la^+$ related by $\la^+-\la=(0\hh 0,\underbrace{1}_{j-\text{th}},0\hh 0)$. 
\end{lem}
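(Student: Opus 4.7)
The plan is to unpack the definition of $h_\lambda$ directly and recognize it as a product over the cells of the Young diagram of $\lambda$, after which the claim reduces to noting which single cell distinguishes $\lambda^+$ from $\lambda$.

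Concretely, set $A = 2r+1+g-d$. First I would expand the falling factorial in the definition of $h_{\lambda}$ as
\[
(A + \lambda_i - i)_{\lambda_i} \;=\; \prod_{k=0}^{\lambda_i - 1}(A + \lambda_i - i - k),
\]
and then make the substitution $\ell = \lambda_i - k$, which turns this into $\prod_{\ell=1}^{\lambda_i}(A + \ell - i)$. Consequently
\[
h_{\lambda}(g,r,d) \;=\; \prod_{i=1}^{r+1}\prod_{\ell=1}^{\lambda_i}\frac{1}{A + \ell - i} \;=\; \prod_{(i,\ell)\in D(\lambda)}\frac{1}{A + \ell - i},
\]
where $D(\lambda)$ denotes the Young diagram of $\lambda$. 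This is precisely the product of reciprocals of shifted contents flagged in the paragraph preceding the lemma.

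Next I would compare $D(\lambda)$ and $D(\lambda^+)$. Since $\lambda^+$ differs from $\lambda$ only by the addition of a single box in row $j$, we have $D(\lambda^+) = D(\lambda) \sqcup \{(j, \lambda_j + 1)\}$. It follows immediately that
\[
h_{\lambda^+}(g,r,d) \;=\; h_{\lambda}(g,r,d) \cdot \frac{1}{A + (\lambda_j + 1) - j},
\]
which rearranges to $h_{\lambda}(g,r,d) = \bigl[\lambda_j + 1 - j + (2r+1+g-d)\bigr]\, h_{\lambda^+}(g,r,d)$, as desired.

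There is no serious obstacle: the only subtlety is keeping the index conventions for the falling factorial straight when rewriting the product as a product over cells. Once $h_\lambda$ is expressed as a product over $D(\lambda)$ of reciprocals of shifted contents, the lemma is just the observation that passing from $\lambda$ to $\lambda^+$ inserts exactly one cell whose shifted content is $A + \lambda_j + 1 - j$.
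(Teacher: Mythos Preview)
Your proof is correct and matches the paper's approach exactly: the paper does not write out a proof, declaring the result ``straightforward to verify,'' but it explicitly records just before the lemma that $h_{\lambda}(g,r,d)$ is the reciprocal of the product of shifted contents $A+j-i$ over $(i,j)\in D(\lambda)$, which is precisely the rewriting you carry out. Once that identification is made, the lemma is immediate for the reason you give.
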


\begin{prop}\label{prop:asymp}
Fix a choice of $N=\binom{r+2}{2}-2d+g\ge 1$. The SMRC degree $S(g,r,d)$ is a positive rational number whenever $r\ge 12N-2$.
\end{prop}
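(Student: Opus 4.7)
The plan is to show that the sequence $\{F_{g,r,d}(m)\}_{m=0}^N$ is strictly increasing in $m$ whenever $r\ge 12N-2$. Granted this, the positivity of $\ov{S}(g,r,d)=\sum_{m=0}^N(-1)^{N-m}F_{g,r,d}(m)$---and hence of $S(g,r,d)$---follows by grouping consecutive terms of the alternating sum: each paired difference $F_{g,r,d}(2j)-F_{g,r,d}(2j-1)$ (or $F_{g,r,d}(2j+1)-F_{g,r,d}(2j)$, according to the parity of $N$) is strictly positive, and when $N$ is even the leftover term $F_{g,r,d}(0)=2^{2N}/N!$ is also positive.

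To establish the monotonicity I would set $M_m:=\sum_{\la\vdash m}F_\la(r)f^\la h_\la(g,r,d)$, so that $F_{g,r,d}(m)=\frac{2^{2N-m}}{(N-m)!\,m!}M_m$, and then apply the branching rule of Lemma~\ref{OOthm9.1}---in the form $(B-m)F_\la(r)=\sum_{\la^+\supset\la}F_{\la^+}(r)$---to compute $(B-m)M_m$. Reindexing the resulting double sum by $\nu=\la^+\vdash m+1$ and using both the identity $\sum_{\la\subset\nu,|\la|=m}f^\la=f^\nu$ and the relation $h_\la=(A+c(\nu/\la))h_\nu$ (with $A=2r+1+g-d$ and $c$ the content), one arrives at
\[
(B-m)M_m \;=\; A\,M_{m+1}\;+\;\sum_{\nu\vdash m+1}F_\nu(r)h_\nu\sum_{\la\subset\nu,|\la|=m}f^\la\,c(\nu/\la).
\]
The crucial input is the classical identity $\sum_{\la\subset\nu,|\la|=|\nu|-1}f^\la\,c(\nu/\la)=\frac{2f^\nu C(\nu)}{|\nu|}$, where $C(\nu):=\sum_{(i,j)\in\nu}c(i,j)$ is the content sum; it follows by computing the trace of the Jucys--Murphy element $X_{|\nu|}=\sum_{k<|\nu|}(k,|\nu|)$ on the irreducible $\SS_{|\nu|}$-representation indexed by $\nu$ in two ways: via the Young basis (grouping SYTs by the position of the maximum entry) and via the transposition character $\chi^\nu(\tau)=\frac{2f^\nu C(\nu)}{|\nu|(|\nu|-1)}$. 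Substituting yields the clean recursion
\[
(B-m)M_m \;=\; A\,M_{m+1}\;+\;\frac{2}{m+1}\sum_{\nu\vdash m+1}F_\nu(r)f^\nu h_\nu\,C(\nu).
\]

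Because $|C(\nu)|\le\binom{m+1}{2}$ for every $\nu\vdash m+1$ (extremal at $\nu=(m+1)$ and $\nu=(1^{m+1})$), the correction sum has absolute value at most $m\,M_{m+1}$, whence $M_{m+1}/M_m\ge(B-m)/(A+m)$ and
\[
\frac{F_{g,r,d}(m+1)}{F_{g,r,d}(m)}=\frac{N-m}{2(m+1)}\cdot\frac{M_{m+1}}{M_m}\;\ge\;\frac{(N-m)(B-m)}{2(m+1)(A+m)}.
\]
This lower bound is strictly decreasing in $m$ on $\{0,\ldots,N-1\}$, with minimum value $(B-N+1)/[2N(A+N-1)]$ at $m=N-1$. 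Under the hypothesis $r\ge 12N-2$, Lemma~\ref{lem:estimate}(2) delivers $B/A\ge(r+2)/3\ge 4N$, i.e., $B\ge 4NA$; combined with $A\ge r+1\ge 12N-1>N$, a routine rearrangement confirms $B-N+1>2N(A+N-1)$, so the minimum ratio exceeds $1$ and strict monotonicity follows. The main obstacle is the content identity itself and its marriage with the branching rule: cruder bounds on $h_\la$ that rely only on the pointwise estimate $|c|\le m-1$ introduce multiplicative distortions of order $\bigl(\tfrac{A+N-1}{A-N+1}\bigr)^N$, growing exponentially in $N$, and therefore fail to establish positivity in the merely linear regime $r\gtrsim N$ prescribed by the proposition.
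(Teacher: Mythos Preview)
Your argument is correct and follows the same overall strategy as the paper: pair consecutive terms of the alternating sum $\ov S(g,r,d)=\sum_{m=0}^N(-1)^{N-m}F_{g,r,d}(m)$, and use the branching rule of Lemma~\ref{OOthm9.1} to compare $F_{g,r,d}(m)$ with $F_{g,r,d}(m+1)$ by rewriting both over partitions of size $m+1$. The only substantive difference lies in how the ``shifted content'' weights are controlled. The paper bounds each individual term $\lambda_j-j+A\le 3r+1$ pointwise (using $\lambda\subset\ep(r)$ and $d\ge g$), which yields the sufficient condition $c_N(r)>3r+1$ and thence $r\ge 12N-2$. You instead evaluate the weighted average $\sum_{\lambda^-}(f^{\lambda^-}/f^{\lambda})\bigl(A+c(\lambda/\lambda^-)\bigr)$ exactly via the Jucys--Murphy trace identity $\sum_{\lambda^-}f^{\lambda^-}c(\lambda/\lambda^-)=2f^{\lambda}C(\lambda)/|\lambda|$, and then invoke the trivial bound $|C(\lambda)|\le\binom{|\lambda|}{2}$, arriving at the cleaner estimate $M_{m+1}/M_m\ge (B-m)/(A+m)$.

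Your bound is genuinely sharper: since $A\le\tfrac32(r+1)$ by the proof of Lemma~\ref{lem:estimate}(2), one has $A+N-1<3r+1$ in the relevant range, and in fact your inequality $B-N+1>2N(A+N-1)$ already holds for $r$ roughly of size $7N$ rather than $12N$. You do not exploit this, instead verifying the stated threshold $r\ge 12N-2$ via $B\ge 4NA$ and $A\ge r+1\ge 12N-1$, which is fine. The positivity of all summands $F_\nu(r)f^\nu h_\nu$ (needed to pass from a bound on $|C(\nu)|$ to a bound on the correction sum) is indeed guaranteed here: $F_\nu(r)\ge 0$ always, and $h_\nu>0$ because every content of a box in $\nu\subset\ep(r)$ lies in $[-r,r]$ while $A\ge r+1$.
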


\begin{proof}
Let $a_{g,r,d}(m):={2^{2N-m}\binom{\binom{r+2}{2}}{m}\over(N-m)!}$, where $N=\binom{r+2}{2}-2d+g\ge1$, whenever $m\le N$. Whenever $m<N$, we further set
\[
c_{m+1}(r):=\frac{a_{g,r,d}(m+1)}{a_{g,r,d}(m)}={N-m\over 2}\cd\frac{\binom{r+2}{2}-m}{m+1}.
\]
For any fixed value of $N$, $c_{m+1}(r)$ is a quadratic polynomial in $r$ with highest-degree coefficient equal to ${N-m\over 4(m+1)}>0$.

{\flushleft Given} a partition $\la$, let $r_{\la}:=\frac{f^{\la} s_{\la}^{\ast}(\ep(r))}{\binom{r+2}{2}_{|\la|}}$. According to Lemma~\ref{OOthm9.1}, we have  
\[
\sum_{\substack{\la\vdash m\\\la\subset\ep}}r_{\la}h_{\la}=\sum_{\la}\Big(\sum_{\la^+:\la^+\subset\ep}[\la_j+1-j+(2r+1+g-d)]\cd \bigg(\frac{f^{\la}}{f^{\la^+}} \bigg)\cd r_{\la^+}h_{\la^+}\Big)
\]
where $\la^+-\la=(0\hh 0,\underbrace{1}_{j-\text{th}},0\hh 0)$. Equivalently, we have  
\begin{equation}\label{eq:branch}
\sum_{\substack{\la\vdash m\\\la\subset\ep}}r_{\la}h_{\la}=\sum_{\substack{\mu\vdash m+1\\\mu\subset\ep}}\Big(\sum_{\mu^-}[\mu_j-j+(2r+1+g-d)]\cd \Big(\frac{f^{\mu^-}}{f^{\mu}}\Big)\cd r_{\mu}h_{\mu}\Big)
\end{equation}
where 
$\mu-\mu^-=(0\hh 0,\underbrace{1}_{j-\text{th}},0\hh 0)$.

\medskip
Now say $N=2p-1$ is odd. Define $A_i:=-F_{g,r,d}(2i-2)+F_{g,r,d}(2i-1)$, for $i=1\hh p$. $\ov{S}(g,r,d)$ then decomposes as $\ov{S}(g,r,d)=A_1+..+A_p$, and equation~\eqref{eq:branch} implies that
\[
A_i=a_{g,r,d}(2i-2)\cd\sum_{\substack{\la\vdash 2i-1\\\la\subset\ep}}r_{\la}\Big(c_{2i-1}(r)-\sum_{\la^-}[\la_j-j+(2r+1+g-d)]\cd \Big(\frac{f^{\la^-}}{f^{\la}}\Big)\Big)h_{\la}(g,r,d).
\]
Here $r_{\la}\in\QQ\cap(0,1]$, while the quotients $\frac{f^{\la^-}}{f^{\la}}$ are positive rational numbers such that $\sum_{\la^-}\frac{f^{\la^-}}{f^{\la}}=1$, by the usual branching rule. More importantly, we have
\[\la_j-j+(2r+1+g-d)\le 3r+1\] 
since $\la\subset \ep$ and $d\ge g$. Therefore, provided $c_{2i-1}(r)> 3r+1$ holds for all $i$, we get $S(g,r,d)>0$. Since $c_{2i-1}(r)\ge c_{N}(r)$, it further suffices to establish that $c_N(r)> 3r+1$. Now $c_N(r)$ is a quadratic polynomial in $r$, with positive highest-degree coefficient, it is easy to see that $c_N(r)> 3r+1$ when $r$ is sufficiently large with respect to $N$. 

\medskip
Similarly, when $N=2p$ is even, we decompose $\ov{S}(g,r,d)$ as $B_0+B_1+...+B_p$, where $B_0=F_{g,r,d}(0)$ and $B_i=-F_{g,r,d}(2i-1)+F_{g,r,d}(2i)$ for $i\ge 1$. An argument analogous to that used in the case of odd $N$ allows us to conclude that $c_N(r)> 3r+1$ when $r$ is sufficiently large with respect to $N$. 

\medskip
On the other hand, $c_N(r)>3r+1$ is equivalent to the statement that the quadratic polynomial $Q_N(r):=r^2 + r(3-12N)+4-6N$ is positive. It is not hard to see that the largest root of $Q_N(r)$ in $r$ is less than $12N-2$.
\end{proof}

Finally, we classify all admissible triples $(g,r,d)$ according to the positivity of their SMRC loci whenever $N \leq 7$.

\begin{prop}\label{prop:positivity_for_small_N}
The class $q_*(S)$ of equation~\eqref{eqn:surj_range} is strictly positive when $N \leq 2$
except when either
\begin{itemize}
    \item[(i)] $N=1$, and $(g,r,d) \in \{(1,2,3), (5,3,7)\}$; or
\item[(ii)] $N=2$, and $(g,r,d) \in \{(2,3,5), (7,4,10)\}$
\end{itemize}
and $q_*(S)$ is unconditionally strictly positive whenever $3\leq N \leq 7$.
\end{prop}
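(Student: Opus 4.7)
The cases $N \leq 2$ have already been settled in Example~\ref{ex:N_small}, so only the range $3 \leq N \leq 7$ requires attention. The plan is to combine three ingredients already assembled in the paper: the explicit formulas for $F_{g,r,d}(m)$ with $m \leq 7$ from Lemma~\ref{lem:m_small_value}, the bounds on $p = B/A$ from Lemma~\ref{lem:estimate}, and the asymptotic positivity statement of Proposition~\ref{prop:asymp}.

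First, I would substitute the formulas of Lemma~\ref{lem:m_small_value} into
\[
\bar{S}(g,r,d) \;=\; \sum_{m=0}^{N}(-1)^{N-m} F_{g,r,d}(m),
\]
clearing a common positive denominator (a product of shifted factorials in $A$) to obtain, for each $N \in \{3,4,5,6,7\}$, an explicit polynomial $P_N(A,B)$ whose sign coincides with that of $\bar{S}(g,r,d)$. Proposition~\ref{prop:asymp} already yields positivity as soon as $r \geq 12N-2$, so it suffices to verify positivity of $P_N(A,B)$ in the residual regime $r \leq 12N-3$.

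For each such $r$, the equality $2d-g=B-N$ together with Basic Inequality~\ref{lem:num_assumption} (which enforces $d \leq \binom{r+2}{2}-N$, $d \geq g$, and $r+g-d \geq 0$) pins $(g,d)$ down to a finite, explicit list. I would then evaluate $\bar{S}(g,r,d)$ on each admissible triple with computer assistance and confirm $\bar{S}(g,r,d) > 0$ in every case. The bounds $p \geq 2$ and $p \geq (r+2)/3$ from Lemma~\ref{lem:estimate} are useful as a sanity check and allow one to discard many candidate triples by hand before invoking the machine.

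The main obstacle I expect is organizational rather than conceptual: the polynomial $P_7(A,B)$ coming from $F_{g,r,d}(7)$ is already quite intricate, and the number of residual triples grows roughly as $N \cdot (12N-2)^2$, so a uniform closed-form inequality is unlikely to materialize. A cleaner argument would isolate the dominant contribution of $\bar{S}(g,r,d)$ as a polynomial in $p$ whose leading coefficient is manifestly positive, and then control the lower-order corrections by the lower bounds on $p$; this would replace the case check by an algebraic majorization, but implementing it requires careful bookkeeping in the determinantal expansions that enter the Jacobi--Trudi formula \eqref{eqn:JT_shift} for shifted Schur functions.
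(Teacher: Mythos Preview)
Your proposal is correct and follows essentially the same route as the paper: invoke Example~\ref{ex:N_small} for $N\le 2$, apply Proposition~\ref{prop:asymp} to reduce $3\le N\le 7$ to the finite range $r\le 12N-3$, extract lower bounds on $r$ from Basic Inequality~\ref{lem:num_assumption}, and then verify $\bar{S}(g,r,d)>0$ on the remaining admissible triples by computer. The paper carries this out via MATLAB and records the values in an ancillary file; your additional remarks about clearing denominators to form $P_N(A,B)$ and using Lemma~\ref{lem:estimate} as a filter are reasonable embellishments but not needed for the argument.
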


\begin{proof}
Given Example \ref{ex:N_small}, it only remains to show that $\ov{S}(g,r,d)>0$ for $N$ between 3 and 7. 

\medskip
Applying Proposition~\ref{prop:asymp}, we conclude that $\ov{S}(g,r,d)>0$ whenever $r \geq 12N-2$. Thus we are left over with finitely many cases of feasible triples $(g,r,d)$. We then determine a lower bound for $r$ for each value of $N$ by applying Basic Inequality~\ref{lem:num_assumption}: 
\begin{enumerate}
    \item When $N=3$, $r\ge 4$;
    \item when $N=4,5$, $r\ge 5$;
    \item when $N=6,7$, $r\ge 6$.
\end{enumerate}

Finally, we check the positivity for the finitely many remaining possible triples $(g,r,d)$ using MATLAB; see the ancillary file:

\medskip
\url{https://drive.google.com/file/d/1Az5WOZyoa_UzQvktT7KuilTpbddL4ANn/view}. 

\medskip
In this file, we list those values of $\ov{S}(g,r,d)$ all $(g,r,d)$ for which $3\le N\le 7$ and $r$ lies between the given lower bound and $12N-3$.  
\end{proof}
\begin{rem}
The fact that $S(1,2,3)=S(2,3,5)=0$ is consistent with Mumford's theorem \cite[Thm 6]{mumford2010varieties}, which establishes that the quadratic multiplication map $v_2$ associated with (the complete series determined by) a line bundle $L$ with $\deg(L)\ge 2g+1$ is surjective.
\end{rem}



\section{The Bertram-Feinberg-Mukai conjecture and its connection with the SMRC}\label{sec:bfm}
In this section, we explain the connection between the BFM conjecture (Conjecture \ref{BFM}) and the Strong Maximal Rank Conjecture for quadrics. 

\medskip
Our point of departure is the fact that every stable rank-two vector bundle $E$ with canonical determinant fits into a short exact sequence of the form
\begin{equation}\label{eqn:ext}
    0\to\omega\ot L^{-1}\to E\to L\to 0.
\end{equation}
Every extension \eqref{eqn:ext} naturally determines an element $e \in  \ext^1(L,\omega\ot L^{-1})\cong H^0(L^{\ot 2})^{\vee}$. Given any such extension $e$, we have $h^0(E)=h^1(L)+\dim \ker(u_e)$, where $u_e$ is the linear map $H^0(L)\to H^1(\omega\ot L^{-1})\cong H^0(L)^{\vee}$ in the cohomological long exact sequence induced by \eqref{eqn:ext}.

\medskip
In order to verify (the existence portion of) Conjecture~\ref{BFM}, our aim is to produce extensions \eqref{eqn:ext} of line bundles $[L]\in\Pic^d(C)$ whose associated rank-two vector bundles $E$ are stable and satisfy $h^0(E)\ge k$, whenever $k(k+1)\le 6g-6$.
In doing so, we try to simultaneously ensure that $d$ is as small as possible relative to $k$ and that $u_e$ has small rank. Heuristically speaking, we specify a stable vector bundle by identifying one of its \textit{minimal} quotient line bundles, to the extent that this is possible given the cohomological condition $h^0(E)\ge k$. 

\medskip 
The impulse to consider extensions of (relatively) small degree line bundles comes from two sources. First of all, maximal sub-line bundles (and hence minimal quotient bundles) of a vector bundle are in some sense canonical. For example, it is known that the space of maximal subbundles of any rank-two vector bundle is at most one-dimensional \cite[Cor. 4.6]{msub}, and is further known to be finite or even a singleton \cite[Cor. 3.2 and Prop. 3.3]{msub} for general rank-two vector bundles with certain prescribed degrees and Segre invariants. Second, it is easier to certify that rank-two bundles determined by extensions of line bundles of relatively small degrees are stable. In fact, we shall see that in many such cases stability is automatic.

\medskip
On the other hand, since we focus on extensions $e$ of line bundles in which $u_e$ has relatively large {\it cokernel} dimension, quite often the loci of the line bundles being extended are related to the SMRC loci for quadrics we defined in Section~\ref{smrc}. To see this, we analyze the two respective conditions:
\begin{enumerate}
\item $h^1(L)\ge p$; and
\item $\dim\ker(u_e)\ge k-p$.
\end{enumerate}

The locus of line bundles $L$ in $\Pic^d(C)$ with $h^1(L)\ge p$ is easy to describe: it is precisely the Brill-Noether locus $W^{p+d-g}_d(C)$.

\medskip
We now turn to the dimension of $\ker(u_e)$. The assignment that takes an extension $e$ to the corresponding coboundary map $u_e$ describes a linear map from $\ext^1(L,\omega_C\ot L^{-1})\cong H^0(L^{\ot 2})^{\vee}$ to $\h(H^0(L),H^0(L)^{\vee})\cong H^0(L)^{\vee}\ot H^0(L)^{\vee}$, which is dual to the multiplication map $\mu_L:H^0(L)^{\ot 2}\to H^0(L^{\ot 2})$. It is easy to see that the condition $\dim\ker(u_e)\ge k-p$ may be reformulated as the statement that some $(k-p)$-dimensional subspace $V\sbs H^0(L)$ is such that $\im(\mu_{L,V})\sbs\ker(e)$, where $\mu_{L,V}$ is the restriction of $\mu_L$ to $H^0(L)\ot V$, and $e$ is viewed as a linear function on $H^0(L^2)$. (See \cite[Remark 5.7]{ext2} for a more general statement.)

\medskip
The upshot is that in order to show existence of rank two linear series with canonical determinant and many sections, it is useful to study loci in $\ext^1(L,\omega\ot L^{-1})$ of specified rank.

\begin{defn}
Let $\cW_t$ denote the locus of $e$ in $\ext^1(L,\omega\ot L^{-1})$ where $\dim\rk(u_e)\le h^0(L)-t$.
\end{defn}
It is not hard to see that $\cW_t$ is a determinantal scheme of expected codimension $t^2$. Indeed, it is precisely the $((r+1)-t)$-th degeneracy locus of the pull-back of the universal linear map on $\h(H^0(L),H^0(L)^{\vee})$. Accordingly, we have a natural filtration
\[\ext^1(L,\omega\ot L^{-1})\supset\cW_1\supset\cW_2\supset\hdots\supset\cW_{r+1}.\]
Note that $\cW_{r+1}=0$ if and only if the multiplication map $\mu_L$ is surjective.

For our application to higher-rank Brill--Noether theory, we will mainly consider \textit{non-general} extensions of (possibly) non-general linear series by their Serre duals. Whether or not invertible subsheaves of $L$ lift to subsheaves of (stable) rank two bundles $E$ is related to the existence of $D$ on $C$ that fail to impose independent conditions on global sections of $L$. 
\begin{prop}\label{prop:sec}
Suppose $e\in \ext^1(L,\omega\ot L^{-1})$ is nonzero, where $\deg(L)=g+a$ with $a>0$. Let $E$ denote the rank-two vector bundle obtained from $e$, and suppose further that $h^0(E)=h^0(L)+h^1(L)$. Suppose $Z$ is an effective divisor on $C$.
\begin{enumerate}
    \item Whenever there exists a morphism $i:L(-Z)\to E$ for which the composition $L(-Z)\stackrel{i}{\to} E\to L$ is the identity and realizes $L(-Z)$ as a sub-bundle of $E$, we have 
    \[\deg(Z)\ge 2\dim(H^0(L)/H^0(L(-Z))).\]
    \item If, moreover, $\deg(Z) \le a/2$ and $\mu_{L(-Z)}$ is surjective, then $E$ is not stable.
\end{enumerate}

\end{prop}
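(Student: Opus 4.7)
For (1), the plan is to exploit the two rank-one sub-sheaves $\omega \ot L^{-1} \hookrightarrow E$ (from the defining extension) and $i : L(-Z) \hookrightarrow E$ inside $E$. Since the composition $L(-Z) \to E \to L$ is the standard inclusion and hence injective, these two sub-sheaves intersect trivially in $E$, so their sum gives an injection $L(-Z) \op (\omega \ot L^{-1}) \hookrightarrow E$. The resulting quotient is supported on $Z$ with length $\deg E - \deg L(-Z) - \deg(\omega \ot L^{-1}) = \deg Z$; quotienting the injection further by $\omega \ot L^{-1}$ recovers the standard inclusion $L(-Z) \hookrightarrow L$, which identifies the quotient with $\mathcal{O}_Z$.

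Set $M := E/L(-Z)$, a line bundle by the sub-bundle hypothesis. Quotienting the preceding sequence by $L(-Z)$ yields $0 \to \omega \ot L^{-1} \to M \to \mathcal{O}_Z \to 0$; since $M$ is locally free, the inclusion $\omega \ot L^{-1} \hookrightarrow M$ realizes $\omega \ot L^{-1} = M(-D)$ for an effective divisor $D$ with $\mathcal{O}_D \cong \mathcal{O}_Z$, forcing $D = Z$ and $M \cong (\omega \ot L^{-1})(Z)$. In particular $\omega \ot M^{-1} \cong L(-Z)$, so by Serre duality $h^1(M) = h^0(L(-Z))$. Combining the bound $h^0(E) \le h^0(L(-Z)) + h^0(M)$ coming from $0 \to L(-Z) \to E \to M \to 0$ with Riemann--Roch applied to both $L$ and $M$ and with the hypothesis $h^0(E) = h^0(L) + h^1(L)$, one obtains $\deg Z \ge 2\dim(H^0(L)/H^0(L(-Z)))$ after a short algebraic manipulation.

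For (2), the plan is to exhibit a destabilizing sub-bundle of $E$ by lifting $L(-2Z)$ rather than $L(-Z)$ itself. The hypothesis $h^0(E) = h^0(L) + h^1(L)$ is equivalent to $u_e \equiv 0$, which via the Serre-duality identification $\ext^1(L, \omega \ot L^{-1}) \cong H^0(L^{\ot 2})^{\vee}$ means $e$ annihilates $\im(\mu_L) \sbs H^0(L^{\ot 2})$, where $\mu_L \colon \sy^2 H^0(L) \to H^0(L^{\ot 2})$. Surjectivity of $\mu_{L(-Z)}$ together with the inclusion $H^0(L(-Z)) \sbs H^0(L)$ yields $H^0(L^{\ot 2}(-2Z)) = \mu_L(\sy^2 H^0(L(-Z))) \sbs \im(\mu_L)$, so $e$ also annihilates $H^0(L^{\ot 2}(-2Z))$. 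Dually, this says the pullback of $e$ to $\ext^1(L(-2Z), \omega \ot L^{-1})$ vanishes, so $L(-2Z)$ admits a lift to a sub-sheaf of $E$. Its saturation $N$ is a sub-line-bundle with $\deg N \ge \deg L(-2Z) = g + a - 2\deg Z \ge g$ under the hypothesis $\deg Z \le a/2$, hence $\mu(N) \ge g > g-1 = \mu(E)$, contradicting semistability (and a fortiori stability) of $E$. The main technical subtlety lies in pinning down $M$ in (1); once that identification is in hand, the rest of both parts reduces to Riemann--Roch and straightforward bookkeeping.
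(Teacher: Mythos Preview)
Your proposal is correct and follows essentially the same approach as the paper. One minor remark: in part (1) you work somewhat hard to identify $M = E/L(-Z)$, but since $L(-Z)$ is a sub-line-bundle of the rank-two bundle $E$ with $\det E = \omega_C$, the quotient is immediately $M \cong \det E \ot L(-Z)^{-1} = \omega_C \ot L^{-1}(Z)$; the paper simply writes down this short exact sequence and proceeds directly to the Riemann--Roch computation, which is the same as yours.
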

\begin{proof}
The assumption in item one implies there is a short exact sequence 
\[0\to L(-Z)\to E\to \omega_C\ot L^{-1}(Z)\to 0\]
and hence $h^0(L(-Z))+h^1(L(-Z)) \geq h^0(L)+h^1(L)=h^0(E)$. On the other hand, Riemann--Roch together with the long exact sequence in cohomology induced by the inclusion of $L(-Z)$ in $L$ imply that
\[
h^0(L(-Z))+h^1(L(-Z))=h^0(L)+h^1(L)+\deg(Z)-2\dim(H^0(L)/H^0(L(-Z)))
\]
and the claim in item one follows immediately.

Now suppose $\deg(Z)\le a/2$ and $\mu_{L(-Z)}$ is surjective. The multiplication map $\mu_L$ factors as 
\[H^0(L)^{\ot 2}\twoheadrightarrow \im(\mu_L)\hookrightarrow H^0(L^{\ot 2}).\]
The fact that $h^0(E)=h^0(L)+h^1(L)$ implies that $e$ is in the kernel of $H^0(L^2)^{\vee}\to \im(\mu_L)^{\vee}$; since $\im(\mu_L)$ contains $\im(\mu_{L(-Z)})=H^0(L^2(-2Z))$, $e$ vanishes on $H^0(L^2(-2Z))$. It follows that $e$ is in the kernel of the dual of the inclusion map $H^0(L^2(-2Z))\to H^0(L^2)$, namely
\[
\ext^1(L,\omega\ot L^{-1}) \to \ext^1(L(-2Z),\omega\ot L^{-1}).
\]
This means, in turn, that the top row of the following diagram splits:
\[\begin{tikzcd}
0\ar[r]& \omega_C\ot L^{-1}\ar[r]\ar[d]&E^{\pr}\ar[r]\ar[d]& L(-2Z)\ar[r]\ar[d]&0\\
0\ar[r]& \omega_C\ot L^{-1}\ar[r]&E\ar[r]& L\ar[r]&0
\end{tikzcd}\]
where $E^{\pr}= E \times_L L(-Z)$. Thus $L(-2Z)$ is an invertible sub-sheaf of $E^{\pr}$ (and hence of $E$). Since $\deg(Z)\le a/2$, we have $\deg(L(-2Z))\ge g$, and consequently $E$ is not stable.
\end{proof}

\begin{rem}
Geometrically, Proposition~\ref{prop:sec} relates the liftability of invertible subsheaves of $L$ to the existence of divisors $Z$ whose images under $|L|$ span {\it secant spaces} of dimensions prescribed by the rank of the evaluation map $H^0(L) \to H^0(L|_Z)$. This, in turn, explains our terminological use of ``secant divisors".
\end{rem}

Finally, a classical result of Nagata gives a lower bound on the degree of a maximal subbundle of a rank-two vector bundle:  
\begin{thm}[\cite{Nag}]\label{thm:nagata}
Any maximal-degree line subbundle $F$ of a rank-two vector bundle $E$ over a non-singular projective curve of genus $g$ over an algebraically closed field satisfies
\[
\deg(E)- 2\deg(F) \leq g.
\]
\end{thm}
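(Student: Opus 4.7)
The approach proceeds by contradiction. Set $n:=\deg(E)$ and let $F\sbs E$ be a maximal-degree line subbundle, writing $L:=E/F$; the bundle $E$ is then recovered from the extension $0\to F\to E\to L\to 0$ whose class $e$ lies in $\ext^1(L,F)\cong H^1(F\ot L^{-1})$. Set $s:=\deg(L)-\deg(F)=n-2\deg(F)$ (the Segre invariant), and assume for contradiction that $s\geq g+1$. The goal is to produce a line subbundle $M\sbs E$ of degree $\deg(F)+1$, which contradicts the maximality of $F$.

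A candidate $M$ of degree $\deg(F)+1$ cannot be contained in $F$ and must project nontrivially onto $L$; writing this projection as $M=L(-D)\hookrightarrow L$ for some effective divisor $D$ of degree $s-1$, the lift to $M\hookrightarrow E$ exists iff the pullback of $e$ vanishes in $H^1(F\ot L^{-1}(D))$. From the short exact sequence $0\to F\ot L^{-1}\to F\ot L^{-1}(D)\to (F\ot L^{-1}(D))|_D\to 0$, the vanishing $H^0(F\ot L^{-1}(D))=0$ (since that sheaf has degree $-1$) yields an injective connecting map $\delta_D\colon H^0((F\ot L^{-1}(D))|_D)\hookrightarrow H^1(F\ot L^{-1})$ whose $(s-1)$-dimensional image I denote $V_D$; the lift exists precisely when $e\in V_D$.

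The existence question thus becomes whether some $D\in C^{(s-1)}$ satisfies $e\in V_D$. For a single point $p\in C$, $V_p$ is a line in $H^1(F\ot L^{-1})$, and Serre duality identifies the resulting morphism $\phi\colon C\to\PP(H^1(F\ot L^{-1}))$ with the one associated to the complete linear series $|F^{-1}\ot L\ot\omega|$ (since $H^0(F^{-1}\ot L\ot\omega)$ is dual to $H^1(F\ot L^{-1})$ and $V_p$ corresponds to evaluation at $p$). Since $\deg(F^{-1}\ot L\ot\omega)=s+2g-2\geq 3g-1$, this line bundle is very ample for $g\geq 2$, so $\phi$ is an embedding into $\PP^{s+g-2}$. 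For $D=p_1+\cdots+p_{s-1}$, the subspace $V_D$ is the linear span of $\phi(p_1),\ldots,\phi(p_{s-1})$, so the existence of $D$ with $e\in V_D$ amounts to the statement that $[e]\in\PP^{s+g-2}$ lies on the $(s-1)$-secant variety of $\phi(C)$. The expected dimension $\min(2(s-1)-1,\,s+g-2)=s+g-2$ (valid for $s\geq g+1$) predicts that this secant variety fills the entire ambient projective space, thereby forcing $[e]$ to lie on it.

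The main obstacle is promoting this dimensional expectation to a rigorous filling statement, i.e.\ verifying that the $(s-1)$-secant variety of $\phi(C)$ is not dimensionally deficient in $\PP^{s+g-2}$. Via Terracini's lemma, this reduces to checking that the embedded tangent lines to $\phi(C)$ at $s-1$ general points span the expected codimension, a condition for which the very ampleness of $F^{-1}\ot L\ot\omega$ provides enough room. The small-genus boundary cases $g\in\{0,1\}$ are handled directly (for $g=0$, every rank-two bundle on $\PP^1$ splits and the inequality is immediate). This outline essentially recovers Nagata's original argument from \cite{Nag}.
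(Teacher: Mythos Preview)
The paper does not give its own proof of this theorem; it is stated with a citation to \cite{Nag} and used as a black box. So there is nothing in the paper to compare against directly.

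Your outline is essentially the extension-theoretic proof due to Lange and Narasimhan (and, independently, Maruyama), not Nagata's original argument, which proceeds on the ruled surface $\PP(E)$ by analyzing self-intersections of sections. Your last sentence is therefore inaccurate.

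The argument itself is correct up to the point you flag as the main obstacle, and that obstacle is real: knowing that $2(s-1)-1\ge s+g-2$ is only a dimension count, and curves in projective space can in principle have defective secant varieties, so an appeal to ``enough room'' is not a proof. The clean way to close this gap in your setting is to use the incidence correspondence $I=\{(D,[v]):v\in V_D\}\subset C^{(s-1)}\times\PP^{s+g-2}$, which is a $\PP^{s-2}$-bundle over $C^{(s-1)}$ and hence proper and irreducible; it then suffices to show that the projection to $\PP^{s+g-2}$ is dominant. By Terracini, the image of the differential at a general reduced $D=p_1+\cdots+p_{s-1}$ is $\PP(V_{2D})$, and $V_{2D}$ is all of $H^1(F\ot L^{-1})$ precisely when $h^0(M(-2D))=0$ where $M=F^{-1}\ot L\ot\omega$. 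Since $s\ge g+1$, the map $C^{(s-1)}\to\Pic^{2g-s}(C)$, $D\mapsto M(-2D)$, is surjective (it factors through a surjection $C^{(s-1)}\to\Pic^{s-1}(C)$ followed by a translate of multiplication by $-2$), so $M(-2D)$ is a general line bundle of degree $2g-s\le g-1$ and hence has no sections. This is the missing step; with it, properness of $I$ gives surjectivity of the projection, so every $[e]$ lies in some $\PP(V_D)$, completing your argument. The $g=1$ case is also not quite as immediate as you suggest; it requires Atiyah's classification or a separate short computation.
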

Applying this theorem to the case where $E$ is a rank-two vector bundle with canonical determinant yields the following useful statement.
\begin{cor}\label{cor:min_quot}
The degree of the minimal quotient line bundle of a rank-two vector bundle with canonical determinant is at most $\lfloor{3g\over 2} \rfloor-1$.
\end{cor}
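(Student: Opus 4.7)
The plan is to combine Nagata's bound with the degree constraint coming from the canonical determinant. Let $E$ be a rank-two vector bundle on $C$ with $\det(E) \cong \omega_C$, so that $\deg(E) = 2g-2$. Quotient line bundles of $E$ correspond bijectively to line subbundles via the assignment $L \mapsto F := \ker(E \twoheadrightarrow L)$, and under this correspondence a \emph{minimal-degree} quotient $L$ corresponds to a \emph{maximal-degree} subbundle $F$, with $\deg(L) = \deg(E) - \deg(F) = 2g-2-\deg(F)$.

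Next I would apply Theorem~\ref{thm:nagata} to the maximal subbundle $F$: it yields
\[
\deg(E) - 2\deg(F) \leq g, \qquad \text{i.e.,} \qquad \deg(F) \geq \frac{\deg(E)-g}{2} = \frac{g-2}{2}.
\]
Substituting into $\deg(L) = 2g-2-\deg(F)$, this gives
\[
\deg(L) \leq 2g-2 - \frac{g-2}{2} = \frac{3g-2}{2} = \frac{3g}{2}-1.
\]

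Finally, since $\deg(L)$ is an integer and $-1$ is an integer, taking the floor yields
\[
\deg(L) \leq \left\lfloor \frac{3g}{2} - 1 \right\rfloor = \left\lfloor \frac{3g}{2} \right\rfloor - 1,
\]
which is the desired bound. There is no real obstacle here: the corollary is a purely numerical consequence of Nagata's inequality once one translates between maximal subbundles and minimal quotients and uses $\deg(E) = 2g-2$. The only mild subtlety is handling the parity of $g$ when taking the floor, but this is automatic from the identity $\lfloor x - 1 \rfloor = \lfloor x \rfloor - 1$ for any real $x$.
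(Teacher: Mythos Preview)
Your proof is correct and follows exactly the approach the paper intends: the corollary is stated immediately after Nagata's theorem with the remark that it is obtained by ``applying this theorem to the case where $E$ is a rank-two vector bundle with canonical determinant,'' and you have simply written out that numerical deduction in full.
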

So by replacing a line bundle $L$ with its Serre dual if necessary, it suffices to consider $L$ for which $g-1\le \deg(L)\le \lfloor{3g\over 2} \rfloor-1$ when constructing stable rank-two vector bundles with canonical determinant via extensions as in \eqref{eqn:ext}.

\subsection{The search for minimal quotient line bundles}
In order to prove the existence portion of the BFM conjecture it suffices to show that for every $g\ge 2$ and for the \textit{maximal} integer $k=k(g)$ for which $\rho_{g,k}\ge 0$, there exists a stable rank-two vector bundle $E$ over a general genus $g$ curve for which $\det(E)=\omega$ and $h^0(E)\ge k$. 

\medskip
Recall that we are interested in extensions of the form \eqref{eqn:ext} over a curve $C$ that is (Brill-Noether and Petri-) general. 
We would also like to minimize the degree of $L$ in such extensions. In light of this, Corollary~\ref{cor:min_quot} motivates the following definition.
\begin{defn}\label{defn:min_deg}
The {\it minimal BN-compatible degree} with respect to $k$ is
\[
d^*_k:=\min\{d:g-1\le d\le \lfloor{3g\over 2}\rfloor-1\mid \exists L \in \Pic^d(C) \text{ such that } h^0(L)+h^1(L)\ge k\}.
\]
\end{defn}
Now suppose $\deg(L)=d^*_k$. The Brill-Noether theorem and Serre duality 
together imply that
\begin{equation}\label{eq:BN}
(r+1)(r+g-d)\le g \text{ and } (r+1)+(r+g-d)\ge k
\end{equation}  
where $h^0(L)=r+1$, $\deg(L)=d$ and $h^1(L)=r+g-d$. 
\begin{defn}
Fix positive integers $(g,k)$ such that $\rho_{g,k}\ge 0$. Suppose $g-1\le d\le \lfloor{3g\over 2}\rfloor-1$ and that $(r,d)$ satisfies the constraints in \eqref{eq:BN}; we then say that $(r,d)$ is \textit{BN-compatible} with respect to $(g,k)$.
\end{defn}

\medskip
It is not always the case that a stable rank-two vector bundle $E$ for which $\det(E)\cong \omega_C$ and $h^0(E)=k$ has a quotient line bundle of degree $d^*_k$. For example, if for all line bundles $L$ of degree $d^*_k$, the multiplication map $\mu_L$ is surjective and $h^0(L)+h^1(L)=k$, then every extension of the form \[0\to \omega\ot L^{-1}\to E\to L\to 0\] 
splits (and hence $E$ is not stable). However, on the positive side we have the following result.
\begin{prop}\label{prop:basic_case}
Suppose $e:0\to \omega_C\ot L^{-1}\to E\to L\to 0$ is a non-trivial extension of line bundles such that $h^0(E)\ge k$ and $\deg(L)=d^*_k \geq g$, then $E$ is stable and $L$ is a minimal quotient line bundle of $E$.
\end{prop}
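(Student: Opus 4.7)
The plan is to establish the stability of $E$ and the minimality of $L$ as a quotient line bundle separately, in both cases leveraging the extremal property of $d^*_k$ in Definition~\ref{defn:min_deg} together with the duality between subs and quotients induced by $\det E \cong \omega_C$.

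For stability, I would proceed by contradiction and suppose that $F \subset E$ is a saturated sub-line bundle with $\deg F \geq g-1$ (so $E$ is not stable, since $\mu(E)=g-1$). Consider the composition $F \hookrightarrow E \twoheadrightarrow L$. If this map were zero, then $F$ would embed in $\omega_C \otimes L^{-1}$, forcing
\[
g-1 \le \deg F \le \deg(\omega_C \otimes L^{-1}) = 2g-2-d^*_k \le g-2,
\]
using $d^*_k \ge g$, which is absurd. Hence $F \hookrightarrow L$, and in particular $\deg F \le d^*_k$. Set $L' := E/F$; since $\det E = F \otimes L' = \omega_C$, we have $L' \cong \omega_C \otimes F^{-1}$. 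The long exact sequence associated with $0 \to F \to E \to L' \to 0$ then gives $h^0(L') \ge h^0(E)-h^0(F)$, and Serre duality yields $h^1(L') = h^0(F)$. Adding, $h^0(L')+h^1(L') \ge h^0(E) \ge k$, and by the same duality $h^0(F)+h^1(F) = h^0(L')+h^1(L') \ge k$.

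I would then split into two subcases. If $\deg F < d^*_k$, then $F$ is a line bundle of degree in $[g-1,\lfloor 3g/2\rfloor -1]$ with $h^0+h^1 \ge k$, directly contradicting the minimality in Definition~\ref{defn:min_deg}. If $\deg F = d^*_k$, the nonzero map $F \to L$ between line bundles of equal degree is an isomorphism; composing with $F \hookrightarrow E$ yields a section of $E \twoheadrightarrow L$, splitting $e$ and contradicting its non-triviality. Once stability is in hand, the minimality of $L$ is immediate: let $L_{\min}$ be any line-bundle quotient of $E$ of least degree; the same Serre-duality argument gives $h^0(L_{\min})+h^1(L_{\min}) \ge k$, while stability forces $\deg L_{\min} \ge g$ and Corollary~\ref{cor:min_quot} forces $\deg L_{\min} \le \lfloor 3g/2 \rfloor -1$. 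By Definition~\ref{defn:min_deg}, $\deg L_{\min} \ge d^*_k$, and since $L$ itself is a quotient of degree $d^*_k$, it realizes the minimum.

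The main delicate point is the transfer of the cohomological hypothesis $h^0(E) \ge k$ onto the destabilizing sub $F$: the identification $L' \cong \omega_C \otimes F^{-1}$ coming from $\det E \cong \omega_C$, coupled with Serre duality, is precisely what makes $F$ a candidate line bundle of degree in the range $[g-1,\lfloor 3g/2\rfloor -1]$ with $h^0+h^1 \ge k$, so that the minimality of $d^*_k$ can be invoked. The secondary subtlety is the boundary case $\deg F = d^*_k$, which is not ruled out by Definition~\ref{defn:min_deg} alone and instead requires the non-triviality of the extension class $e$.
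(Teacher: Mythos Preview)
Your proof is correct and follows essentially the same route as the paper's: consider a destabilizing sub-line bundle $F$, split on whether the composition $F \to L$ vanishes, and in the nonzero case use the canonical determinant and Serre duality to transfer $h^0(E)\ge k$ to $h^0(F)+h^1(F)\ge k$, then invoke the minimality of $d^*_k$ and the non-triviality of $e$. If anything, you are more explicit than the paper, which compresses the Serre-duality step into the phrase ``by construction''.
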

\begin{proof}
Let $0\to F\to E$ be a sub-line bundle of $E$. The composition of morphisms of line bundles $F\to E\to L$ is either zero or injective. If it is zero, the morphism $F\to E$ must factor through the kernel of $E\to L$, which is $\omega_C\ot L^{-1}$. Then $\deg(F)\le 2g-2-\deg(L)\le g-2$. 

\medskip
If it is injective, let $d^{\pr}=\deg(F)\le\deg(L)=d^*_k$. Since $h^0(F)+h^1(F)\ge k$ by construction, we must have either $\deg(F)<g-1$ or $\deg(F)=\deg(L)$. However, the second case is impossible as this would force $F\cong L$ and thereby violate our assumption that $e$ is a non-trivial extension. 

\medskip
It follows that $E$ admits no sub-line bundle of degree $g-1$ or greater. Hence $E$ is stable. The fact that $L$ is minimal follows from the definition of $d^*_k$ and the stability of $E$.
\end{proof}
\begin{cor}\label{cor:tri}
The existence portion of the BFM conjecture holds under either of the following two circumstances.
\begin{enumerate}
    \item There exists a BN-compatible pair $(r,d^*_k)$ such that $2r+1+g-d^*_k=k$, $d^*_k\ge g$ and $(r,d^*_k)$ falls within the injective range. 
    \item There exists a BN-compatible pair $(r,d^*_k)$ such that $2r+1+g-d^*_k=k$, $d^*_k\ge g$, $(r,d^*_k)$ falls within the surjective range, and $\mathcal{M}^r_{d^*_k}(C)\neq\emptyset$. 
\end{enumerate}
\end{cor}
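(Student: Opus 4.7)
The plan is to construct, under either hypothesis, a stable rank-two vector bundle $E$ on a general curve $C$ with $\det(E)\cong\omega_C$ and $h^0(E)\ge k$, realized as a nontrivial extension
\[
0\to \omega_C\otimes L^{-1}\to E\to L\to 0
\]
of a line bundle $L$ of degree $d^*_k$. Since $d^*_k\ge g$ in both cases, Proposition~\ref{prop:basic_case} automatically delivers stability of $E$ (and the minimality of $L$ as a quotient) from any such nontrivial extension provided $h^0(E)\ge k$. The task therefore reduces to producing a single extension class $e$ meeting this cohomological bound.

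The long exact sequence in cohomology gives $h^0(E)=h^1(L)+\dim\ker(u_e)$, where $u_e\colon H^0(L)\to H^1(\omega_C\otimes L^{-1})\cong H^0(L)^\vee$ is cup-product with $e$. Because $(r,d^*_k)$ is BN-compatible with $2r+1+g-d^*_k=k$, any $L\in W^r_{d^*_k}(C)$ of generic type satisfies $h^0(L)=r+1$ and $h^1(L)=r+g-d^*_k$, so $h^0(L)+h^1(L)=k$. Thus the bound $h^0(E)\ge k$ forces $u_e=0$; equivalently, $e$ must lie in the kernel of the dual $\mu_L^\vee\colon H^0(L^{\otimes 2})^\vee\to \text{Sym}^2 H^0(L)^\vee$ of the full quadratic multiplication map $\mu_L$. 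Producing a valid $e$ therefore amounts to certifying that $\mu_L$ fails to be surjective for some complete linear series on a line bundle $L$ of degree $d^*_k$, in which case any nonzero vector in $\ker(\mu_L^\vee)$ serves our purpose.

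For case (1), we are in the injective range, which by definition means $\binom{r+2}{2}<2d^*_k-g+1=\dim H^0(L^{\otimes 2})$. For purely dimensional reasons $\mu_L$ cannot then be surjective, so $\ker(\mu_L^\vee)\ne 0$ and the desired $e$ is immediately available. The existence of $L\in\Pic^{d^*_k}(C)$ with $h^0(L)=r+1$ on the general $C$ is guaranteed by the Brill--Noether theorem because $(r,d^*_k)$ is BN-compatible.

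For case (2), the nonemptiness of $\mathcal{M}^r_{d^*_k}(C)$ supplies a pair $(L,V)$ at which $v_2\colon\text{Sym}^2 V\to H^0(L^{\otimes 2})$ fails to have maximal rank; since we are now in the surjective range, this means $v_2$ is not surjective. The anticipated main obstacle is that a priori $V$ might be a proper subspace of $H^0(L)$, whereas our argument needs $\mu_L$ itself to be non-surjective. This is resolved by observing that on a BN-general curve, and for the minimal $d^*_k\ge g$ under consideration, the Brill--Noether theorem forces $h^0(L)=r+1$ for a generic $L\in W^r_{d^*_k}(C)$, so every $g^r_{d^*_k}$ supported on such an $L$ is automatically complete and $v_2=\mu_L$. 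Choosing a nonzero $e\in\ker(\mu_L^\vee)$ and invoking Proposition~\ref{prop:basic_case} then yields the desired stable bundle $E$, establishing the existence portion of the BFM conjecture.
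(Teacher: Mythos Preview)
Your approach coincides with the paper's: construct a nontrivial extension $e$ with $u_e=0$ and invoke Proposition~\ref{prop:basic_case} for stability. Two points deserve comment.

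In case~(1), your dimension count is more direct than the paper's argument. The paper appeals to Larson's Maximal Rank theorem, but as you observe, in the injective range $\dim\mathrm{Sym}^2 H^0(L)<\dim H^0(L^{\otimes 2})$ forces non-surjectivity of $\mu_L$ without any further input.

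In case~(2), your justification that the series is complete is not quite right. You assert that $h^0(L)=r+1$ for a \emph{generic} $L\in W^r_{d^*_k}(C)$, but the pair $(L,V)\in\mathcal{M}^r_{d^*_k}(C)$ supplied by the hypothesis need not have $L$ generic; and if $V\subsetneq H^0(L)$, non-surjectivity of $v_2$ on $\mathrm{Sym}^2 V$ says nothing about $\mu_L$. What you actually need is $W^{r+1}_{d^*_k}(C)=\emptyset$, i.e.\ $\rho(g,r+1,d^*_k)<0$, and this follows from the minimality of $d^*_k$ (which you mention but do not exploit): if $\rho(g,r+1,d^*_k)\ge 0$ then, since BN-compatibility gives $r+g-d^*_k\ge 0$, one computes
\[
\rho(g,r,d^*_k-1)=\rho(g,r+1,d^*_k)+(r+1+g-d^*_k)\ge 1,
\]
so line bundles of degree $d^*_k-1\ge g-1$ with $h^0+h^1=k+1\ge k$ would exist on $C$, contradicting the definition of $d^*_k$. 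With this in hand every $g^r_{d^*_k}$ is complete, $v_2=\mu_L$, and your argument goes through. (The paper's own proof leaves this completeness issue implicit.)
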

\begin{proof}

In both cases, the multiplication map $\mu_L$ fails to be surjective and hence its dual $\mu^{\vee}_L$ fails to be injective. Case (1) follows from the classical Maximal Rank Conjecture, which is now a theorem of Eric Larson; see \cite{larson2017maximal}. Larson's theorem implies that there exists a nonzero extension $e$ in $\ext^1(L,\omega_C\ot L^{-1})$ with $u_e=0$, in which case $\dim\ker(u_e)=h^0(L)$ and hence $h^0(E)=k$.
In both cases, the stability of $E$ follows from Proposition~\ref{prop:basic_case}.
\end{proof}
\begin{rem}
Note that we ignore the possibility that $d^*_k=g-1$ for a practical reason. Namely, when $g\ge 13$ and $k$ is relatively large with respect to $g$, the condition $d^*_k\ge g$ will always hold as there is no strictly semi-stable rank-two vector bundle on a Brill-Noether general curve with $k$ independent sections. See \cite[Lemma 3.12]{Zhang1} for a precise statement. 
\end{rem}
\begin{ex}
The following are two examples of those cases covered by Corollary~\ref{cor:tri}.

\begin{enumerate}
    \item Set $(g,k)=(14,8)$. In this case, $d^*_8=17$, $(5,17)$ is a BN-compatible pair that falls within the surjective range, and $N(g,r,d)=1$. It follows from our calculation of the SMRC class in Proposition \ref{prop:positivity_for_small_N} that $\mathcal{M}^5_{17}(C)$ is nonempty. Consequently, the existence portion of the BFM conjecture holds for $(g,k)=(14,8)$.  
    \item Set $(g,k)=(18,9)$. In this case, $d^*_9=20$, and $(5,20)$ is a BN-compatible pair that falls within the {\it injective} range. Thus, the existence portion of the BFM conjecture also holds for $(g,k)=(18,9)$.  
\end{enumerate}
\end{ex}

\medskip
 


\subsection{Existence in small genera}\label{subsubsec: small_genera} The existence portion of the BFM conjecture for small genera ($g\le 12$) was first established by Bertram and Feinberg in \cite{Ber}. Here, we show how these cases of the conjecture can be easily recovered from our MRC-based viewpoint.


\medskip
The $k$-values listed here are maximal with respect to the given $g$-values such that $\rho_{g,k}=3g-3-\binom{k+1}{2}\ge0$. As we shall see, the case $g=2$ is exotic in the sense that $\rho_{2,2}=0$, but there is no stable bundle of rank two with two sections on a genus 2 curve. We nevertheless describe the situation in this case, since the approach is the same as for other low genera cases. 

\medskip
\noindent $\mathbf{g=2, k=2.}$ We will show that every semi-stable rank-two vector bundle with canonical determinant and two sections is strictly semi-stable. The minimal quotient line bundle of a rank two vector bundle $E$ with canonical determinant on a genus 2 curve $C$ has degree at most $\lfloor{3g\over 2}\rfloor-1=2$. For $h^0(E)\ge2$ to hold, $E$ must fit into an extension $e$ of the form 
\[0\to \mO_C\to E\to\omega_C\to 0\]
since $h^0(L)+h^1(L)\ge 2$ and $\deg(L)=2$ together imply that $L=\omega_C$. 

\medskip
Now consider $e$ as a point in $\PP(H^0(\omega_C^{\ot 2})^{\vee})$. By \cite[Prop. 1.1]{msub}, $e$ lies in $\Sec_1(X)=X$ whenever $E$ is {\it not} stable, where $X$ is the image of the morphism $C\to\PP(H^0(\omega_C^{\ot 2})^{\vee})$. In particular, a general extension $e$ will contribute a stable bundle $E$. On the other hand, it is easy to check directly that $\mu_{\omega_C}:\sy^2 H^0(\omega_C)\to H^0(\omega_C^{\ot 2})$ is an isomorphism, since $h^0(\omega_C)=2$. It follows that the dual $\mu_{\omega_C}^{\vee}$ is also an isomorphism. Consequently, every point in $\PP(H^0(\omega_C^{\ot 2})^{\vee})$ corresponds to a symmetric bilinear map on $H^0(\omega_C)$. We claim that the locus where the bilinear map has rank one is precisely $X$. Indeed, we have $\rk(\mu_{\omega_C}^{\vee}(e))=1$ if and only if $\ker(e)=\im(\mu_{\omega_C,V})$, where $V$ is some 1-dimensional subspace of $H^0(\omega_C)$, in which case we have $V=H^0(\omega_C(-P))$ for some $P$. But then $\ker(e)=H^0(\omega_C^{\ot 2}(-P))$; that is, $e$  is the image of $P$ on $X$. It follows that $\mO_C(P)$ is a sub-bundle of $E$, and that $E$ is semi-stable.

\medskip
The same argument also yields the existence of stable rank two bundles with canonical determinant and one global section. 

\medskip
\noindent $\mathbf{g=3,k=3}.$ In this case, $d^*_3=3$. We conclude by applying Corollary~\ref{cor:tri}(1) to a general $\g^1_3$ on a general curve of genus 3. 

\medskip
\noindent $\mathbf{g=4,k=3}.$ In this case, $d^*_3=3$. A general extension of a general $\g^1_4$ on a genus 4 curve by its Serre dual yields a vector bundle with three sections; and since $d^*_3=3$, the bundle is at least semi-stable. If we further assume that the $\g^1_4$, say $(L,H^0(L))$, is base-point free, the bundle is in fact stable. Indeed, if $(r,3)$ is BN-compatible then necessarily $r=1$. On the other hand, if $L$ is base-point free, then its associated complete series has no sub-$\g^1_3$, and hence $E$ has no destabilizing subbundle of degree 3, which means that $E$ is stable.

It is easy to see that a base-point free $\g^1_4$ exists on a general genus 4 curve. Namely, consider the natural (addition) map $W^1_3(C)\times C\to W^1_4(C)\subset \Pic^4(C)$, where $W^1_3(C)$ is the Brill-Noether locus of $\g^1_3$'s inside $\Pic^3(C)$. By the Brill-Noether theorem, the domain is 1-dimensional, while the target $W^1_4(C)$ is 2-dimensional.  

\medskip
\noindent $\mathbf{g=5,k=4}.$ In this case, $d^*_4=4$, and the smallest $d\ge 5$ for which some pair $(r,d)$ is BN-compatible is $d=6$ with $r=2$. Now let $(L,H^0(L))$ be a general $\g^2_6$, and let $e$ be a general extension of $L$ by its Serre dual. By arguing much as in the $g=4$ case, we conclude there is some non-trivial extension $e$ that produces a semi-stable vector bundle with 4 sections. In fact, since $\ker(\mu_L^{\vee})$ is 2-dimensional, the locus $S$ in $\PP(H^0(L^{\ot 2})^{\vee})$ of such extensions $e$ is 1-dimensional. By \cite[Prop. 1.1]{msub}, to conclude that the resulting vector bundle is stable, we need to certify that $e\notin \Sec_2(X)$, where $X$ is the image of $C$ under the morphism $C\to\PP(H^0(L^{\ot 2})^{\vee})$. 

We now claim that $S$ is not contained in $\Sec_2(X)$. To see this, note first that for $e$ to lie on a secant line of $X$ spanned by $x_1$ and $x_2$ means that $\ker(e)\supset H^0(L^{\otimes 2}(-x_1-x_2))$. (In this case $L^{\otimes 2}$ is very ample, so we may identify points on $X$ with points on $C$.) Moreover, $h^0(L^{\otimes 2}(-x_1-x_2))=\dim(\im(\mu_L))=6$. By the Brill-Noether theorem, a $\g^2_6$ on a general genus 5 curve is base-point free, so $\im(\mu_L)\neq H^0(L^{\otimes 2}(-x_1-x_2))$, for any $x_1,x_2$ (not necessarily distinct). It thus suffices to show that there exists a codimension-1 subspace of $H^0(L^{\otimes 2})$ containing $\im(\mu_L)$ but not of the form $H^0(L^{\otimes 2}(-P))$. And indeed, the closed immersion $C\to \PP(H^0(L^{\otimes 2})^{\vee})$ identifies $P\in C$ with $H^0(L^{\otimes 2}(-P))$ (further identified with a line in the dual vector space), while the locus of codimension-1 sub-spaces of $H^0(L^{\otimes 2})$ containing $\im(\mu_L)$ is a line in $\PP(H^0(L^{\otimes 2})^{\vee})$. Since a positive genus curve cannot contain a line, there is some codimension-1 subspace of $H^0(L^{\otimes 2})$ containing $\im(\mu_L)$ but not of the form $H^0(L^{\otimes 2}(-P))$; the claim follows, and we conclude.

\medskip
\noindent $\mathbf{g=6,k=5}.$ In this case, $d^*_5=6$, and we conclude by applying Corollary \ref{cor:tri}(1) to a general $\g^2_6$ on a general curve of genus 6.

\medskip
\noindent $\mathbf{g=7,k=5}.$ In this case, $d^*_5=7$, and we conclude by applying Corollary \ref{cor:tri}(1) to a general $\g^2_7$ on a general curve of genus 7. 

\medskip
\noindent $\mathbf{g=8,k=6}.$ In this case, $d^*_6=8$, and we conclude by applying Corollary \ref{cor:tri}(1) to a general $\g^3_9$ on a general curve of genus 8. This is essentially the example produced by Mukai in \cite{MCGrass}.

\medskip
\noindent $\mathbf{g=9,k=6}.$ In this case, $d^*_6=8$, and the smallest $d\ge 9$ for which some pair $(r,d)$ is BN-compatible is $d=10$ with $r=3$. Similar to the $g=5$ case, we consider the bundle $E$ obtained by a general extension of some $\g^3_{10}$ on a general genus-9 curve by its Serre dual. To conclude that $E$ is stable, it suffices to produce a $\g^3_{10}$ that admits no sub-$\g^2_8$. A well-known theorem of Farkas on inclusion of linear series with base points implies that such $\g^3_{10}$'s exist; see \cite[Thm 0.1]{FSec}. 

\medskip
\noindent $\mathbf{g=10,k=6}.$ Much as in the $g=9$ case, here we apply \cite[Thm 0.1]{FSec} to conclude the existence of a general $\g^3_{11}$ on a genus 10 curve that admits no sub-$\g^2_9$, and then proceed with the same argument as before.

\medskip
\noindent $\mathbf{g=11,k=7}.$ In this case, $d^*_7=13$, and we conclude by applying Corollary \ref{cor:tri}(1) to a general $\g^4_{13}$ on a general curve of genus 11.

\medskip
\noindent $\mathbf{g=12,k=7}.$ In this case, $d^*_7=12$ and we conclude by applying Corollary \ref{cor:tri}(1) to a general $\g^3_{12}$ on a general curve of genus 12 curve.

\subsection{New non-emptiness certificates for BFM loci}
We begin by giving a list of previously unknown cases (of the existence portion) of the BFM conjecture that follow directly from our non-emptiness result for special maximal rank loci. The running numerical assumptions are that (i) $g\ge 13$, $k\ge 8$; and (ii) $\binom{k+2}{2}>3g-3\ge\binom{k+1}{2}$.
It is easy to see that for any $k$ considered here, there is always some positive integer $a$ for which $2a+1+g-d^*_k=k$. We deduce the following result.
\begin{thm}\label{thm:solutions}
The moduli space of stable rank two vector bundles with canonical determinant and $k$ sections over a general curve of genus $g$ is non-empty whenever there exists an integer solution $x=a$ to one of the two following systems of inequalities:
\[
\begin{cases}
\begin{aligned}
&    (x-1)^2-k(x-1)+g<0\\
&   7\ge x^2-7x+2(2k-g+2)>0\\
&    x^2+(7-2k)x+4(g-k-1)\ge 0\\
&2x\ge k
\end{aligned}
\end{cases}
\begin{cases}
\begin{aligned}
&    (x-1)^2-k(x-1)+g<0\\
&    x^2-kx+g\ge 0\\
&    x^2-7x+2(2k-g+2)<0\\
&2x\ge k.
\end{aligned}
\end{cases}
\]
\end{thm}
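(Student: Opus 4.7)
The plan is to deduce Theorem~\ref{thm:solutions} directly from Corollary~\ref{cor:tri}. Given an integer $x$ satisfying one of the two systems, I would set $r = x-1$ and $d = 2r+1+g-k$; the relation $2a+1+g-d^*_k = k$ from the paragraph preceding the theorem identifies this $d$ with $d^*_k$, so that $(r, d^*_k)$ is the candidate pair to feed into Corollary~\ref{cor:tri}.

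Next I would set up the algebraic dictionary
\[
\rho(g,r,d) = x^2 - kx + g, \qquad 2N(g,r,d) = x^2 - 7x + 2(2k-g+2), \qquad 2(\rho - N) = x^2 + (7-2k)x + 4(g-k-1),
\]
all obtained by substituting $r+g-d = k-r-1$ into the definitions of $\rho$ and $N = \binom{r+2}{2} - 2d + g$, with the third identity equal to $2D(g,r,d)$ in the surjective range. Under this dictionary, (1b) and (2c) distinguish the surjective ($N \ge 1$) from the injective ($N \le -1$) range, (2b) is BN-compatibility $\rho \ge 0$, (1c) is the SMRC hypothesis $D \ge 0$, and (1d)/(2d) is the weak degree bound $d \ge g-1$.

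The delicate step is to upgrade the degree bound from $d \ge g-1$ to the stability-critical $d \ge g$ using the strict inequality (1a)/(2a). Written as $r(k-r) > g$ and combined with BN $(r+1)(k-r-1) \le g$, the elementary identity $r(k-r) - (r+1)(k-r-1) = 2r - k + 1$ forces $2r \ge k$, and hence $d \ge g+1$. (BN is given directly by (2b) in System~2, and follows from $\rho \ge N \ge 1 > 0$ via (1b) and (1c) in System~1.) I expect this to be the main conceptual obstacle: identifying (1a)/(2a) as precisely the inequality that, together with BN-compatibility, yields the upgrade $d \ge g-1 \Rightarrow d \ge g$ needed by Corollary~\ref{cor:tri} via Proposition~\ref{prop:basic_case}.

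Finally, in the System~1 case I would invoke Proposition~\ref{prop:positivity_for_small_N}: the bound $7 \ge 2N > 0$, together with the fact that $2N$ is always an even integer, forces $N \in \{1,2,3\}$, so the proposition yields $q_\ast(S) > 0$ and hence $\mathcal{M}^r_d(C) \neq \emptyset$; the exceptional triples listed there all have $g \le 7$ and so are ruled out by our standing assumption $g \ge 13$. Corollary~\ref{cor:tri}(2) then concludes the System~1 case, while Corollary~\ref{cor:tri}(1) concludes System~2 with no further work.
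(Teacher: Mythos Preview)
Your algebraic dictionary is correct, and the proof is close to the paper's, but there is a real gap: you never establish that $d = d^*_k$.

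Corollary~\ref{cor:tri} (via Proposition~\ref{prop:basic_case}) requires not merely $d \ge g$ but that $d$ be the \emph{minimal} BN-compatible degree $d^*_k$; minimality is what forces any destabilizing subbundle $F \hookrightarrow E$ with $h^0(F)+h^1(F) \ge k$ to have $\deg F < g-1$. Your appeal to the sentence preceding the theorem is circular: that sentence says that \emph{given} $d^*_k$ there is an integer $a$ with $2a+1+g-d^*_k = k$; it does not say that every integer $a$ solving the displayed system has $2a-1+g-k = d^*_k$. In the paper's proof, inequality (1a)/(2a) is used precisely for this: rewritten as $\rho(g,a-2,2a-3+g-k) < 0$, it rules out the next-smaller candidate $(r-1,d-2)$, and together with $\rho(g,r,d) \ge 0$ this pins down $d^*_k = d$. (One then checks, as you implicitly would need to, that intermediate degrees $d-1, d-3, \dots$ are also excluded; this follows from the same two inequalities by a short convexity argument on $(r'+1)(k-r'-1)$.)

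Your use of (1a)/(2a) to deduce $d \ge g+1$ from $r(k-r) > g \ge (r+1)(k-r-1)$ is a nice observation and in fact supplies something the paper's proof leaves implicit (the hypothesis $d^*_k \ge g$ of Corollary~\ref{cor:tri}). But it is complementary to, not a substitute for, the minimality argument. You need both: interpret (1a)/(2a) as $\rho(g,r-1,d-2) < 0$ to get $d = d^*_k$, and then your identity $r(k-r) - (r+1)(k-r-1) = 2r-k+1$ to get $d^*_k \ge g$.
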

\begin{proof}
In the system on the left, the first inequality implies that $\rho(g,a-2,2a-3+g-k)<0$; the second inequality implies that $(a-1,2a-1+g-k)$ falls within the surjective range; and the third inequality implies that   
\[\rho(g,a-1,2a-1+g-k)\ge D(g,a-1,2a-1+g-k)\ge 0.\]
In particular, $d^*_k=2a-1+g-k$. Similarly, in the system on the right, we have $d^*_k=2a-1+g-k$ and $(a-1,2a-1+g-k)$ falls within the injective range.

It then follows from Corollary~\ref{cor:tri} and Proposition \ref{prop:positivity_for_small_N} that the corresponding moduli spaces of rank two vector bundles are non-empty.
\end{proof}
Using Theorem \ref{thm:solutions}, we obtain some sharp existence results for the BFM conjecture, some of which were previously unknown.
\begin{cor}\label{cor:BFM_sharp}
The existence portion of the BFM conjecture holds for $g \in \{14,17,18,19,22,26,31\}$. Moreover, there exists a vector bundle verifying the existence portion of the conjecture with a minimal quotient line bundle of degree $d^*_k$ as defined in Definition~\ref{defn:min_deg}. 
\end{cor}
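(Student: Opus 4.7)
The plan is to verify the hypotheses of Corollary \ref{cor:tri} for each of the seven listed genera, thereby reducing the existence portion of the BFM conjecture in each case to a finite numerical check. First, for each $g$ on the list, I would determine the maximal integer $k = k(g)$ with $\rho_{g,k} \geq 0$, and then compute the minimal BN-compatible degree $d^*_k$ of Definition \ref{defn:min_deg} by a finite search over admissible $(r,d)$. This yields a unique BN-compatible triple $(g,r,d^*_k)$ in each of the seven instances, with $d^*_k \geq g$ and $2r+1+g-d^*_k = k$ in all cases, which is the precise numerical input needed for Corollary \ref{cor:tri}.

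Second, I would sort the seven triples by their value of $N = \binom{r+2}{2} - (2d^*_k - g)$. Those landing strictly in the surjective range ($N \geq 1$) would be handled by invoking Proposition \ref{prop:positivity_for_small_N} to conclude that the SMRC locus $\mathcal{M}^r_{d^*_k}(C)$ is non-empty, and hence by Corollary \ref{cor:tri}(2); one checks in passing that none of these triples appears in the short exceptional list $\{(1,2,3),(5,3,7),(2,3,5),(7,4,10)\}$. Those landing in the injective range ($N < 0$) would be handled via Corollary \ref{cor:tri}(1), which relies on Larson's Maximal Rank theorem. In either case, Proposition \ref{prop:basic_case} then assembles a stable extension $E$ of $L$ by its Serre dual with $h^0(E) = k$ and $L$ as a minimal quotient line bundle of degree $d^*_k$.

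The only real subtlety is the boundary case $(g,k) = (22,10)$, where the triple turns out to be $(r,d^*_k) = (6,25)$ and $N = 0$: strictly speaking this neither meets the definition of the surjective range nor of the injective range as given earlier in the paper. Here I would observe that $\binom{r+2}{2} = 28 < 29 = \dim H^0(L^{\otimes 2})$ automatically forces the cokernel of $\mu_L$ to be nonzero, which supplies directly a nonzero element $e \in \ker(\mu_L^{\vee})$ and hence a nontrivial extension with $u_e = 0$; the subsequent stability and minimality arguments of Proposition \ref{prop:basic_case} then apply verbatim.

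The main obstacle is not conceptual but bookkeeping: one must carry out the finite search for $d^*_k$ accurately in each instance, verify the BN-compatibility constraints and the lower bound $d^*_k \geq g$ required by Proposition \ref{prop:basic_case}, and confirm that each surjective triple avoids the exceptional list of Proposition \ref{prop:positivity_for_small_N}. The $N = 0$ boundary case at $g = 22$ is the only one requiring a small argument outside the direct invocation of Corollary \ref{cor:tri}, and no new theoretical input beyond the apparatus developed in this section is needed.
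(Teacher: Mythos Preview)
Your approach is correct and is essentially the paper's: Corollary~\ref{cor:BFM_sharp} follows from Theorem~\ref{thm:solutions} (equivalently, from Corollary~\ref{cor:tri} together with Proposition~\ref{prop:positivity_for_small_N}) by a finite numerical verification for each of the seven genera.

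One small correction concerning your treatment of $g=22$. You correctly compute $(r,d^*_k)=(6,25)$ and $N=0$, but you are mistaken that this case falls outside both ranges. The paper's injective range is defined by the \emph{strict} inequality $\binom{r+2}{2}<2d-g+1$; since $N=0$ means $\binom{r+2}{2}=2d-g<2d-g+1$, the triple $(22,6,25)$ lies squarely in the injective range, and Corollary~\ref{cor:tri}(1) applies without further argument. Your ad hoc observation that $\dim\operatorname{Sym}^2 H^0(L)=28<29=h^0(L^{\otimes2})$ forces $\operatorname{coker}(\mu_L)\neq 0$ is of course correct, but it is exactly the content of Corollary~\ref{cor:tri}(1) in this case, so no separate treatment is needed. (You may have been misled by the strict inequality $x^2-7x+2(2k-g+2)<0$ in the right-hand system of Theorem~\ref{thm:solutions}, which as literally written excludes $2N=0$; but the underlying Corollary~\ref{cor:tri}(1) does not.)
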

\begin{rem} Those cases in which $g \in\{19, 26, 31\}$ cases were previously settled in \cite{LNP}. In fact, by combining Corollary \ref{cor:BFM_sharp} with results in \cite{Ber}, \cite{Zhang1} and \cite{LNP}, we conclude that whenever $g\le 31$, the existence portion of the BFM conjecture holds except when $g \in \{13,16,20,21,27,28\}$. 
\end{rem}

We now turn our attention to the case $g=13, k=8$, which is of minimal genus among the remaining open cases of the BFM conjecture.
While we do not manage to definitively settle this case, we uncover some interesting geometry in the process. 
\subsection{The case of $g=13$, $k=8$}
For the BFM conjecture to hold in this case, we must produce an extension $e$ of the form 
\[0\to \omega_C\ot L^{-1}\to E\to L\to 0\] 
such that $E$ is stable and $h^0(E)\ge 8$ over a Brill--Noether--Petri general curve. By the theorem of Nagata, it suffices to search within the degree range $[13,18]$. Imposing $h^0(L)+h^1(L)\ge 8$ reduces the possibilities to $d=16$ and $d=18$. 

\medskip
When $d=16$, the Brill--Noether theorem forces $h^0(L)=6$ and $h^1(L)=2$. In this case, any stable $E$ is associated with a nontrivial extension $e$ belonging to the kernel of $H^0(L^2)^{\vee}\to H^0(L)^{\vee}\ot H^0(L)^{\vee}$. In particular, if $E$ is stable, the multiplication map $\mu_L$ cannot be surjective. However, in this case 
\[\rho-1-\bigg|\binom{r+2}{2}-(2d+1-g)\bigg|=1-1-|21-20|<0\]
so the SMRC predicts that $\mu_L$ is {\it always} surjective. Note that the image of any $g^5_{16}$ whose quadratic multiplication map fails to be surjective lies on a Fano threefold $X$ of type $(2,2)$. Explicitly, $\mathbb{P}^3$ is obtained from the blow-up of $X$ along a line $\ell \in X$ via a morphism that contracts the proper transforms of those lines on $X$ that intersect $\ell$. Deciding whether the SMRC holds in this case amounts to a statement about how linear series $g^5_{16}$ transform under the birational isomorphism $X \dashrightarrow \mathbb{P}^3$; we intend to pursue this line further in a subsequent paper. 

\medskip
When $d=18$, the Brill-Noether theorem forces $h^0(L)=7$ and $h^1(L)=1$. By the same argument as in the case $d=16$, if a stable bundle $E$ with sufficiently many sections exists, the corresponding multiplication map $\mu_L$ cannot be surjective. However, we expect the SMRC locus to have dimension
\[\rho-1-\bigg(\binom{r+2}{2}-(2d+1-g)\bigg)=6-1-(28-24)=1\]
so the BFM and SMRC conjectures are compatible in this case. 

Notice that in this case the Brill--Noether theorem also implies that (the complete linear series determined by) any destabilizing subbundle of $E$ must be a $\g^5_{16}$. 
On the other hand, from Proposition~\ref{prop:sec} it follows that no $\g^6_{18}$ giving rise to a stable vector bundle $E$ contains a $\g^5_{16}$. Meanwhile, it follows from Proposition \ref{prop:positivity_for_small_N} that Conjecture~\ref{conj:smrc} (SMRC) holds for $(g,r,d)=(13,6,18)$.
We deduce the following.
\begin{claim}\label{BFM_g=13,k=8}
Assume that Conjecture~\ref{conj:smrc} (SMRC) holds for $(g,r,d)=(13,5,16)$. Then the existence portion of the BFM conjecture holds for $(g,k)=(13,8)$ on a general curve if and only if there exists some $\g^6_{18}$ in $\cM^6_{18}(C)$ which is very ample.
\end{claim}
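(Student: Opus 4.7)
The plan is to establish the biconditional by analyzing stable rank-two bundles with canonical determinant as extensions $0 \to \omega_C \otimes L^{-1} \to E \to L \to 0$, leveraging Proposition~\ref{prop:sec} together with the assumed SMRC at $(g,r,d)=(13,5,16)$. The reductive observation underlying everything, already implicit in the discussion preceding the claim, is that for a complete $g^6_{18}$ on a Brill--Noether general curve of genus $13$, failure of very ampleness is equivalent to the existence of an effective divisor $Z$ of degree $2$ with $h^0(L(-Z))=6$, i.e.\ to $L$ containing a complete $g^5_{16}$ as a sub-series.

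For the direction $(\Leftarrow)$, I would start with a very ample $L \in \cM^6_{18}(C)$ and pick a nonzero $e \in \ker(\mu_L^\vee)$; the resulting $E$ satisfies $h^0(E) = h^1(L) + h^0(L) = 8$. Stability is then verified by contradiction: any saturated destabilizing sub-line bundle $F \hookrightarrow E$ either maps to zero in $L$ (yielding $\deg F \le \deg(\omega_C \otimes L^{-1}) = 6 < 12$, impossible) or gives $F = L(-Z)$ for some effective $Z$ with $1 \le \deg Z \le 6$. Proposition~\ref{prop:sec}(1) then forces $\deg Z \ge 2(h^0(L) - h^0(L(-Z)))$, and I would derive a contradiction case by case. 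Very ampleness of $L$ handles $\deg Z \in \{1,2\}$ via $h^0(L(-Z)) = h^0(L) - \deg Z$. For $\deg Z \in \{3,4,5,6\}$, Brill--Noether genericity rules out the existence on $C$ of $g^5_{15}$, $g^4_{14}$, $g^4_{13}$ and $g^3_{12}$ respectively, and each such non-existence yields the strict inequality $\deg Z < 2(7-h^0(L(-Z)))$ needed to contradict Proposition~\ref{prop:sec}(1).

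For the direction $(\Rightarrow)$, given a stable $E$ with $\det E = \omega_C$ and $h^0(E) \ge 8$, the Nagata bound (Corollary~\ref{cor:min_quot}) places the minimal quotient $L$ in degrees $[13, 18]$. Combining this with the inequality $h^0(E) \le h^0(L) + h^1(L)$ and the Brill--Noether numerics immediately forces $\deg L \in \{16, 18\}$, in either case with $h^0(L) + h^1(L) = 8$. Equality then forces $u_e = 0$ and $\mu_L$ non-surjective. The assumed SMRC at $(13,5,16)$, combined with $D(13,5,16) = -1 < 0$, eliminates $\deg L = 16$, so $L \in \cM^6_{18}(C)$. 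To confirm $L$ is very ample, I would argue by contradiction: if not, then $L(-Z)$ is a complete $g^5_{16}$ for some effective $Z$ of degree $2$, whence $\mu_{L(-Z)}$ is surjective by SMRC; Proposition~\ref{prop:sec}(2) (applicable since $a = 5$ and $\deg Z = 2 \le a/2$) then destroys the stability of $E$.

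The hardest part is really the bookkeeping in the $(\Leftarrow)$ stability argument, in particular ensuring that the inequality in Proposition~\ref{prop:sec}(1) is strict in every destabilizing subcase $\deg Z \in \{1,\dots,6\}$; the remaining steps are essentially dictated by the Brill--Noether numerics on a general genus $13$ curve and by the precise interaction between $d^*_k$, the integer $a = \deg L - g$ appearing in Proposition~\ref{prop:sec}, and the value $D(g,r,d)$ controlling the SMRC dichotomy.
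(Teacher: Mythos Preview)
Your argument is correct and follows the same strategy as the paper's surrounding discussion: reduce the minimal quotient to degrees $16$ or $18$ via Nagata plus Brill--Noether, eliminate degree $16$ by the assumed SMRC, and for degree $18$ identify very ampleness with the absence of a sub-$g^5_{16}$, which is precisely the obstruction to stability. The only cosmetic difference is that in the $(\Leftarrow)$ direction you run the case analysis $\deg Z\in\{1,\dots,6\}$ through Proposition~\ref{prop:sec}(1), whereas the paper invokes the equivalent inequality $h^0(F)+h^1(F)\ge h^0(E)=8$ directly to conclude that any destabilizing subbundle must be a $g^5_{16}$; these are the same computation.
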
   
By \cite[Thm 0.1]{FSec}, the locus of $\g^6_{18}$ containing a sub-$\g^5_{16}$ is at most 2-dimensional.\footnote{Indeed, the incidence variety $I\subset G^1_8(C)\times \sy^6C$ of degree 6 effective divisors $E$ contained in a $\g^1_8$ is 2-dimensional, and the image of the map $E\mapsto |K_C-E|$ is a 2-dimensional locus inside $G^6_{18}(C)$ consisting of $\g^6_{18}$ containing a $\g^5_{16}$.} So if we can show that the locus in $G^6_{18}(C)$ where the multiplication map $v_2$  fails to have maximal rank is non-empty and is not contained inside the locus of $\g^6_{18}$ admitting a sub-$\g^5_{16}$, we will verify the existence of a rank two linear series with $8$ sections and canonical determinant. 


\bibliographystyle{amsalpha}
\bibliography{myrefs}

\providecommand{\bysame}{\leavevmode\hbox to3em{\hrulefill}\thinspace}
\providecommand{\MR}{\relax\ifhmode\unskip\space\fi MR }
\providecommand{\MRhref}[2]{%
  \href{http://www.ams.org/mathscinet-getitem?mr=#1}{#2}
}
\providecommand{\href}[2]{#2}
\begin{thebibliography}{LOTiBZ18}

\bibitem[ACGH85]{ACGH}
E.~Arbarello, M.~Cornalba, P.A. Griffiths, and J.~Harris, \emph{Geometry of
  algebraic curves, volume 1}, Springer, 1985.

\bibitem[AF11]{FSMRC}
M.~Aprodu and G.~Farkas, \emph{Koszul cohomology and applications to moduli},
  Grassmannians, {M}oduli {S}paces and {V}ector {B}undles \textbf{14} (2011),
  25--50.

\bibitem[BF98]{Ber}
A.~Bertram and B.~Feinberg, \emph{On stable rank two bundles with canonical
  determinant and many sections}, Algebraic Geometry, Papers for Europroj
  Conferences in Catania and Barcelona (1998), 259--269.

\bibitem[CF15]{ext2}
C.~Ciliberto and F.~Flamini, \emph{Extensions of line bundles and
  {B}rill-{N}oether loci of rank-two vector bundles on a general curve}, Rev.
  Roumaine Math. Pures Appl. \textbf{60} (2015), 201--255.

\bibitem[Far08]{FSec}
G.~Farkas, \emph{Higher ramification and varieties of secant divisors on the
  generic curve}, J. Lond. Math. Soc. \textbf{78(2)} (2008), 418--440.

\bibitem[Far09]{FKos}
\bysame, \emph{{K}oszul divisors on moduli spaces of curves}, Amer. J. Math.
  \textbf{131, No. 3} (2009), 819--867.

\bibitem[Far18]{FHurw}
\bysame, \emph{Effective divisors on {H}urwitz spaces and moduli of curves},
  arXiv:1804.01898 (2018).

\bibitem[FH13]{fulton2013representation}
W.~Fulton and J.~Harris, \emph{Representation theory: a first course}, vol.
  129, Springer Science \& Business Media, 2013.

\bibitem[FO11]{FO2}
G.~Farkas and A.~Ortega, \emph{The {M}aximal {R}ank {C}onjecture and rank two
  {B}rill-{N}oether theory}, Pure Appl. Math. Q. \textbf{7} (2011), 1265--1296.

\bibitem[FO12]{FO}
\bysame, \emph{Higher rank {B}rill-{N}oether theory on sections of {K}3
  surfaces}, Int. J. Math. \textbf{23} (2012).

\bibitem[FP06]{fulton2006schubert}
W.~Fulton and P.~Pragacz, \emph{Schubert varieties and degeneracy loci},
  Springer, 2006.

\bibitem[Ful98]{Int}
W.~Fulton, \emph{Intersection theory}, 2nd ed., Springer, 1998.

\bibitem[Gro58]{gro_chern}
A.~Grothendieck, \emph{Sur quelques propri\'{e}t\'{e}s fondamentales en
  th\'{e}orie des intersections}, S\'{e}minaire Claude Chevalley \textbf{3}
  (1958), no.~4, 1--36.

\bibitem[How96]{howard}
F.T. Howard, \emph{Sums of powers of integers via generating functions,},
  {F}ibonacci {Q}uarterly \textbf{34} (1996), 244--256.

\bibitem[JP18]{JP}
D.~Jensen and S.~Payne, \emph{On the {S}trong {M}aximal {R}ank {C}onjecture in
  genus 22 and 23}, arXiv:1808.01285v2 (2018).

\bibitem[Knu93]{knuth1993johann}
D.~E. Knuth, \emph{Johann {F}aulhaber and sums of powers}, Math. Comp.
  \textbf{61} (1993), no.~203, 277--294.

\bibitem[Lar17]{larson2017maximal}
E.~Larson, \emph{The maximal rank conjecture}, arXiv:1711.04906 (2017).

\bibitem[Las88]{lascoux1988interpolation}
A.~Lascoux, \emph{Interpolation de {L}agrange}, On {O}rthogonal {P}olynomials
  and their {A}pplications \textbf{1} (1988), 95--101.

\bibitem[LLT89]{laksov1989giambelli}
D.~Laksov, A.~Lascoux, and A.~Thorup, \emph{On {G}iambelli's theorem on
  complete correlations}, Acta Math. \textbf{162} (1989), no.~1, 143--199.

\bibitem[LN83]{msub}
H.~Lange and M.~S. Narasimhan, \emph{Maximal subbundles of rank two vector
  bundles on curves}, Math. Ann. \textbf{266} (1983), 55--72.

\bibitem[LNP16]{LNP}
H.~Lange, P.~Newstead, and S.~Park, \emph{Non-emptiness of {B}rill-{N}oether
  loci in ${M}(2,k)$}, Comm. Algebra \textbf{44(2)} (2016), 746--767.

\bibitem[LOTiBZ18]{LOTZ18}
F.~Liu, B.~Osserman, M.~Teixidor~i Bigas, and N.~Zhang, \emph{The {S}trong
  {M}aximal {R}ank {C}onjecture and moduli spaces of curves},
  arXiv:1808.01290v3 (2018).

\bibitem[Mac98]{macdonald_symm}
I.~G. Macdonald, \emph{Symmetric functions and {H}all polynomials}, Oxford
  university press, 1998.

\bibitem[Mat65]{Mattuck3}
A.~P. Mattuck, \emph{Secant bundles on symmetric products}, Amer. J. Math.
  \textbf{87} (1965), 779--797.

\bibitem[Muk93]{MCGrass}
S.~Mukai, \emph{Curves and {G}rassmannians}, Algebraic Geometry and Related
  Topics (1993), 19--40.

\bibitem[Muk95]{MCan}
\bysame, \emph{Vector bundles and {B}rill-{N}oether theory}, Current Topics in
  Complex Algebraic Geometry \textbf{MSRI Publications, vol. 28} (1995),
  145--158.

\bibitem[Mum10]{mumford2010varieties}
D.~Mumford, \emph{Varieties defined by quadratic equations}, Questions on
  {A}lgebraic {V}arieties, Springer, 2010, pp.~29--100.

\bibitem[Nag70]{Nag}
M.~Nagata, \emph{On self-intersection number of a section on a ruled surface},
  Nagoya Mathematical Journal \textbf{37} (1970), 191--196.

\bibitem[OO97]{okounkov1997shifted}
A.~Okounkov and G.~Olshanski, \emph{Shifted {S}chur functions}, Algebra i
  Analiz \textbf{9} (1997), no.~2, 73--146.

\bibitem[Pra88]{pragacz1988enumerative}
P.~Pragacz, \emph{Enumerative geometry of degeneracy loci}, Ann. Sci. \'Ec.
  Norm. Sup\'er., vol.~21, Elsevier, 1988, pp.~413--454.

\bibitem[SF99]{stanley_fomin_1999}
R.~P. Stanley and S.~Fomin, \emph{Enumerative combinatorics, volume 2},
  Cambridge Studies in Advanced Mathematics, Cambridge university press, 1999.

\bibitem[TiB03]{T03}
M.~Teixidor~i Bigas, \emph{Injectivity of the symmetric map for line bundles},
  manuscripta mathematica \textbf{112} (2003), 511--517.

\bibitem[TiB04]{M1}
\bysame, \emph{Rank two vector bundles with canonical determinant}, Math.
  Nachr. \textbf{265} (2004), 100--106.

\bibitem[TiB08]{M2}
\bysame, \emph{Petri map for rank two vector bundles with canonical
  determinant}, Compos. Math. \textbf{144, no.3} (2008), 705--720.

\bibitem[Tu17]{tu2017computing}
L.~W. Tu, \emph{Computing the {G}ysin map using fixed points}, Algebraic
  geometry and number theory, Springer, 2017, pp.~135--160.

\bibitem[Zha16]{Zhang1}
N.~Zhang, \emph{Towards the {B}ertram-{F}einberg-{M}ukai conjecture}, J. Pure
  Appl. Algebra \textbf{220} (2016), 1588--1654.

\end{thebibliography}

\end{document}